\newtheorem{theorem}{Theorem}[section]
\newtheorem{lemma}[theorem]{Lemma}
\newtheorem{corollary}[theorem]{Corollary}
\newtheorem{proposition}[theorem]{Proposition}
\theoremstyle{definition}
\newtheorem{definition}[theorem]{Definition}
\theoremstyle{remark}
\newtheorem{remark}[theorem]{Remark}
\newtheorem*{remark*}{Remark}
\newtheorem*{claim}{Claim}
\newcommand{\mc}[1]{\mathcal{#1}}
\newcommand{\bb}[1]{\mathbb{#1}}
\newcommand{\mf}[1]{\mathfrak{#1}}
\newcommand{\g}{\Gamma}
\newcommand{\hol}{\text{Hol}}
\renewcommand{\hom}{\text{Hom}}
\newcommand{\crit}[1]{\text{Crit}{#1}}
\newcommand{\en}{\text{End}}
\newcommand{\coker}{\text{coker}}
\newcommand{\T}{\text}
\newcommand{\E}{\mathcal{E}}
\newcommand{\I}{\text{Ind}}
\newcommand{\e}{\text{Edge}}
\renewcommand{\v}{\text{Vert}}
\renewcommand{\t}[1]{\text{#1}}
\newcommand{\grad}{\text{grad}}
\newcommand{\lag}{\text{Lag}}
\newcommand{\val}[1]{\text{val}_q(#1)}
\newcommand{\tr}[1]{\text{tr}(#1)}
\newcommand{\C}{\mathbb{C}}
\newcommand{\R}{\mathbb{R}}
\newcommand{\m}{\mathcal{M}}
\newcommand{\M}{\overline{\mathcal{M}}}
\newcommand{\vect}{\text{Vect}}
\newcommand{\ham}{\text{Ham}}
\newcommand{\map}{\text{Map}}
\newcommand{%
\import{./figures/}{.pdf_tex}}[1]{%
\import{./figures/}{#1.pdf_tex}}
\newcommand{\uleq}{\mathrel{
\rotatebox[origin=c]{-90}{$\geq$}}}
\newcommand{\ulleq}{\mathrel{
\rotatebox[origin=c]{-30}{$\geq$}}}
\author{Douglas Schultz}
\address{Institut f\"ur Mathematik, Humboldt-Universit\"at zu Berlin \\
Unter den Linden 6, 10099 Berlin, Germany}
\email{\href{mailto:}{sultzdou@math.hu-berlin.de}}
\subjclass[2010]{53D40}
\title{A Leray-Serre spectral sequence for Lagrangian Floer Theory}
\begin{document}

\begin{abstract}
We consider symplectic fibrations as in Guillemin-Lerman-Sternberg \cite{guilleminsternberggc}, and derive a spectral sequence to compute the Floer cohomology of certain fibered Lagrangians sitting inside a compact symplectic fibration with small monotone fibers and a rational base. We show if the Floer cohomology with field coefficients of the fiber Lagrangian vanishes, then the Floer cohomology with field coefficients of the total Lagrangian also vanishes. We give an application to certain non-torus fibers of the Gelfand-Cetlin system in Flag manifolds, and show that their Floer cohomology vanishes.
\end{abstract}
\maketitle
\tableofcontents
\section{Introduction}

It has long been assumed that the collection of Lagrangians in a symplectic manifold holds a key to understanding the manifold itself. Such an assumption leads one to define invariants involving Lagrangians and pseudo holomorphic curves, known loosely as Lagrangian Floer theory. The chains are typically intersection points of Lagrangians and the differential $d$ counts maps from surfaces that satisfy a partial differential equation known as $\bar{\partial}=0$ with certain boundary conditions. A fundamental problem lies in the definition of the differential, as one must almost always perturb the equation in some way to get $d^2=0$. Such a confounding factor contributes to the depth of the subject and inspires new mathematical tools to be developed.

We consider the problem of defining the Floer Cohomology of a single Lagrangian in the setting of a symplectic fiber bundle. The inspiration comes from the original \emph{Leray-Serre} spectral sequence associated to a fibration of topological manifolds \cite{leray1,leray2}, and from previous attempts at such a machinery in the different variants of Floer theory \cite{amorim, khanevskybiran,oanceabourgeois,oancea1,oancea2,perutzgysin,sodoge}. One could hope to simplify the computation of Lagrangian Floer invariants and possibly write down a recipe for the Fukaya category of certain symplectic fibrations.

In this paper, we first tackle the transversality and compactness problems that obstruct the definition of Lagrangian Floer cohomology. After this, we formulate a spectral sequence that computes the Lagrangian Floer homology of a \emph{fibered Lagrangian} in a \emph{symplectic fibration}. 

Let $$(F,\omega_F)\rightarrow (E,\omega)\xrightarrow{\pi} (B,\omega_B)$$ be a fiber bundle of compact connected smooth manifolds, each of which come equipped with their own symplectic structure. Assume further that the transition maps of the fiber bundle are symplectomorphisms of $(F,\omega_F)$. From Guillemin-Lerman-Sternberg \cite{guillemin, ms1} one assigns to any connection $H$ with Hamiltonian holonomy a natural symplectic form called the \emph{weak coupling form}
\begin{gather}
\label{wcfintro1}\omega_{H,K}:=a_H+K\pi^*\omega_B,\\
\label{wcfintro2}da_H=0,\\
\label{wcfintro3}(F_p,a_H\vert_{F_p})\cong (F,\omega_F),
\end{gather}
and such that the following two-form $B$ vanishes: $$v_1\wedge v_2\mapsto\int_F \iota_{v_1^\sharp\wedge v_2^\sharp}a_H^{\frac{\dim F+2}{2}}=0.$$ The final condition is a normalizing condition and guarantees a uniqueness statement \eqref{coupling}. Here, $\sharp$ is the operation that lifts a vector in $TB$ to vector in $TE$ that is horizontal with respect to the connection. By construction we have that $H:=(TF)^{a_H\perp}$, and $a_H$ is known as the \emph{minimal coupling form} of this connection.

We imagine that there is a Lagrangian $L\subset E$ such that $L$ fibers as 
$$L_F\rightarrow L\xrightarrow{\pi} L_B$$
with $L_F$ and $L_B$ Lagrangian submanifolds of $F$ and $B$ respectively. 

To ensure that the Floer theory works, we need to restrict the class of symplectic manifolds and Lagrangians that we work with. A symplectic manifold $F$ is called \emph{monotone} if there is a $\zeta\in \bb{R}_{>0}$ so that
$$\int u^*\omega=2\zeta\cdot c_1([u])$$
for all sphere classes $[u]\in H^{sphere}_2(F,\bb{Z})$. The \emph{minimal Maslov index} for a Lagrangian $L_F\subset F$ is the minimal Maslov index over all $J$-holomorphic disks with boundary in $L_F$ for any tamed $J$.

Similarly, a symplectic manifold $B$ is called \emph{rational} if $$[\omega_B]\in H^2(B,\bb{Q})$$
and a Lagrangian $L_B$ is called rational if the set of values
$$\Big\lbrace \int_D u^*\omega_B: [u]\in H_2^{disk}(B,L_B,\bb{Z})\Big\rbrace$$
forms a discrete subset of $\bb{R}$. 

\begin{definition}\label{morifibrationdefinition}
A \emph{symplectic Mori fibration} is a fiber bundle of compact symplectic manifolds $(F,\omega_F)\rightarrow(E,\omega_{H,K})\xrightarrow{\pi} (B,\omega_B)$, whose transition maps are symplectomorphisms of the fibers, $(F,\omega_F)$ is monotone, and $(B,\omega_B)$ is rational. We put $\omega_{H,K}=a_H+K\pi^*\omega_B$ for large $K$ with $a_H$ the minimal coupling form associated to a connection $H$ with Hamiltonian holonomy around contractible loops.
\end{definition}

The terminology is inspired by Charest-Woodward and Gonz\'alez-Woodward's \cite{CW2,gonw} minimal model program for symplectic manifolds. They look to write down generators for the Fukaya category of a symplectic manifold by using minimal model transitions from algebraic geometry. Following \cite[Definition 2.1, 2.2]{CW2} one can consider a sequence $X_0,\dots,X_{k+1}$ of smooth projective varieties where $X_i$ is obtained from $X_{i-1}$, $i=1,\dots k$, via a blow-down or birational ``flip", and $X_{k}\rightarrow X_{k+1}$ is a fibration with Fano fiber; a so-called ``Mori fibration".

\begin{definition}\label{fiberedlagrangiandefinition}
A \emph{fibered Lagrangian} in a symplectic Mori fibration is a relatively spin Lagrangian $L\subset (E,\omega_{H,K})$ that fibers as
$$L_F\rightarrow L\xrightarrow{\pi} L_B$$
where $L_F\subset (F,\omega_F)$ is a Lagrangian with minimal Maslov index $2$ and $L_B\subset (B,\omega_B)$ is a rational Lagrangian.
\end{definition}

To construct such a Lagrangian, one typically takes $L_F\subset F_p$ and flows out a submanifold over $L_B$ with respect to the connection $H$, assuming that the holonomy preserves $L_F$. Any fibered Lagrangian is invariant under parallel transport; see Lemma \ref{fiberedlagrangianlemma} for more details on how to construct such an object.

The coefficient ring that we use is a Novikov ring in two variables:
\begin{align*}
\Lambda^2:= \biggl\lbrace \sum_{i,j} c_{ij} q^{\rho_i} r^{\eta_j} \vert &c_{ij}\in \C,\,\rho_i\geq 0, (1-\epsilon)\rho_i+\eta_j\geq 0\\
&\#\lbrace c_{ij}\neq 0, \rho_i+\eta_j\leq N\rbrace <\infty \biggr\rbrace.
\end{align*}
Essentially, the exponent of $r$ cannot be too negative compared to the (non-negative) exponent of $q$, with $\epsilon$ appearing for technical reasons in the proof of Theorem \ref{main theorem}.

Let $b$ be a Morse-Smale function on $L_B$ with a unique index $0$ critical point $x_M$. Here, we refer to the index of a critical point $x$ as the dimension of the stable manifold of $x$ under the negative gradient flow. One can form a pseudo gradient $X_b\in \vect (B,TB)$, whose flow characteristics are equivalent to that of $-\grad(b)$ with respect to some metric. At critical points $x_i$ we have the critical fibers $F_i$. Choose a Morse-Smale function $g_i$ on each critical fiber $L_{F_i}$ and an associated pseudo gradient $X_{g_i}\in ( L_{F_i},TL_{F_i})$ and extend to a vector field $X_g\in \vect (E,TF)$. Form the \emph{Floer complex}
\begin{equation}\label{chaincomplexintro}
CF(L,\Lambda^2)=\bigoplus_{x_i^j\in \crit{X_g\oplus X_b^\sharp}} \Lambda^2 x_i^j
\end{equation}
and ``define" the \emph{Floer ``differential"} $\delta:CF(L,\Lambda^2)\rightarrow CF(L,\Lambda^2)$
\begin{equation}\label{differentialintro}
\delta(x)=
\sum_{y, [u]\in\M_\g(L,x,y,P_\g)_0} (-1)^{\vert y\vert } (\sigma(u)!)^{-1}\\ \mathrm{Hol}_\rho(u)r^{\int_D u^*a_H}q^{\int_D (\pi\circ u)^*\omega_B}\varepsilon(u)y
\end{equation}
where $\M_\g$ represents a moduli space of $P_\g$-holomorphic pearly Morse trajectories based on some labelled tree $\g$,  $\sigma$ represents the number of marked interior points on $u$, $\varepsilon (u)=\pm 1$ depending on orientation of the moduli space, $\vert y\vert$ is a mod-$N$ grading, and $\hol_\rho(u)$ is the evaluation of $[\partial u]$ at a representation
$$\rho:\pi_1(L)\rightarrow (\Lambda^{2})^\times.$$

The fact that the moduli spaces appearing in the definition of $\delta$ are compact of dimension $0$, along with the fact that $\delta^2=0$ (up to deformation by a \emph{weak Maurer-Cartan solution}) is the subject of our first two main theorems: \emph{Transversality} (Theorem \ref{transversality}) and \emph{Compactness} (Theorem \ref{mcompact}). We state these, minus some technical details, in Theorem \ref{main theorem introduction}.

\begin{remark}
The assumption that $L$ is relatively spin and orientable can be removed, and $\bb{Z}/2$ coefficients can be used in the definition of $\Lambda^2$. More generally, one can also use $\bb{Z}/p$ coefficients in $\Lambda^2$.
\end{remark}

Let
\begin{align*}
\Lambda^2_{>0}:= \biggl\lbrace \sum_{i,j} c_{ij} q^{\rho_i} r^{\eta_j} \vert &c_{ij}\in \C,\,\rho_i\geq 0, (1-\epsilon)\rho_i+\eta_j> 0\\
&\#\lbrace c_{ij}\neq 0, \rho_i+\eta_j\leq N\rbrace <\infty \biggr\rbrace
\end{align*}
be the ring of elements with non-zero valuation in either $q$ or $r$.
Since the definition of a symplectic Mori fibration is quite general, we will suppose that there is an element $mc\in CF(L,\Lambda^2_{>0})$ of odd degree that solves the weak Maurer-Cartan equation, where the grading is according to Seidel \cite{seidelgraded} (see section \ref{spectralsequencesection} for the precise definition). Thus
$$\delta^2_{mc}=0$$
and we may define the \emph{Floer cohomology}
$$HF(L,\Lambda^2,mc):=H(CF(L,\Lambda^2,\delta_{mc})).$$

In the sequel to this paper \cite{fiberedpotential}, we provide some sufficient conditions for the existence of $mc$ (namely, when $L_B$ has no holomorphic disks of Maslov index less than 2 and $L$ is ambiently trivially fibered) by computing lifts of the disk potential. It is still unclear as to when one can find a general $mc$ given one for $L_B$, but we give some first order approximations in subsection \ref{secondpagecalcssection}.

In the second half of the paper, we derive a Leray-Serre type spectral sequence to compute $HF(L,\Lambda^2)$ by filtering the Floer chain complex defined in \eqref{chaincomplexintro} and \eqref{differentialintro} by $q$-degree or \emph{base energy}. Notably, if  $F=\lbrace point\rbrace$ we recover the Oh spectral sequence \cite{ohspec} in the rational setting \cite{CW2}. 

To state the three main results modulo details:

\begin{theorem} \label{main theorem introduction}
Let $F\rightarrow E\rightarrow B$ be a symplectic Mori fibration and $L_F\rightarrow L\rightarrow L_B$ a fibered Lagrangian. Then,
\begin{enumerate}
\item (Theorems \ref{transversality} resp.~\ref{mcompact}: Transversality and compactness) There is a coherent choice of perturbation data so that the $\delta$ is well-defined, and
\item (Theorem \ref{main theorem}: Spectral sequence) assuming that we have a solution $mc$ to the weak Maurer-Cartan equation so that $\delta_{mc}^2=0$, there is a spectral sequence $\E^*_s$ that converges to $HF^*(L, \Lambda^2)$. 
\end{enumerate}
\end{theorem}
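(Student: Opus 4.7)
The plan is to split the proof along the two bullet points, treating (1) as the preparatory transversality/compactness package and (2) as a filtration argument on the Floer complex. For (1), I would set up a geometric perturbation scheme on $B$ using a sequence of Donaldson hypersurfaces, available because $(B,\omega_B)$ is rational and $L_B$ is rational in the sense defined above, following the Cieliebak--Mohnke framework adapted in \cite{CW1}. I would then lift perturbation data to $E$ by exploiting the identification noted in the introduction: a disk $v\colon(D,\partial D)\to(B,L_B)$ admits a lift $\tilde u\colon(D,\partial D)\to(E,L)$ exactly when the fiberwise Hamiltonian BVP $\bar\partial u+X_h^{0,1}\circ u=0$, $u(\partial D)\subset L_F$, is solved in $v^*E$. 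Monotonicity of $(F,L_F)$ controls fiber bubbling, rationality of $(B,L_B)$ controls base bubbling, and the coherence conditions must additionally be compatible with forgetting pearls whose projection to $B$ is constant, as promised. The hardest ingredient here is arranging transversality for the fiberwise equation simultaneously with the domain-dependent choices on $B$, which is the content of Theorem \ref{transversality}.

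For (2), once (1) provides $\delta_m^2=0$, I would filter $CF(L,\Lambda^2)$ by the $q$-degree, i.e.\ by the value of $\int_D(\pi\circ u)^*\omega_B$ of a representative disk configuration. Rationality of $(B,L_B)$ guarantees that this filtration is discrete, so it defines an admissible filtration $\mathcal{F}_q^\bullet CF(L,\Lambda^2)$. The differential $\delta_m$ strictly preserves this filtration because every pearly contribution carries a nonnegative power of $q$, and the associated graded picks out only those configurations whose projections to $B$ contribute zero $\omega_B$-area. By the lifting picture of (1), such configurations are precisely pearly Morse trajectories for the fiber Morse function $g_i$ on a single critical fiber $L_{F_i}$, coupled along base Morse flow lines of $b$ with no disk area in $B$. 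This identifies the $E_0$-page differential with the Floer differential on each critical fiber tensored with the $B$-Morse complex, and the $E_1$-page with $HF(L_F,E,\Lambda[r])\otimes gr(\mathcal{F}_q\Lambda^2)$ after recognizing that only trajectories with trivial base component survive the passage to the associated graded.

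The main obstacle, and where I would spend the bulk of the argument, is convergence of the spectral sequence in the Novikov topology. The weighted completeness condition $(1-\epsilon)\rho_i+\eta_j\geq 0$ on $\Lambda^2$ is asymmetric, so I must check that for every truncation level the filtration is both exhaustive and Hausdorff, and that the spectral sequence converges strongly rather than merely $E_\infty$-conditionally. Concretely, I would verify that within any bounded $\rho_i+\eta_j$ window only finitely many filtration degrees contribute (using Gromov compactness together with the energy identity $\omega=a+K\pi^*\omega_B$ for large $K$), so that higher differentials $d_r$ remain continuous for the $\Lambda^2$-topology, and that the comparison map $\bigoplus_q \mathcal{F}_q/\mathcal{F}_{q+1}\to gr\,HF(L,\Lambda^2)$ is a bijection. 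The role of $\epsilon$ in the definition of $\Lambda^2$ is to give just enough slack to absorb the constant $K$ in the weak coupling form without destroying convergence, so I would track $K$-dependence carefully in the energy estimates used to bound each page.
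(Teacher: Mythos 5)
Your overall strategy matches the paper's on both counts. For part (1), the paper indeed pulls back a Donaldson/Cieliebak--Mohnke stabilizing divisor $D_B\subset B$ to $E$, uses a Hamiltonian perturbation of the connection (the $\sigma\in\ham(F,B)$ in Section \ref{perturbationsection}) to deform the horizontal distribution and achieve transversality for vertically constant components, and imposes a $\pi$-stabilization axiom in the coherence conditions so that forgetting pearls with constant projection is compatible with the data. Your outline is consistent with that.

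For part (2), the $q$-degree filtration and the identification of the low-order page are also the paper's route, with one imprecision: the associated-graded differential is not ``the Floer differential on each critical fiber tensored with the $B$-Morse complex.'' It is the single combined differential $\delta_1$ of equation (\ref{familydifferential}), which counts \emph{all} configurations with $e(\pi\circ u)=0$ — fiber disks over critical points, base Morse flows, and continuation maps along those flows, mixed together. The family Floer cohomology $HF(L_F,E,\Lambda_r)$ is by definition the homology of $(CF(L,\Lambda_r),\delta_1)$; recovering a K\"unneth-type tensor decomposition is an additional spectral sequence (the paper's $\widetilde{\E}_r$ filtered by base Morse index) and not automatic. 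Your page indexing also differs from the paper's Leray-Serre-style convention (your $E_1$ is the paper's ``second page''), which is cosmetic but worth flagging.

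The genuine gap is in the convergence argument. You propose checking that the filtration is ``exhaustive and Hausdorff'' and that the higher differentials ``remain continuous for the $\Lambda^2$-topology.'' This is not the right criterion. The paper invokes Weibel's Complete Convergence Theorem, whose hypotheses are that the filtration be exhaustive and \emph{complete} (meaning $CF(L)\cong\varprojlim CF(L)/\mathcal{F}^k_q CF(L)$) and that the spectral sequence be \emph{regular} (meaning $d_s=0$ for $s\gg 0$). Completeness is strictly stronger than Hausdorffness and is a nontrivial check: one must show that the natural map out of the inverse limit lands back inside $\Lambda^2$, which uses the asymmetric constraint $\eta_j\geq-(1-\epsilon)\rho_i$ to force $\rho_i+\eta_j\geq\epsilon\rho_i\to\infty$ and hence the finiteness condition in the Novikov ring. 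Regularity is established separately via Proposition \ref{c}, which uses the inequality $(a+(1-\epsilon)K\pi^*\omega_B)_J\geq 0$ to bound the \emph{base} energy of any contributing configuration by $c'/\epsilon$ in terms of the total energy $c'$; this is what pushes high-filtration-shift differentials to zero. Your intuition that $\epsilon$ is there to ``absorb $K$'' is close in spirit, but the actual mechanism is this two-variable energy estimate together with the completeness computation, not a taming inequality. Without verifying completeness and regularity in this precise form, strong convergence could fail — which is exactly the failure mode the modified ring $\Lambda^2$ is designed to rule out.
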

Moreover we discuss the invariance of such a cohomology theory. 

Next, we formulate a result about the second page $\mc{E}^*_2$. Write
$$\Lambda_r:= \bigg\lbrace \sum_{j} c_{j} r^{\eta_j} \vert c_{j}\in \C,\,\eta_j\geq 0,\#\lbrace c_{j}\neq 0, \eta_j\leq N\rbrace <\infty \biggr\rbrace$$
as the universal Novikov ring, and define the \emph{vertical Floer differential} as 
\begin{equation}\label{vertdifferentialintro}
\delta_1(x)=
\sum_{\substack{y, [u]\in\M_\g(L,x,y,P_\g)_0\\ \int_D (\pi\circ u)^*\omega_B=0}} (-1)^{\vert y\vert } (\sigma(u)!)^{-1}\\ \mathrm{Hol}_{\rho^0}(u)r^{\int_D u^*a}\varepsilon(u)y,
\end{equation}
where $\rho^0\in \hom (\pi_1(L),\Lambda_r^\times)$ is the leading order part of $\rho$. Notably, this counts configurations in $(E,L)$ that project to configurations only containing constant disks. Moreover, let $mc^0\in CF(L,\Lambda_r)$ denote the part of $mc$ obtained by setting $q=0$, and let $mc^0_{M}$ denote its projection onto the subspace generated by critical points lying above $x_M\in \crit (b)$. In subsection \ref{secondpagecalcssection} we show that $\delta_{1,mc^0}^2=0$ and define the \emph{family Floer cohomology} (cf Hutchings \cite{hutchings}) of $L_F$ over $L_B$ as
$$H^*(CF(L,\Lambda^2),\delta_{1,mc^0})=:HF^*(L_F,E\vert_{L_B},mc^0).$$

Furthermore, let 
$$\Lambda(t):=\left\lbrace \sum_{i} c_{i}t^{\rho_i} \vert c_{i}\in \C,\,\rho_i\in \R,\, \#\lbrace i:c_{i}\neq 0, \rho_i\leq N\rbrace <\infty  \right\rbrace
$$
be the universal Novikov field and define
$$HF(L,E,\Lambda(t),mc):=H^*(CF(L,\Lambda(t)),\delta_{mc})$$
to be the cohomology of the complex \eqref{differentialintro} with coefficients in $\Lambda(t)$ and with respect to some weak Maurer-Cartan solution. 

\begin{theorem}\label{main theorem introduction 2}
Under the assumptions of Theorem \ref{main theorem introduction}, we have
\begin{enumerate}
\item (Theorem \ref{main theorem}: Second page) $\delta_{1,mc^0}^2=0$, and 
$$\mc{E}_2^*\cong  HF^*(L_F,E\vert_{L_B},mc^0).$$

\item (Corollary \ref{vanishingonsecondpagecorcont}: Vanishing criterion)\label{vanishingcorollaryintro} Assume the transition maps for $E$ in a neighbourhood of $L_B$ take values in $\ham (F,\omega_F)$. Then $mc^0_{M}$ is a weak Maurer-Cartan solution for $(F,L_F)$, and if $$HF^*(L_F,F,\Lambda(t),mc^0_{M})\cong 0,$$ then $$HF(L,E,\Lambda(t),mc)\cong 0,$$
where we think of $mc$ resp.~ $mc^0_{M}$ as having coefficients from $\Lambda(t)$ by replacing $q,r$ with $t$.
\end{enumerate}
\end{theorem}

Finally, we make a few remarks on the technical details of our results. In regards to Theorem \ref{transversality} (referenced in Theorem \ref{main theorem introduction}), we obtain transversality for pearly Morse flows in $E$ using a geometric/Hamiltonian perturbation system, partially based on that of Cieliebak-Mohnke and later Charest-Woodward \cite{CM, CW1} (this is why we require $L_B\subset B$ to be rational). The Hamiltonian part of the perturbation system is as follows: Let $v:(D,\partial D)\rightarrow (B,L_B)$ be a holomorphic disk and take the pulled-back fiber bundle $v^*E$. Suppose that $h\in \Lambda^1(D,C^\infty(F))$ is a Hamiltonian-valued one-form that infinitesimally describes parallel transit in the pulled-back connection, and take
$$X_h\in \Lambda^1 (D,\vect (F))$$ to satisfy
$$dh=-\iota_{X_h}(v^*a_H).$$
With the appropriate almost complex structure, lifting $v$ to a holomorphic $\tilde{u}:(D,\partial D)\rightarrow (E,L)$ corresponds to solving the perturbed $\bar{\partial}$-equation in $F$ with \emph{moving boundary conditions}:
\begin{gather}
\bar{\partial}u-X^{0,1}_h\circ u=0\\
u(e^{2\pi i \theta})\subset \Phi_\theta^{X_{\partial \theta}}(L_F)
\end{gather}
(cf Salamon- Akveld \cite{salamonakveld}, Gromov \cite[(1.4)]{gromov}, Mcduff-Salamon \cite[Ch. 8]{ms2}) where $\Phi_\theta^{X_{\partial \theta}}$ is the Hamiltonian holonomy around $v\vert_{\partial D}^*E$. In particular, one can achieve transversality for ``vertically constant" disks by perturbing $h$. We explore this point further in the sequel \cite{fiberedpotential}.

Theorem \ref{mcompact} follows from Theorem \ref{transversality}, where the main point is that the almost complex perturbation data must satisfy important coherence conditions as in \cite{CM,CW2}. In addition, perturbations must be compatible with ``forgetting pearls that map to a single fiber". 

\begin{remark}
The restriction that $F$ is monotone and $L_F$ has minimal Maslov index 2 is mostly a convenience for proving Theorem \ref{transversality} and showing cancellation of vertical disk bubbles in Theorem \ref{mcompact}. In Theorems \ref{main theorem introduction} and \ref{main theorem introduction 2} the monotone/Maslov index assumption does not enter into the spectral sequence, the definition of the second page, or the weak Maurer-Cartan solutions $mc^0$, $mc^0_{x_M}$. If one can find an almost complex submanifold $D_F\subset E\vert_{L_B}$ with $[D_F]^{PD}=[a_H]$ that acts as a ``Donaldson divisor" in each fiber $F$ (that is, it stabilizes vertical holomorphic disks and spheres), the author believes that class of symplectic Mori fibrations and fibered Lagrangians can be expanded to include $L_F$ that are rational. However, the existence of such a divisor is not completely clear given that Donaldson \cite{SD} and Auroux-Gayet-Mohsen \cite{agm} both require that the divisor is dual to a symplectic form.

Aside from the above approach, it would be possible to use a stabilizing divisor for all of $(E,L)$ if the weak coupling form is rational, see Charest-Woodward \cite{CW1}. Such a divisor would not project to a stabilizing divisor in $(B,L_B)$, so it is difficult to say if any computational advantage is gained from this approach. This ends the remark.
\end{remark}

Outside of the vanishing result in Theorem\ref{main theorem introduction 2}, our spectral sequence may have more utility in a theoretical setting, ie when working in the Fukaya category rather than with hard computations. This is due to the following line of reasoning: Take an isolated pearly Morse trajectory $u:C\rightarrow (E,L)$ without spheres. The projection $\pi\circ u$ is Fredholm regular and aspherical, but it can lie in a moduli space of very high expected dimension. This makes Theorem \ref{main theorem introduction} hard to use unless one has a description of \emph{all} of the holomorphic disks of sufficiently high index in the base, as well as all of their lifts. Nonetheless, we include an example that demonstrates (Vanishing criteria).

\subsection{Example: Vanishing of Floer cohomology in complex flag manifolds}\label{flagexample}
We use work from Nishinou-Nohara-Ueda, Nohara-Ueda, and Cho-Kim-Oh \cite{nish} \cite{noharaueda} \cite{chokimoh} to construct some fibered Lagrangians in Flag manifolds whose Floer cohomology vanishes for specific coefficients. Let
\begin{equation}\label{flagmanifolddef}
\mc{F}(n_1,\dots,n_r;n):=\left\lbrace V_{\bullet}:=V_{n_1}\subset\dots \subset V_{n_r}\subset \bb{C}^n\vert \, \dim_\bb{C} V_{n_i}=n_i>0\right\rbrace
\end{equation}
denote a manifold of flags $V_\bullet$ in $\bb{C}^n$. Let  $\mc{H}_n$ denote $n\times n$ Hermitian matrices. A flag manifold can be realized as a coadjoint orbit:
$$\mc{H}_n\cong\mf{u}^*(n)\supset\mc{O}_\lambda:=Ad_{U(n)}^*\cdot\T{diag}(\lambda)$$ where $\lambda$ is a sequence of non-increasing eigenvalues
\begin{equation*}
\underbrace{\lambda_1=\cdots=\lambda_{n_1}}_{n_1}>\underbrace{\lambda_{n_1+1}=\cdots=\lambda_{n_2}}_{n_2-n_1}>\cdots>\underbrace{\lambda_{n_{r}+1}=\cdots=\lambda_n}_{n-n_{r}}.
\end{equation*}
If $k_i=n_i-n_{i-1}$ and with $k_1=n_1$ and $n_{r+1}=n$, we have that 
$$\mc{O}_\lambda\cong U(n)/U(k_1)\times\cdots\times U(k_{r+1})$$
as smooth $U(n)$-homogeneous manifolds.

It is known that $\mc{O}_\lambda$ comes equipped with a natural K\"ahler form called the Kirillov-Kostant-Souriau symplectic form, which we denote by the KKS form for short. For $X,Y\in \mf{u}(n)=i\mc{H}_n$ and $h\in \mc{H}_n$ set
$$\omega(X,Y)_h:=\tr{ih[X,Y]}=\tr{iY[X,h]}.$$
The kernel is the set of $X,Y$ so that $[X,h]=0$ or $[Y,h]=0$. We have that 
\begin{equation*}
T_h\mc{O}_\lambda=\left\lbrace [X,h]:X\in\mf{u}(n)\right\rbrace
\end{equation*}
so that $\omega_h$ descends to a non-degenerate skew-symmetric bilinear form $\omega_\lambda$ on $\mc{O}_\lambda$ via
$$\omega_\lambda([X,h],[Y,h]):=\omega(X,Y)_h.$$
The closedness of such a two-form is a consequence of the Jacobi identity as in Audin \cite[p. 52]{audintorusactions}.

On $\mc{O}_\lambda$ there is a known integrable system called the Gelfand-Cetlin system, which has an associated moment map image $\Delta_\lambda$. The works \cite{chokimoh} \cite{chokimoh2} \cite{nish} \cite{noharaueda} and Pabiniak \cite{pabiniak} have built a dictionary of the Lagrangian fibers of this system and their associated Floer cohomology in certain cases. In particular, \cite{chokimoh} completely classify the Lagrangian fibers of $\Delta_\lambda$ via combinatorics of ladder diagrams.

We first describe the Gelfand-Cetlin system on $\mc{O}_\lambda$, and then describe how one can realize some Lagrangians from the classification in \cite{chokimoh} as fibered Lagrangians. Then we show that some of these fibered Lagrangians have vanishing Floer cohomology by Theorem \ref{main theorem introduction 2}.

\subsubsection{The Gelfand-Cetlin system} For $h\in \mc{O}_\lambda$ a Hermitian matrix, we get a sequence of functions
$$u_{i,j}:\mc{O}_\lambda\rightarrow \bb{R}$$
where $u_{i,j}$ is the $i^{th}$ largest eigenvalue of the upper left $(i+j-1)\times (i+j-1)$ minor of $h$. Note that the $u_{i,j}$ satisfy a diagram of inequalities as in Figure \ref{gcinequalities}.

\begin{figure}
\begin{align*} 
&\lambda_1 \\
&\, \uleq \qquad \ulleq \\
&u_{1,n-1} \quad \geq \quad \lambda_2 \\
&\uleq \qquad \qquad \quad \uleq \qquad \quad \ulleq \\
& u_{1,n-2}\quad \geq\quad  u_{2,n-2}\quad \geq\quad \lambda_3\\
&\uleq \qquad \qquad \quad \uleq \qquad \qquad \quad \uleq \qquad \ulleq\\
& \vdots \qquad \cdots \qquad \vdots \qquad \cdots \qquad\vdots \qquad\qquad \qquad \ddots \\
&u_{1,1}\quad\geq \quad u_{2,1}  \quad \cdots \qquad \cdots \quad \geq\quad u_{n-1,1}\quad\geq \quad\lambda_n
\end{align*}
\caption{Inequality diagram for Gelfand-Cetlin system}\label{gcinequalities}
\end{figure}

Let $\Delta_\lambda$ be the \emph{Gelfand-Cetlin polytope}, defined to be the collection of points in $\bb{R}^{\frac{(n)(n-1)}{2}}$ satisfying the inequalities in Figure \ref{gcinequalities}. We have the following information about this system of functions:
\begin{proposition}Guillemin-Sternberg \cite[section 5]{guilleminsternberggc}\label{gcpolytopeprop}
The Gelfand-Cetlin polytope $\Delta_\lambda$ coincides with the image of the system $\Phi_\lambda:=(u_{i,j})$.
\end{proposition}
Moreover, by \cite{guilleminsternberggc}, the system of non-constant $(u_{i,j})$ form a completely integrable, smooth Hamiltonian system on an open dense subset $U\subset \mc{O}_\lambda$. The non-smoothness occurs at points where $u_{i+1,j}=u_{i,j}$ or $u_{i,j}=u_{i,j+1}$ \cite[Proposition 5.3]{guilleminsternberggc}, but will not concern us here.

\subsubsection{Lagrangians as fibered Lagrangians}
In this subsection, we review the material in \cite[section 2]{chokimoh}, and then recast some of the moment fiber Lagrangians as Lagrangians fiber bundles in symplectic Mori fibrations. One can arrange the non-constant functions from Figure \ref{gcinequalities} as a \emph{ladder diagram}. For example, we can consider $\mc{F}(3,4,5;6)$ as the coadjoint orbit through $(t,t,t,2,0,-2)$ with $t>2$. This has a ladder diagram

\begin{figure}[h]
\centering
\def\svgwidth{.5\columnwidth}
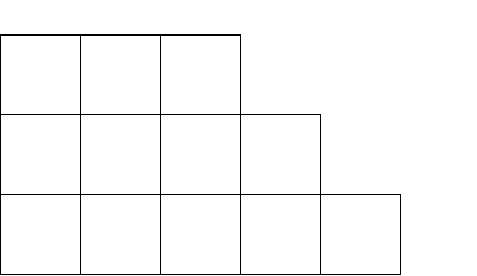

\caption{Ladder diagram for $\mc{O}_{t,t,t,2,0,-2}$.}
 \label{ladderdiagram(3,4,5)}

\end{figure} 
Consider the ladder diagram as graph with vertices at a subset of $\bb{Z}^2\subset\bb{R}^2$ where the lower left hand point sits at the origin. Each point $(i,j)$ with $i+j=n$ represents the fact that $V_{i}$ in \eqref{flagmanifolddef} has dimension $i$ and codimension $j$.

Next we describe the faces of the polytope $\Delta_\lambda$. Call the lattice points with $i+j=n$ the \emph{top vertices}, and equip the ladder diagram with the taxicab metric. A subgraph on the ladder diagram is called a \emph{positive path} if it is the image of a geodesic between the origin and a top vertex.

Following An-Cho-Kim \cite[Definition 1.5]{anchokim-fvectors}, a subgraph $\g_f$ that is a collection of positive paths $f$ is said to be a \emph{face} of $\Delta_\lambda$ if $\g_f$ contains all top vertices. We say that $\g_f\subset \g_g$ if and only if $f\subset g$ as subgraphs. The \emph{dimension} of the face $\g_f$ is $\T{rk}_{\bb{Z}} H_1(\g_f,\bb{Z})$.

Let $\mc{L}_\lambda$ denote the ladder diagram for $\mc{O}_\lambda$. We have the following classification of faces of $\Delta_\lambda$.

\begin{theorem}[{\cite[Theorem 1.9]{anchokim-fvectors}}]
There is a bijection $$\Psi: \{\T{faces of }\mc{L}_\lambda\}\rightarrow \{\T{faces of }\Delta_\lambda\}$$  such that
\begin{gather*}
\g_f\subset \g_g \Leftrightarrow \Psi(\g_f)\subset \Psi(\g_g)\\
\dim \g_f=\dim_{\bb{R}}\Psi(\g_f).
\end{gather*}
\end{theorem}
For example, $\mc{F}(3,4,5;6)$ has a face as in Figure \ref{facediagram(3,4,5)}. 
\begin{figure}[h]
\centering
\def\svgwidth{.5\columnwidth}
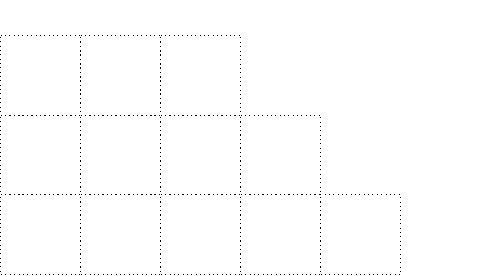

\caption{A face of $\mc{O}_{(t,t,t,2,0,-2)}$.}\label{facediagram(3,4,5)}
\end{figure}

The face in $\Delta_\lambda$ is given by setting all coordinates enclosed by positive paths equal to each other. For example, the face in Figure \ref{facediagram(3,4,5)} is given by
\begin{gather*}
\left\{2\leq u_{1,1}=\cdots = u_{1,3}=u_{2,1}=\cdots \cdots =u_{3,3}\leq t\right\}\cap \left\{u_{4,1}=u_{4,2}=u_{5,1}=0\right\}
\end{gather*}

Each moment fiber of $\Delta_\lambda$ is an isotropic submanifold of $\mc{O}_\lambda$. Thus, one could ask which fibers are Lagrangian and what is their topology. This question is answered in \cite[Theorem A]{chokimoh}: For $\mathbf{u}_1, \mathbf{u}_2$ contained in the relative interior of a single face $\g_f$, we have that $\Phi^{-1}(\mathbf{u}_1)\cong \Phi^{-1}(\mathbf{u}_2)$, and $\Phi^{-1}(\mathbf{u}_i)$ has the structure of an iterated bundle
$$E_{n-1}\xrightarrow{p_{n-1}} E_{n-2}\xrightarrow{p_{n-2}} \cdots \xrightarrow{p_2} E_1 \xrightarrow{p_1} E_0=~\T{point}$$
such that the fiber of $E_i\xrightarrow{p_i}\rightarrow E_{i-1}$ is either a point or a product of odd dimensional spheres.

Sections 4 and 5 in \cite{chokimoh} give a systematic way to compute the fibers of each $p_i$ in terms of the face data. The exact method is outside the scope of our example. However, by \cite[Example 3.8]{nish} we have that a moment fiber from the face in Figure \ref{facediagram(4,5,6)} is a Lagrangian $SU(2)$. By \cite[Proposition 2.7]{noharaueda}, the face in Figure \ref{facediagram(3,6)} gives a Lagrangian $U(3)$. 
\begin{figure}[h]
\centering
\begin{minipage}{.5\textwidth}
  \centering
  \def\svgwidth{.5\columnwidth}
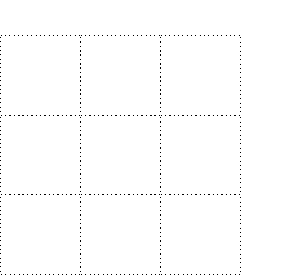
\caption{A face of $\mc{O}_{(z,z,z,y,y,y)}$.}\label{facediagram(3,6)}
\end{minipage}%
\begin{minipage}{.5\textwidth}
  \centering
  \def\svgwidth{.4\columnwidth}
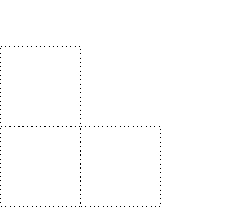
\caption{A face of $\mc{O}_{(2,0,-2)}$.}\label{facediagram(4,5,6)}
\end{minipage}
\end{figure}
Finally, by the method of computation from \cite[section 4]{chokimoh}, a moment fiber from the face in Figure \ref{facediagram(3,4,5)} is precisely a Lagrangian $SU(2)$ bundle over $U(3)$.

To see that one can realize certain coadjoint orbits as symplectic fibrations, we invoke the following two results from Guillemin-Lerman-Sternberg:
\begin{theorem}[{\cite[Theorem 2.3.3 \& Corollary 2.3.4]{guillemin}}]\label{coadjointthm}
Suppose $G$ is compact and $x$, $\lambda$ are two points in the Weyl chamber of $\mf{g}$. If the isotropy groups satisfy $\mf{g}_x\subset \mf{g}_\lambda$, then the map of coadjoint orbits $\pi:G\cdot x\rightarrow G\cdot \lambda$, $g\cdot x\mapsto g\cdot \lambda$, is a symplectic fibration with typical fiber isomorphic to a coadjoint orbit of $G_\lambda$ on $x$. The symplectic connection on this fiber bundle equals the connection defined by the natural splitting $\mf{g}=\mf{g}_\lambda\oplus \mf{g}_\lambda^\perp$, where $\mf{g}_\lambda^\perp$ is the sum of all root spaces corresponding to the roots that don't vanish on $\lambda$.
\end{theorem} 

\begin{theorem}[{\cite[Theorem 4.7.1]{guillemin}}] \label{coadjointformthm}
Let $x$ and $\lambda$ be elements of $\mf{g}^*$, $\lambda$ being semisimple, and let $\mc{O}_x$ and $\mc{O}_\lambda$ be the coadjoints through them. Suppose there exists a $G$-equivariant map $\pi:\mc{O}_x\rightarrow \mc{O}_\lambda$ mapping $x$ onto $\lambda$. Then $\pi$ has to be a symplectic fibration. Moreover, the symplectic manifold that one obtains by rescaling the symplectic forms on the fibers by $\epsilon$ is the coadjoint orbit through $\mc{O}_{(1-\epsilon)\lambda+\epsilon x}$.
\end{theorem}

These two results to give 
$$\mc{F}(3,4,5;6)\rightarrow \mc{F}(3;6)$$ the structure of a symplectic Mori fibration and allow one to compute the weak coupling form precisely:
\begin{proposition}\label{morifibrationprop}
For $y<z$ and $\epsilon\in \bb{R}_{\geq 0}$, let 
$$x_\epsilon:=(1-\epsilon)(z,z,z,y,y,y)+\epsilon (t,t,t,2,0,-2).$$
For $\epsilon \ll 1$, coadjoint orbit $\mc{O}_{x_\epsilon}$ is a symplectic Mori fibration over $\mc{O}_{x_0}$, and the KKS form is the weak coupling form
$$\omega_{x_\epsilon}=\epsilon a_{\mf{g}^\perp_{x_0}}+ \pi^*\omega_{x_0}.$$
The typical fiber is isomorphic to $\mc{O}_{\eta_\epsilon}$ for 
$$\eta_\epsilon:=\epsilon(2,0,-2)+(1-\epsilon)(y,y,y).$$ In particular, the fibers are a monotone flag manifold $\mc{F}(1,2;3)$ so that $\mc{O}_{x_\epsilon}$ is a symplectic Mori fibration with Hamiltonian holonomy.
\end{proposition}
\begin{proof}

In the proof of Theorem \ref{coadjointformthm} they show in \cite[section 4.7]{guillemin} that
$$\omega_{x_\epsilon}=\epsilon a_{\mf{g}_{x_0}^\perp}+\pi^*\omega_{x_0}$$
where $\epsilon$ is sufficiently small and the connection is induced by the perpendicular $\mf{g}_{x_0}^\perp$. Hence, the mapping
$$\mc{F}(3,4,5;6)\rightarrow \mc{F}(3;6)$$
can be equipped with the structure of a symplectic fibration whose typical fiber is the coadjoint orbit $G_{x_0}\cdot x_\epsilon$. We have that $G_{x_0}\cong U(3)\times U(3)$ and $G_{x_\epsilon}\cong U(3)\times U(1)^3$. Thus, the fiber is the coadjoint orbit $\mc{O}_{\eta_\epsilon}.$ To see that this is monotone, we invoke the following:
\begin{proposition}[{\cite[p. 653-654]{nish} \cite[Proposition 2.4]{chokimoh}}]
For some $\lambda\in \mf{u}^*(n)$, the symplectic form $\omega_\lambda$ on $\mc{O}_\lambda$ satisfies
$$c_1(T\mc{O}_\lambda)=[\omega_\lambda]$$
if and only if
\begin{equation*}\lambda = (\underbrace{n-n_1,\dots}_{k_1} ~,\underbrace{n-n_1-n_2,\dots}_{k_2}~,\dots~,\underbrace{n-n_{r-1}-n_r,\dots}_{k_{r}}~,\underbrace{-n_r,\dots,-n_r}_{k_{r+1}})+(\underbrace{m,\dots~,m}_n)
\end{equation*}
for some $m\in \bb{R}$.
\end{proposition}
Thus we have that $\mc{O}_{\frac{1}{\epsilon}(\eta_\epsilon)}$ is monotone with monotonicity constant $1$. By the definition of the KKS form we have that $\mc{O}_{\frac{1}{\epsilon}(\eta_\epsilon)}\cong \frac{1}{\epsilon}\mc{O}_{\eta_\epsilon}$, so that $\mc{O}_{\eta_\epsilon}$ is monotone with monotonicity constant $\epsilon$.

Finally, Hamiltonian holonomy of the connection follows from the simply-connectedness of the fibers. Hence, 
\begin{equation}\label{3456fibration}
\mc{O}_{\eta_\epsilon}\rightarrow \mc{O}_{x_\epsilon}\xrightarrow{\pi} \mc{O}_{z,z,z,y,y,y}
\end{equation}
is a symplectic Mori fibration with Hamiltonian holonomy.
\end{proof}

Finally, we need to see that the Lagrangian in the total space is actually a fibered Lagrangian. Note that we have maps
\begin{equation}\label{coadjointmaps}
\mc{O}_{x_{\epsilon}}\rightarrow \mc{O}_{x_{\delta}}
\end{equation} given by $g\cdot (x_0+\epsilon(x_1-x_0))\mapsto g\cdot (x_0+\delta(x_1-x_0))$ for $\epsilon\neq 0$. Let $h_{\delta}$ denote the image of an element $h_\epsilon\in \mc{O}_{x_\epsilon}$ under the map $\mc{O}_{x_\epsilon}\rightarrow \mc{O}_{x_{\delta}}$, and denote the Gelfand-Cetlin system on $\mc{O}_{x_\delta}$ by $u^\delta_{i,j}$. Relabel the Gelfand-Cetlin system on $\mc{O}_{\eta_\epsilon}$ by $v^\epsilon_{i,j}$.

\begin{proposition}\label{lagrangianinflagprop}
Let $L_\epsilon$ denote a Lagrangian with
\begin{gather*}
u^\epsilon_{i,j}=u^\epsilon_{k,l},\qquad 1\leq i,j,k,l\leq 3,\\
u^\epsilon_{5,1}=u^\epsilon_{5,2}=u^\epsilon_{6,1}=(1-\epsilon)y
\end{gather*}
in the ladder diagram for $\mc{O}_{x_\epsilon}$. Then $\pi(L_\epsilon)=L_0\subset \mc{O}_{x_0}$ for some Lagrangian $L_0$ with
$$u^0_{i,j}=u^0_{k,l},\qquad 1\leq i,j,k,l\leq 3$$
in Figure \ref{facediagram(3,6)}. The fibers are $L^{\epsilon}\subset \mc{O}_{\eta_\epsilon}$ satisfying
$$v^\epsilon_{i,j}=(1-\epsilon)y$$
(as in Figure \ref{facediagram(4,5,6)} with different eigenvalues).

\end{proposition}
\begin{proof}
For a point $h_\epsilon\in L_\epsilon$, all of the eigenvalues of the upper $3\times 3$ minor are equal, so this minor must be a diagonal matrix. Thus, $U(3)\times \T{Id}$ stabilizes $h_\epsilon$. Let $h_{\delta}$ denote the image of $h_\epsilon$ under the map in \eqref{coadjointmaps}. These maps are equivariant so $U(3)\times \T{Id}$ also stabilizes $h_\delta$. Hence the upper $3\times 3$ minor must also be diagonal for all $\delta\in [0,1]$ and the first claim follows.

We compute the value of the Gelfand-Cetlin system on $L_0$: The eigenvalues of $h_\delta$ all lie on the line satisfying
\begin{equation}
u^\delta_{i,j}(h_\delta)=u^1_{i,j}(h_1)+\delta(u^0_{i,j}(h_0)-u^1_{i,j}(h_1))
\end{equation}
for $1\leq i,j\leq 3$. Since this holds for all $h_\delta$ in the particular face, it follows that $u^0_{i,j}$ is a homothetic transform of $u^1_{i,j}\vert_{L_1}$:
\begin{equation*}\pi^*u^0_{i,j}\vert_{L_1}=\frac{z(u^1_{i,j}-2)}{t-2}+\frac{y(u^1_{i,j}-t)}{2-t}.
\end{equation*}
If we set $z_\epsilon=t+\epsilon(z-t)$ and $y_\epsilon=2+\epsilon(y-2)$, we get 
\begin{equation}\label{valueofprojectedgc}
\pi^*u^0_{i,j}\vert_{L_\epsilon}=\frac{z(u^\epsilon_{i,j}-y_\epsilon)}{z_\epsilon-y_\epsilon}+\frac{y(u^\epsilon_{i,j}-z_\epsilon)}{y_\epsilon-z_\epsilon},
\end{equation}
which allows one to compute the precise value of $(u^0_{i,j})$ on $L_0$.
 
Finally, the structure of $L_F$ is clear. In particular, \cite[Section 4.1]{noharaueda} shows that minimal Maslov index of $L_F$ is $4$, so that $L_\epsilon$ is a fibered Lagrangian.
\end{proof}

\begin{remark}
We have that Propositions \ref{morifibrationprop} and \ref{lagrangianinflagprop} apply to certain flag manifolds in the shape of
$$\mc{F}(1,2;3)\rightarrow \mc{F}(n-3,n-2,n-1;n)\rightarrow \mc{F}(n-3;n)$$
by extending the ladder diagram of Figure \ref{facediagram(3,4,5)} ``to the left". More precisely, if 
$$\hat{x}_1=(\underbrace{t,\dots,t}_{n-3},2,0,-2),$$
$$\hat{x}_0=(\underbrace{z,\dots,z}_{n-3},y,y,y),$$
and $\hat{x}_\epsilon$ is defined analogously, we have a symplectic Mori fibration
$$\mc{O}_{\eta_\epsilon}\rightarrow \mc{O}_{\hat{x}_\epsilon}\rightarrow \mc{O}_{\hat{x}_0}$$
and a fibered Lagrangian
$$L^\epsilon\rightarrow \hat{L}_\epsilon\rightarrow \hat{L}_0$$
as a moment fiber from the analogous face to Figure \ref{facediagram(3,4,5)} that fibers over the face analogous to that in Figure \ref{facediagram(3,6)}. The proof of these two facts is exactly the same as in the special case of $n=6$. This ends the remark.
\end{remark}

We give an application of our spectral sequence to calculate the cohomology of $\hat{L}_\epsilon$:
\begin{proposition}\label{vanishingLepsilon}
Suppose that $mc\in CF^{odd}(\hat{L}_\epsilon,\Lambda_{>0}^2)$ is a weak Maurer-Cartan solution for the $A_\infty$-algebra of $\hat{L}_\epsilon$. Then 
$$HF(\hat{L}_\epsilon,\Lambda(t),mc)\cong 0.$$
\end{proposition}

The proof of this proposition is given after the statement of Corollary \ref{vanishingonsecondpagecorcont}.

\subsection{Outline} 
We start with a review of the basic constructions in symplectic fibrations \ref{symfib}.\\
In section \ref{modulireviewsection}, we review the finer points of marked disks in the setting of stabilizing divisors, and introduce notions pertinent to the fibered situation notions in section \ref{pistabilitysection}.\\
We introduce perturbation data and holomorphic configurations in section \ref{perturbationsection}.\\
Sections \ref{transection} and \ref{comp} contain the details of basic smoothness and compactness for the moduli of holomorphic configurations.\\
In section \ref{spectralsequencesection}, we develop a spectral sequence, show that it converges, and prove Theorem \ref{main theorem introduction 2}. \\
Finally, we discuss the invariance of our theory in section \ref{invariancesection}.

\subsection*{Acknowledgments}
This is a chapter of the author's PhD thesis with some additional refinement. As such, I would like to thank Chris Woodward for suggesting this topic as the beginning of such an interesting program. I also give thanks Nick Sheridan for his immense help with the initial drafts, and more specifically for help with the proof of Theorem \ref{main theorem}, the $\epsilon$ appearing in the definition of $\Lambda^2$, and Remark \ref{continuationmapremark} and its implication on the form of the second page of the spectral sequence.

This project has received funding from the European Research Council (ERC) under the European Union’s Horizon 2020 research and innovation programme (grant agreement No. 772479).

\section{Symplectic fibrations}\label{symfib}
In this section we review the link between a Hamiltonian connection $H$ and $a_H$ in the weak coupling form \eqref{wcfintro1}.


Let $F\rightarrow E\rightarrow B$ be a smooth fiber bundle with connected and compact total space $E$, base $(B,\omega_B)$, and fiber $(F,\omega_F)$. A \emph{symplectic fibration} is such a space $E$ where the transition maps are symplectomorphisms of the fibers: 
\begin{align*}
&\Phi_i:\pi^{-1}(U_i)\rightarrow U_i\times F\\
&\Phi_j\circ \Phi_i^{-1}:U_i\cap U_j\times F\rightarrow U_i\cap U_j\times F\\
&(p,q)\mapsto (p,\phi_{ji}(q))
\end{align*}
where $\phi_{ji}:U_i\cap U_j\rightarrow \mathrm{Symp}(F,\omega_F)$ are {\u C}ech co-cycles.

%
%

Assume that there is a splitting $$TE=TF\oplus H$$
where $TF$ is canonically identified with $\ker (d\pi)$. Let $\gamma:[0,1]\rightarrow B$ and $d\gamma^\sharp$ be the horizontal lift of the derivative of $\gamma$ to a vector field on $H\vert_{\pi^{-1}(\gamma)}$. The flow of $d\gamma^\sharp$ induces parallel transport maps, and we assume that these maps are symplectomorphisms of the fibers in some symplectic trivialization. We call such a connection $H$ a \emph{symplectic connection}.

Given such a connection $H$, the goal now is to construct a 2-form $a_H$ on $E$ that satisfies \eqref{wcfintro2} \eqref{wcfintro3} and also generates the connection:
$$(TF)^{a_H\perp}=H.$$ 
We call such an $a_H$ a \emph{connection form}. If we assume that parallel transport satisfies a Hamiltonian condition, Guillemin-Lerman-Sternberg \cite{guillemin} and McDuff-Salamon \cite{ms1} give a unique construction. Let $n=\dim F$. For a closed two form $a$ on $E$, let
$$(\pi_*a)_p:=\int_{F_p} a^{(n+2)/2}$$
be the two form on $B$ is the result of \emph{integrating over the fiber}. Let $i:F\rightarrow E$ denote the inclusion of some fiber.
\begin{theorem}[{\cite[Theorem 6.21]{ms1}}, \cite{guillemin}]\label{coupling}
Let $H$ be a symplectic connection on a fibration $F\rightarrow E\rightarrow B$. The following are equivalent:
\begin{enumerate}
\item The holonomy around any contractible loop in $B$ is Hamiltonian.
\item There is a unique closed connection form $a_H$ on $E$ with $i^*a_H=\omega_F$ and
$$\pi_* a_H = 0.$$

\end{enumerate}
The form $a_H$ is called the minimal coupling form for the symplectic connection $H$. Any (symplectic) form $\omega_{H,K}:=a_H+K\pi^*\omega_B$ is called a weak coupling form.
\end{theorem}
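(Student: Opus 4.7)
The plan is to establish the two implications separately, with uniqueness handled by a normalization argument. Throughout, the symplectic connection $H$ gives a splitting $TE = TF \oplus H$, and the question is when a closed form $\omega_H$ exists with $H$ as the symplectic orthogonal of $TF$ and $\omega_H|_{F_p} = \omega_{F_p}$.

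For (2) $\Rightarrow$ (1): Given such an $\omega_H$, I would show that the curvature $\Omega_H \in \Omega^2(B, \Gamma(TF))$ of $H$ takes values in Hamiltonian vector fields on each fiber. Concretely, for horizontal lifts $v^\sharp, w^\sharp$ of $v, w \in T_pB$, expanding $d\omega_H(v^\sharp, w^\sharp, \cdot) = 0$ along a vertical direction and using $\omega_H(TF, H) = 0$ yields $\iota_{\Omega_H(v,w)}\omega_F = -d_F \bigl(\omega_H(v^\sharp, w^\sharp)\bigr)$, so $\omega_H(v^\sharp, w^\sharp)$ is a fiberwise Hamiltonian for the curvature. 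Given a contractible loop $\gamma = \partial D$, the holonomy is recovered by integrating the curvature over $D$; the pointwise Hamiltonian structure integrates to a generating Hamiltonian for the holonomy.

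For (1) $\Rightarrow$ (2), existence: I would first build some 2-form $\alpha$ with $\alpha|_F = \omega_F$ and $H$ as symplectic orthogonal, by patching local expressions with a partition of unity (using that transition maps are symplectomorphisms). Decomposing $d\alpha$ by horizontal/vertical type, the purely vertical part vanishes from $d\omega_F = 0$, and the two-vertical–one-horizontal part vanishes because parallel transport is symplectic. What remains is a one-vertical–two-horizontal piece, which by the reverse of the calculation above is fiberwise exact precisely when the curvature is Hamiltonian-valued — exactly condition (1). Choosing fiberwise primitives $h(v,w)$ and assembling them into a 1-form $\beta$ vanishing on $TF$, one arranges $\alpha - d\beta$ to have only horizontal components in $d(\alpha - d\beta)$, so that this form is a pullback $\pi^*\sigma$; since $\pi^*\sigma$ is closed and $B$ has no exotic cohomological obstructions at the level needed, a further subtraction of an appropriate $\pi^*\tau$ kills it while preserving $\alpha - d\beta|_F = \omega_F$.

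For uniqueness, two candidates $\omega_H$ and $\omega_H'$ differ by a closed 2-form that vanishes on $TF$ and respects the splitting, hence by a pullback $\pi^*\gamma$ for some closed $\gamma \in \Omega^2(B)$. The normalization $\int_F \omega_H^{(n+2)/2} = 0$ expands via the binomial theorem into $\int_F \omega_H^{(n+2)/2} + \tfrac{n+2}{2}\pi^*\gamma \wedge \int_F \omega_H^{n/2} + \cdots$; since $\int_F \omega_H^{n/2}$ is a positive multiple of the fiber volume form pulled down to $B$ and hence nonvanishing, the normalization forces $\gamma = 0$. The main obstacle is the existence step: patching the local Hamiltonian potentials $h(v,w)$ consistently across $B$. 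Each is determined only up to locally constant functions on the fiber, and the consistency across overlaps is equivalent to the holonomy around contractible loops being Hamiltonian rather than merely symplectic, which is precisely the hypothesis (1).
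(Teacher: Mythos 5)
The paper does not actually prove this theorem --- it is cited to Guillemin--Lerman--Sternberg and to McDuff--Salamon, and the only content in the paper beyond the statement is a one-paragraph sketch of the construction. So the comparison is between your sketch and the paper's sketch. The paper's sketch is direct: $\omega_H$ is \emph{defined} by taking $\omega_F$ on verticals, zero on mixed, and the unique zero-average Hamiltonian generating $[v_1^\sharp, v_2^\sharp]^{vert}$ on horizontals, with closedness then following from the curvature identity (\ref{curvatureidentity}).

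Your uniqueness step is correct: the difference of two candidates is a basic closed 2-form $\pi^*\gamma$, and the binomial expansion in the normalization $\int_F\omega_H^{(n+2)/2}=0$ (where only the $k=0,1$ terms survive fiber integration for degree reasons) kills $\gamma$. Your $(2)\Rightarrow(1)$ is also the right idea, recovering the curvature identity from $d\omega_H=0$ and passing to holonomy via the flux.

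The existence step, however, has two genuine problems. First, a bookkeeping error: the quantities $h(v,w)$ are functions on fibers indexed by \emph{pairs} of base vectors, so they assemble into a correction \emph{2-form}, not a 1-form $\beta$. And if you really do subtract $d\beta$ for a 1-form $\beta$, you change nothing at the level of the differential, since $d(\alpha-d\beta)=d\alpha$; this cannot ``arrange $\alpha-d\beta$ to have only horizontal components in $d(\alpha-d\beta)$.'' Second, and more seriously, even after the $(1,2)$-part is killed you are left with a closed 3-form $\pi^*\sigma$, and you dismiss the need for $[\sigma]=0\in H^3(B)$ with ``$B$ has no exotic cohomological obstructions at the level needed.'' There is no such general fact; an arbitrary closed 3-form on $B$ need not be exact. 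The resolution in the source (and in the paper's sketch) is that the zero-average normalization must be imposed \emph{during} the construction, not only for uniqueness afterward: prescribing $\omega_H(v_1^\sharp,v_2^\sharp)$ to be the zero-average Hamiltonian of the vertical curvature makes the horizontal $(0,3)$-part of $d\omega_H$ vanish identically (this is a Bianchi-type computation using (\ref{curvatureidentity}) and the fact that fiber averages of Lie derivatives vanish). Deferring the normalization to the end, as you do, leaves the $\pi^*\sigma$ obstruction unresolved.
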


The idea of the proof is that $a_H$ is already determined on vertical and verti-zontal components, so it remains to describe it on horizontal components. This is done assigning the value of the zero-average Hamiltonian corresponding to 
\begin{equation}\label{curvaturedef}
\Omega_H(v_1,v_2):=[v_1^\sharp,v_2^\sharp]^{vert}:=[v_1^{\sharp},v_2^{\sharp}]-[v_1,v_2]^\sharp,
\end{equation} where the $v_i^{\sharp}$ are horizontal lifts of base vectors $v_i$. Call $\Omega_H\in \Lambda^2(TB,TF)$ the \emph{curvature} of $H$. The fact that this vector field is fiberwise Hamiltonian follows from the \emph{curvature identity} \cite[(1.12)]{guillemin}: For any closed connection form $a$ that generates $H$ we have
\begin{equation}\label{curvatureidentity}
-d\iota_{v_1^\sharp}\iota_{v_2^\sharp}a=\iota_{[v_1^\sharp,v_2^\sharp]}a  ~~(\text{mod}~ B)
\end{equation}
where $\text{mod}~ B$ denotes the restriction of both sides to $TF$.

One could ask: If we have two connection forms $a_{H_1}$ and $a_{H_2}$, how are the symplectic forms $a_{H_1}+K\pi^*\omega_B$ and $a_{H_2}+K\pi^*\omega_B$ related? We have the following result:
\begin{theorem}[{\cite[Theorem 1.63]{guillemin}}]\label{connectionisotopytheorem}
For two symplectic connections $H_i$, $i=1,2$, the corresponding forms $a_{H_i}+K\pi^*\omega_B$ are isotopic for large enough $K$.
\end{theorem}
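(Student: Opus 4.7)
The plan is to apply Moser's isotopy argument to the affine interpolation
$$\omega_t^K := \omega_{H_1} + K\pi^*\omega_B + t(\omega_{H_2} - \omega_{H_1}),\qquad t\in[0,1].$$
Each summand is closed (by Theorem \ref{coupling}), so $d\omega_t^K = 0$. For symplecticity at large $K$, note that on every fiber $F_p$ the restriction $\omega_t^K|_{F_p}$ equals the fixed form $\omega_F$, hence is non-degenerate on vertical vectors; after choosing any auxiliary horizontal complement, the horizontal-horizontal block of $\omega_t^K$ is $K\pi^*\omega_B$ plus a bounded curvature-type perturbation coming from the $\omega_{H_i}$. Compactness of $E$ and of $[0,1]$ then ensures that for $K$ sufficiently large, $\omega_t^K$ is non-degenerate uniformly in $t$.

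The crux is showing that $\omega_{H_2} - \omega_{H_1}$ is exact. I claim the de Rham class $[\omega_H] \in H^2(E)$ of the minimal coupling form is independent of the symplectic connection $H$. Indeed, any closed 2-form on $E$ restricting to $\omega_F$ on fibers has cohomology class in a fixed coset of $H^2(E)$, and within that coset the normalization $\pi_*\omega_H^{(n+2)/2}=0$ from Theorem \ref{coupling} pins down a single class. Concretely, if $\omega$ and $\omega + \pi^*\eta$ (for $\eta$ a closed base 2-form) are two normalized representatives, the binomial expansion and projection formula yield
$$\pi_*(\omega + \pi^*\eta)^{(n+2)/2} = \pi_*\omega^{(n+2)/2} + \tfrac{n+2}{2}\,\pi_*(\omega^{n/2})\cdot\eta,$$
where $\pi_*(\omega^{n/2}) = \int_F \omega_F^{n/2}$ is a positive constant, forcing $\eta = 0$; the edge contribution from $H^1(B;\mathcal{H}^1(F))$ in the Leray--Serre filtration is ruled out by an analogous argument on $\pi_*(\omega^{(n+2)/2})$ paired against base 1-forms with fiber 1-form coefficients. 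Hence $[\omega_{H_1}] = [\omega_{H_2}]$, and we may choose $\beta \in \Omega^1(E)$ with $d\beta = \omega_{H_2} - \omega_{H_1}$.

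With exactness in hand, Moser's trick concludes the argument: let $X_t$ be the unique vector field satisfying $\iota_{X_t}\omega_t^K = -\beta$ (well-defined by non-degeneracy), and let $\psi_t$ be its flow from the identity. Then
$$\tfrac{d}{dt}\psi_t^*\omega_t^K = \psi_t^*(\mathcal{L}_{X_t}\omega_t^K + \dot\omega_t^K) = \psi_t^*(d\iota_{X_t}\omega_t^K + d\beta) = 0,$$
so $\psi_1$ realizes the desired isotopy from $\omega_{H_1}+K\pi^*\omega_B$ to $\omega_{H_2}+K\pi^*\omega_B$. I expect the main obstacle to be the exactness step, in particular handling the edge term in the Leray--Serre filtration rigorously; non-degeneracy for large $K$ and the Moser flow are routine once that cohomology class independence is established.
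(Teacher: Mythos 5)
Your overall strategy (Moser's trick applied to the affine interpolation) is the standard one and is sound, and the closedness and large-$K$ non-degeneracy steps are fine. The $F^2$ computation is also correct: the projection formula gives $\pi_*(\omega+\pi^*\eta)^m = \pi_*\omega^m + m\bigl(\int_F\omega_F^{n/2}\bigr)\eta$ (higher-order terms vanish for degree reasons), so the normalization does force $\eta=0$ when the difference is a pullback from the base.

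The gap is exactly where you flag it: the edge term. The difference $\omega_{H_2}-\omega_{H_1}$ is a closed 2-form that vanishes on $\Lambda^2 TF$, but nothing so far confines its class to $F^2 H^2(E)=\pi^*H^2(B)$. The assertion that the $E^{1,1}_\infty$ contribution is ``ruled out by an analogous argument on $\pi_*\omega^{(n+2)/2}$ paired against base 1-forms with fiber 1-form coefficients'' is not an argument; it is not clear what pairing is meant or what it would show, and I do not see how to make it rigorous along those lines. So as written the proof does not establish exactness, which you yourself identify as the crux.

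The way to close the gap cleanly, and it is consistent with the machinery the paper sets up in the subsection on varying the connection, is to use the affine structure on the space of symplectic connections with Hamiltonian holonomy around contractible loops rather than chasing the Leray--Serre filtration. Two such connections differ by a fibered Hamiltonian 1-form $\sigma\in\Lambda^1(B,C^\infty_0(F))$; writing $\tilde\sigma$ for its pullback to a 1-form on $E$ vanishing on vertical vectors, the form $a_\sigma:=\omega_{H_1}-d\tilde\sigma$ is a closed connection 2-form inducing $H_2$. One then checks, exactly as in your $F^2$ computation but in reverse, that the unique normalized representative is $\omega_{H_2}=a_\sigma-\pi^*\gamma$ with
$$\gamma=\frac{1}{m\int_F\omega_F^{n/2}}\,\pi_*a_\sigma^m,\qquad m=\tfrac{n+2}{2},$$
and $\pi_*a_\sigma^m$ is exact because
$$a_\sigma^m-\omega_{H_1}^m=-d\Bigl[\tilde\sigma\wedge\textstyle\sum_{k=0}^{m-1}a_\sigma^k\,\omega_{H_1}^{m-1-k}\Bigr],\qquad \pi_*\omega_{H_1}^m=0,$$
so $\pi_*a_\sigma^m=-d\,\pi_*[\cdots]$. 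Hence $\omega_{H_2}-\omega_{H_1}=-d\tilde\sigma-\pi^*\gamma$ is manifestly exact, and your Moser flow then finishes the proof. This route gives exactness directly and avoids the spectral sequence entirely.
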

The proof first writes down the difference of two minimal coupling forms as an exact form, and then uses a Moser type argument that requires large $K$. We note that the proof in \cite[section 1.6]{guillemin} assumes that $F$ is simply connected, but the assumption that parallel transit is locally Hamiltonian can replace this.

Now that we understand the relationship between locally Hamiltonian connections and compatible symplectic forms, we move on to define a Lagrangian in this context:
\begin{definition}
A \emph{fibered Lagrangian} in a symplectic Mori fibration is a Lagrangian $L\subset (E,\omega_{H,K})$ that fibers as
$$L_F\rightarrow L\xrightarrow{\pi} L_B$$
with $L_F\subset (F,\omega_F)$ a Lagrangian with minimal Maslov index $2$ and $L_B\subset (B,\omega_B)$ a rational Lagrangian.
\end{definition}

Let $H_L:=H\cap TL$ be the restriction of the connection to $L$. We have the following basic criteria for constructing a fibered Lagrangian:

\begin{lemma}{(Fibered Lagrangian Construction Lemma).}\label{fiberedlagrangianlemma}
Let $L_F\rightarrow L\rightarrow L_B$ be a connected subbundle with $L_F,~ L_B$ Lagrangian. Then $L$ is Lagrangian with respect to $a+K\pi^*\omega_B$ if and only if

\begin{enumerate}
\item\label{paralleltransinv} $L$ is invariant under parallel transport along $L_B$, and
\item\label{normalizing} there is a point $p\in L_B$ such that $a_p\vert_{TL_F\oplus H_L}=0$
\end{enumerate}

\end{lemma}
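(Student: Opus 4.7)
The plan is to exploit the splitting $TE = TF \oplus H$ to decompose $(a+K\pi^*\omega_B)|_{TL}$ into checkable pieces; since $L_B$ is Lagrangian in $B$, $\pi^*\omega_B$ vanishes on $TL$ in either direction, so the claim reduces in both cases to analyzing $a|_{TL}$.

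For the forward direction, if $L$ is Lagrangian then $a|_L = 0$, which gives (2) at every point (in particular at some $p$). For (1), I would take $v\in T_xL$ and split it as $v = v^{\mathrm{vert}}+v^{\mathrm{horiz}}$ along $TE = TF\oplus H$; pairing with any $u\in T_xL_F\subset T_xL$, the $a$-orthogonality of $TF$ and $H$ kills the horizontal contribution, leaving $0 = a(v,u) = \omega_F(v^{\mathrm{vert}},u)$. Since $L_F$ is Lagrangian in $F$, this forces $v^{\mathrm{vert}}\in T_xL_F$, and hence $v^{\mathrm{horiz}}\in T_xL\cap H_x$. Thus horizontal lifts of $TL_B$ are tangent to $L$, which is exactly invariance under parallel transport along $L_B$.

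For the reverse direction, assume (1) and (2). A dimension count gives $\dim L = \dim L_F + \dim L_B = \frac{1}{2}\dim E$, and (1) provides the splitting $TL = TL_F\oplus H_L$. I check $a|_L$ on each block: $TL_F\times TL_F$ vanishes by $L_F$ Lagrangian, $TL_F\times H_L$ vanishes by vertical-horizontal $a$-orthogonality, leaving only the $H_L\times H_L$ block encoded by $\beta_q(v_1,v_2) := a_x(v_1^\sharp, v_2^\sharp)$ for $x\in L\cap F_q$. To see $\beta_q$ is independent of $x\in L\cap F_q$, invoke the curvature identity \ref{curvatureidentity}: the vertical vector $R(v_1,v_2) = [v_1^\sharp,v_2^\sharp]^{\mathrm{vert}}$ is the Hamiltonian vector field on $F_q$ of the fiber function $a(v_1^\sharp,v_2^\sharp)|_{F_q}$. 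By (1), $[v_1^\sharp,v_2^\sharp]$ is tangent to $L$, so $R(v_1,v_2)\in TL\cap TF = TL_F$. A Hamiltonian vector field tangent to the Lagrangian $L\cap F_q$ forces its Hamiltonian to be constant there, so $\beta_q$ is well-defined and $a|_L = (\pi|_L)^*\beta$ for a closed 2-form $\beta$ on $L_B$.

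The main obstacle is the final step: propagating $\beta_p = 0$ from (2) to $\beta \equiv 0$ on all of $L_B$, since closedness of $\beta$ on $L_B$ alone is not enough to kill it off one vanishing point. The plan is to leverage both the parallel-transport invariance (1) and the zero-mean normalization $\int_F a^{(n+2)/2} = 0$ of the minimal coupling form: because $L$ transports into itself along any path in $L_B$, the family of zero-mean curvature Hamiltonians $a(v_1^\sharp,v_2^\sharp)|_{F_q}$ varies coherently, their constant values on $L\cap F_q$ assemble into $\beta$, and the Hamiltonian character of the holonomy (combined with the vanishing at $p$) should rigidify this family to zero along any path from $p$, giving $\beta\equiv 0$. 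This last propagation is the most delicate piece and will require careful bookkeeping with the connection 1-form.
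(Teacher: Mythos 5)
Your approach is essentially the paper's: split $TE = TF\oplus H$, reduce to the horizontal block $\beta_q(v_1,v_2)=a(v_1^\sharp,v_2^\sharp)$, and use the curvature identity together with connectedness of $L$. The forward direction is complete and is a clean, direct version of the paper's argument by contradiction. Your observation that $[v_1^\sharp,v_2^\sharp]^{vert}$ is tangent to $L_F$ (by invariance under parallel transport) and is the Hamiltonian vector field of $a(v_1^\sharp,v_2^\sharp)\vert_{F_q}$, forcing $\beta_q$ to be constant along each fiber $L\cap F_q$, is correct and is the fiber-direction half of the constancy argument.

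The gap is the final step, and it is much less delicate than you suggest — in particular it does not require the zero-average normalization at all, only closedness of $a$ and $a$-orthogonality of $TF$ and $H$. Apply the curvature identity \ref{curvatureidentity} once more, this time pairing with a \emph{horizontal} lift $x^\sharp$ tangent to $L$ (which exists by item~\ref{paralleltransinv}): for vector fields $v,w$ on $L_B$,
\begin{equation*}
x^\sharp\cdot a(v^\sharp,w^\sharp) \;=\; -\,a\bigl([v^\sharp,w^\sharp]^{vert},\,x^\sharp\bigr) \;=\; 0,
\end{equation*}
since the first argument is vertical, $x^\sharp$ is horizontal, and $a$ vanishes on $TF\times H$ by definition of the symplectic connection. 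So $a(v^\sharp,w^\sharp)$ is locally constant along horizontal directions in $L$; combined with the fiber-constancy you already proved and connectedness of $L$, the function $a(v^\sharp,w^\sharp)$ is constant on $L$, hence identically zero by item~\ref{normalizing}. This is precisely what the paper does (and, in fact, the paper only spells out the horizontal direction and leaves the fiber direction implicit, so the two arguments are complementary halves of the same constancy statement). Your speculation about needing the normalization $\int_F a^{(n+2)/2}=0$ to ``rigidify the family'' is a red herring — that normalization pins down $a$ on $H\times H$ uniquely, but the lemma's conclusion only needs the curvature identity, which holds for any closed connection form.
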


\begin{proof}
First, we show the necessity of invariance under parallel transport and that of point \eqref{normalizing}. Suppose that $L$ is Lagrangian and assume that $L$ is not invariant under parallel transport, so that there exists a vector $v\in TL_B$ s.t. $v^\sharp\vert_L\notin H_L$. Thus, there is a vector $w\in TF$ so that $w+v^\sharp\in 0\oplus H_L$. If $w\in TL_F$, then $L$ must be invariant under parallel transport along $v$, so assume that $w\notin TL_F$. Since $L_F$ is Lagrangian, there is a vector $y\in TL_F$ so that $a(y\oplus 0,w+ v^\sharp)=\omega_F(y,w)>0$. This contradicts the fact that $L$ is Lagrangian. The fact that  $a_p\vert_{TL_F\oplus H_L}=0$ follows easily from $L$ being Lagrangian.

Conversely, we show that $a\vert_L=0$. Let $x^\sharp$ for $x\in TL_B$ be a horizontal lift of a vector field, and $\Phi_{x^\sharp}$ its time 1 flow. We will argue that $\Phi_{x^\sharp}^*a=a$, and since $L$ is connected and invariant under parallel transit, it will follow from point \eqref{normalizing} that $a\vert_L=0$. 

\begin{claim} $\Phi_{x^\sharp}^*\Omega_H=\Omega_H.$
\end{claim}
\begin{proof}[Proof of claim.] We have $(\Phi_{x^\sharp})_*[v^\sharp,w^\sharp]=[(\Phi_{x^\sharp})_*v^\sharp,(\Phi_{x^\sharp})_*w^\sharp]$ by properties of the Lie bracket, so it remains to show that the second term in \eqref{curvaturedef} is functorial. We show that $\Phi_{x^\sharp}$ preserves horizontal lifts: On the contrary, suppose that $(\Phi_{x^\sharp})_*w^\sharp $ has a non-zero $TF$ component. Then $(\Phi_{x^\sharp})^{-1}_*(\Phi_{x^\sharp})_*w^\sharp=w^\sharp$ must have a non-zero $TF$ component since $\Phi_{x^\sharp}$ is a fiberwise diffeomorphism. This is a contradiction, so $(\Phi_{x^\sharp})_*$ preserves $H$. The fact that $(\Phi_{x^\sharp})_*w^\sharp=((\Phi_{x})_*w)^\sharp$ now follows from the identity $\pi\circ \Phi_{x^\sharp}=\Phi_x\circ \pi$.
\end{proof}

Finally, from the change of variables formula and by using Fubini's theorem in coordinates adapted to the fibration, we have that 
$$\int_{F_p}\Phi_{x^\sharp}^*a^{(n+2)/2}=\Phi_x^*\int_{\Phi_{x^\sharp}(F_p)} a^{(n+2)/2}.$$
Since $a(v^\sharp,w^\sharp)$ is defined to be the Hamiltonian of $\Omega(v^\sharp,w^\sharp)$ satisfying the zero average condition, it follows that $\Phi^*_{x^\sharp}a$ is also a minimal coupling form for the connection $H$. By uniqueness, we have $\Phi^*_{x^\sharp}a=a$ and $a$ must vanish on $L$ by point \eqref{normalizing}.
\end{proof}


In applications, it is important to be able to extend symplectic connections:
\begin{theorem}[{\cite[Theorem 4.6.2]{guillemin}}]\label{connectionextensiontheorem}
Let $A\subset B$ be a compact set, $A\subset U$ an open neighbourhood, and $H'$ a symplectic connection for $\pi^{-1}(U)$. Then there is an open subset $A\subset U'\subset U$ and connection $H$ on $E$ such that $H=H'$ over $U'$.
\end{theorem}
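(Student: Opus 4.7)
The plan is to work at the level of a closed 2-form representative of $H'$ and extend it by a cutoff in the base, using Thurston's theorem (Theorem \ref{thurston}) to produce a globally defined background form. First I would pick the closed 2-form $a' \in \Omega^2(\pi^{-1}(U))$ whose $TF$-orthogonal complement realizes $H'$, so that $i^*a' = \omega_F$ on every fiber, together with a global closed $a_0 \in \Omega^2(E)$ with $i^*a_0 = \omega_F$ provided by Theorem \ref{thurston}. Setting $\alpha := a' - a_0$ gives a closed 2-form on $\pi^{-1}(U)$ that vanishes when restricted to any fiber.

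Next I would locally write $\alpha = d\beta$. After shrinking $U$, cover $A$ by finitely many open sets $V_j \subset U$ over which the bundle trivializes and each $V_j$ is contractible; on each such patch, K\"unneth yields $H^2(\pi^{-1}(V_j)) \cong H^2(F)$ via fiber restriction, so $[\alpha\vert_{\pi^{-1}(V_j)}] = 0$ and $\alpha$ admits a primitive $\beta_j \in \Omega^1(\pi^{-1}(V_j))$. Crucially, the fiber restriction $i^*\beta_j$ is automatically closed in the fiber directions since $d_F(i^*\beta_j) = i^*(d\beta_j) = i^*\alpha = 0$; this is the precise property needed to preserve the fiberwise normalization $i^*a = \omega_F$ once we multiply $\beta_j$ by a base bump function.

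Finally I would patch. In the case $A \subset V_1$ covered by a single trivializing chart, choose $U' \Subset V_1$ containing $A$ and $\phi \colon B \to [0,1]$ with $\phi \equiv 1$ on $U'$ and $\mathrm{supp}(\phi) \subset V_1$; then
$$a := a_0 + d\bigl((\phi \circ \pi)\, \beta_1\bigr)$$
extends by $a_0$ outside $\pi^{-1}(V_1)$ to a closed form on all of $E$, agrees with $a_0 + d\beta_1 = a'$ on $\pi^{-1}(U')$, and satisfies $i^*a = \omega_F$ on every fiber $F_q$ since $i^*d\bigl((\phi \circ \pi) \beta_1\bigr) = \phi(q)\, d_F(i^*\beta_1) = 0$. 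The associated symplectic connection $H := TF^{a\perp}$ then equals $H'$ on $\pi^{-1}(U')$. For general compact $A$ I would iterate over the $V_j$, at each step treating the already-constructed form as the new background $a_0$ and arranging each cutoff to be identically $1$ on the previously extended region so that no derivative of $\phi$ hits the old primitive there.

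The main obstacle is producing the primitive $\beta_j$ with the correct fiberwise behavior, which forces the refinement of $U$ to base-contractible trivializing patches and relies on the cohomological fact that $i^* \colon H^2(V_j \times F) \to H^2(F)$ is an isomorphism. Once this local exactness is in hand, the remaining work — keeping track of cutoff derivatives along overlaps during the iteration and checking that the final distribution is the symplectic complement of the extended form — is bookkeeping that should not introduce new ideas.
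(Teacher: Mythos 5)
Your single-chart construction is sound: the computation that $i^*a=\omega_F$ survives the cutoff (because $d(\phi\circ\pi)$ kills vertical vectors and $i^*\alpha=0$) is exactly right, and over $\phi^{-1}(1)$ the new form equals $a'$, so the associated $TF^{a\perp}$ agrees with $H'$ there. The gap is in the iteration. Suppose after step $k$ you have $a_k=a'$ on $\pi^{-1}(W_k)$, and at step $k+1$ you find a primitive $\beta_{k+1}$ for $\alpha_{k+1}:=a'-a_k$ on $\pi^{-1}(V_{k+1})$ and cut off by $\phi_{k+1}$ supported in $V_{k+1}$. On the overlap $\pi^{-1}(W_k\cap V_{k+1})$ you have $d\beta_{k+1}=\alpha_{k+1}=0$, so $\beta_{k+1}$ is merely \emph{closed} there and in general nonzero; the term $\pi^*d\phi_{k+1}\wedge\beta_{k+1}$ then perturbs $a_k=a'$ on exactly the part of $W_k\cap V_{k+1}$ where $\phi_{k+1}$ transitions. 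Your remedy, forcing $\phi_{k+1}\equiv 1$ on the previously-extended region, cannot be carried out: if $W_k$ is not contained in $V_{k+1}$, then $\overline{W_k\cap V_{k+1}}$ meets $\partial V_{k+1}$, so no bump function can be $1$ on $W_k\cap V_{k+1}$ and supported in $V_{k+1}$. Salvaging this requires first normalizing $\beta_{k+1}$ to vanish on (a shrinking of) the overlap, and that in turn needs $H^1$ of $\pi^{-1}(W_k\cap V_{k+1})$ to vanish, which you have no control over. So the multi-chart case is not mere bookkeeping; it is a genuine cohomological obstruction your scheme does not dispose of.

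The clean route, and essentially the argument in Guillemin--Lerman--Sternberg, is to avoid the 2-form and the K\"unneth step entirely by using the affine structure on the space of symplectic connections. For $p\in U$ and $v\in T_pB$, the difference of horizontal lifts $\tau_p(v):=v^{\sharp_{H'}}-v^{\sharp_{H_0}}$ is a vertical vector field on $F_p$; since parallel transport for both $H'$ and $H_0$ preserves $\omega_F$, $\tau_p(v)$ is a \emph{symplectic} vector field on $F_p$, so $\tau\in\Omega^1\bigl(U,\mathrm{Vect}_{\mathrm{symp}}(F)\bigr)$. Choose $\phi\colon B\to[0,1]$ with $\phi\equiv 1$ on a neighborhood $U'$ of $A$ and $\mathrm{supp}(\phi)\subset U$, and define $H$ by $v^{\sharp_H}:=v^{\sharp_{H_0}}+\phi(p)\,\tau_p(v)$, extended by $H_0$ outside $\pi^{-1}(\mathrm{supp}\,\phi)$. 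Because $\phi(p)\tau_p(v)$ is still a symplectic vector field, $H$ is a symplectic connection, and $H=H'$ over $U'$. This works globally in a single step, with no cover, no primitives, and no iteration; it is also exactly the mechanism already present in the paper's ``Varying the connection'' subsection (the space $\ham^l(F,B)$ and the deformation $a_\sigma=a-d\sigma$), so it is worth internalizing that the affine viewpoint, not the 2-form cohomology, is what makes extension arguments of this kind soft.
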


\begin{remark} One useful construction is as follows: One constructs a Lagrangian using Lemma \ref{fiberedlagrangianlemma} in some easily computable connection above $L_B$ and extends it to $E$ using Theorem \ref{connectionextensiontheorem}. Then, one applies Theorem \ref{connectionisotopytheorem} to obtain a Lagrangian with respect to some reference connection.
\end{remark}

\section{Moduli of treed disks}\label{modulireviewsection}

We set notation and background for the \emph{treed disks} that function as domains for pseudo holomorphic configurations, with notation and refinement inspired by \cite{CW1} and core ideas inspired by Biran-Cornea \cite{birancorneapearl}. First we describe the types of trees on which curves will be based, with a chart (p. \pageref{treeddiskchart}) and figure (p. \pageref{treeddiskfig}) provided.

A \emph{labelled tree} $T$ is a planar tree $(\e(T),\v(T))$ that is decomposed into certain pieces that we now describe. Assume first that $\v(T)\neq \emptyset$. Then $\e(T)$ consists of
\begin{enumerate}
\item \emph{finite edges} $\e_{-}(T)$ modelled on a closed interval and connecting two vertices, of types
\begin{enumerate}
\item $\e_-^\bullet$ (the \emph{interior nodes}),

\item $\e_-^\circ$ (the \emph{boundary nodes}), and
\end{enumerate}
\item \emph{semi-infinite edges} $\e_{\rightarrow}(T)$, modelled on a half open intervals, of types
\begin{enumerate}
\item $\e_{\rightarrow}^\circ$ (the \emph{boundary markings}), and
\item $\e_\rightarrow^\bullet$ (the \emph{interior markings}).
\end{enumerate}
\end{enumerate}
Moreover, there is a partition of the vertex set into \emph{spherical vertices} $\v_s$ and \emph{disk vertices} $\v_d$ such that there is no interior node between two elements of $\v_d$ and no boundary nodes/markings connecting to a sphere vertex.

If $\v(T)=\emptyset$, then $T$ has one \emph{bi-infinite edge} denoted by $e\in\e_{\leftrightarrow}^\circ(T)$ denote its two ends.

The notation $\e_-^\circ(v)$ will denote the set of boundary nodes incident at $v$ and similarly for $\e_-^\bullet(v)$, etc.

From $\e_{\rightarrow}^\circ (T)$ we can distinguish a single element called the \emph{outgoing edge} or \emph{root}, denoted $\e_{0}(T)\in \e_{\rightarrow}^\circ (T)$. The elements of the set $\e_{\rightarrow}\setminus \e_0(T)$ are called the \emph{leaves}.

For any $e\in \e_\rightarrow(T)$ there is a distinguished point $\infty$ that is added after compactifying, and two distinguished points $\infty$ and $-\infty$ if $e\in \e_{\leftrightarrow}(T)$.

A \emph{metric labelled tree} is a labelled tree with a metric $\ell$ on each $e\in \e^\circ (T)$, with an assignment of a length to each boundary node $$\ell:\e_-^\circ(T)\rightarrow [0,\infty],$$ and such that $\ell$ identifies $e\in\e_\rightarrow^\circ(T)$ with $[a,\infty)$ or $(-\infty,b]$. If $e\in \e_{\leftrightarrow}^\circ(T)$, then $e\cong (-\infty,\infty)$ via $\ell$.

We enlarge our space of trees to consider broken trees: Let $\e_-^{\circ,t}(T)$ denote the set of boundary nodes of length $t$. If a finite edge in a tree with more than one vertex has infinite length, we call that edge \emph{broken} with some number of breakings. Thus we have a \emph{broken metric tree} with the identification
$$e\equiv e'\sqcup_\infty e''$$
where $e'\in \e_\rightarrow^\circ (T')$, $e''\in \e_\rightarrow^\circ (T'')$, and $e\in \e^{\circ,\infty}_{-}(T)$. Here, $\sqcup_\infty$ denotes the identification of the edges at some break point. 

We call a metric tree $T$ with at least one vertex is \emph{broken} if some semi-infinite edge $e$ can be identified with a union, of cardinality $1<n<\infty$, of semi-infinite or bi-infinite intervals via $\ell$.

\begin{definition}
\begin{enumerate}
\item An \emph{$(n,k)$-marked disk} is a Riemann surface biholomorphic to $D\subset \bb{C}$ with $n$ distinct boundary special points $\{ x_i \}$ resp.~$k$ distinct interior special points $\{z_j\}$. An $(n,k)$-marked disk is \emph{stable} if $n+2k\geq 3$.
\item An \emph{$n$-marked sphere} is a Riemann surface biholomorphic to $\bb{P}^1$ with $n$ distinct special points $\{z_j\}$. An $n$-marked sphere is \emph{stable} if $n\geq 3$.
\item A \emph{marked surface} is one of the above.
\end{enumerate}
\end{definition}
Two marked surfaces $(S,\underline{x},\underline{z})$ and $(S',\underline{x}',\underline{z}')$ are said to be \emph{equivalent} if there is a biholomorphism $\phi:S\rightarrow S'$ that sends $\underline{x}$ to $\underline{x}'$ and $\underline{z}$ to $\underline{z}'$.

Let $T$ be a labelled metric tree and let $$\mc{D}_T=\bigsqcup_{\v_d(T)}D_{v,\underline{x}_v,\underline{z}_v}\bigsqcup_{\v_s(T)}\bb{P}^1_{v,\underline{z}_v}$$ be a collection of marked surfaces such that each $D_{v,\underline{x}_v,\underline{z}_v}$ is $$(\# [\e_\rightarrow^\circ (v)\cup\e_-^\circ (v)],\# [\e_\rightarrow^\bullet(v)\cup\e_-^\bullet(v)])-\text{marked}$$
and each $\bb{P}^1_{v,\underline{z}_v}$ is 
$$\# [\e_\rightarrow^\bullet(v)\cup\e_-^\bullet(v)]-\text{marked}.$$

We say that $\mc{D}_T$ is equivalent to $\mc{D}_T’$ if there is a family of biholomorphisms $\{u_v\}_{\v(T)}$ so that $$u_v:D_{v,\underline{x}_v,\underline{z}_v}\rightarrow D_{v,u_v(\underline{x}_v),u_v(\underline{z}_v)}=D’_{v,\underline{x}'_v,\underline{z}'_v}$$
or
$$u_v:\bb{P}^1_{v,\underline{z}_v}\rightarrow \bb{P}^1_{v,u_v(\underline{z}_v)}=\bb{P}^{1’}_{v,\underline{z}'_v}.$$
In other words, the collection $\mc{D}_T$ is component-wise equivalent to $\mc{D}_T’$ as a disjoint collection of marked surfaces.
 
\begin{definition} \label{treeddiskdef}A \emph{treed disk} $\mc{C}$ is a triple $(T,\mc{D}_T,\sim)$ consisting of
\begin{enumerate}
\item\label{tree} a (possibly broken) labelled metric tree $(T,\ell)$,
\item an equivalence class $\mc{D}_T$ of marked surfaces as above,
\item\label{specialpoints} a bijective identification $\sim$ of the interior special points $\underline{z}$ with $\e_\rightarrow^\bullet(v)\cup\e_-^\bullet(v)$ together with an identification between the boundary special points $\underline{x}$ and $\e_\rightarrow^\circ(v)\cup\e_-^\circ (v)$ that respects the relative counter-clockwise cyclic ordering around the disk boundary resp.~around the vertex.
\end{enumerate}
\end{definition}

Let
$$m_\mc{C}:\e_\rightarrow^\bullet(\mc{C})\sqcup \e_-^\bullet(\mc{C})\rightarrow \bb{Z}_{\geq 0}$$
be a function. In section \ref{fiberedperturbationsubsection}, we will call $m_\mc{C}(e)$ the \emph{divisor intersection multiplicity at $e$}.

Finally, for a symplectic manifold $E$ and a Lagrangian $L$, we label the vertices
\begin{gather*}
d_\mc{C}:\v_d(\mc{C})\rightarrow H^{\t{disk}}_2(E,L,\bb{Z})=:H^{d}_2(E,L,\bb{Z})\\
s_\mc{C}:\v_s(\mc{C})\rightarrow H^{\t{sphere}}_2(E,\bb{Z})=:H^{s}_2(E,\bb{Z})\end{gather*}
with relative disk or absolute sphere classes.

We summarize the labelling of a treed disk $\mc{C}$:

\begin{center}\label{treeddiskchart}
\begin{tabular}{||c | c ||}
\hline
$\v_s(\mc{C})$ & spherical vertices (matched with a $\bb{P}^1_v$)\\
\hline
$\v_d(\mc{C})$ & disk vertices (matched with a $D_v$)\\
\hline
$\e_\rightarrow^\bullet (\mc{C})$ & interior markings ($\sim$ interior special points)\\
\hline
$\e_-^\bullet(\mc{C})$ & interior nodes ($\sim$ interior special points)\\
\hline
$\e_\rightarrow^\circ (\mc{C})$ & boundary markings ($\sim$ boundary special points)\\
\hline
$\e_-^\circ(\mc{C})$ & boundary nodes ($\sim$ boundary special points)\\
\hline
$\ell$& metric on $\e^\circ(\mc{C})$\\
\hline
$\ell:\e_-^\circ (\mc{C})\rightarrow [0,\infty]$& boundary node length\\
\hline
$m_\mc{C}:\e^\bullet(\mc{C})\rightarrow \bb{Z}_{\geq 0}$ & divisor intersection multiplicity\\
\hline
$[\cdot]:\v_d(\mc{C})\rightarrow H^d_2(E,L)$ & disk classes\\
\hline
$[\cdot]:\v_s(\mc{C})\rightarrow H^s_2(E)$ & sphere classes\\
\hline
\end{tabular}
\end{center}

The \emph{combinatorial type} $\g(\mc{C})=:\g$ of a treed disk is the underlying metric tree after forgetting finite non-zero edge lengths, together with the labelling from the above chart (sans finite non-zero edge length).  Thus, we can denote $m_\mc{C}$ by $m_\g$ and similarly for $d_\mc{C}$ and $s_\mc{C}$.

A combinatorial type with at least one vertex is called \emph{broken} if  some $e\in \e^{\circ,\infty}_-(\g)$, or if $e\in \e^{\circ}_{\rightarrow}(\g)$ can be identified with a union, of cardinality $1<n<\infty$, of semi-infinite or bi-infinite intervals via $\ell$. It is clear that every broken combinatorial type can by decomposed into finitely many unbroken pieces.

A \emph{geometric realization} $U$ of $\mc{C}$ is the topological space given by replacing each vertex with a representative of the corresponding marked surface and attaching the edges to the identified special points, together with the Riemann surface structure on each surface component. The \emph{geometric realization class} is the space $U$ up to equivalence of the underlying collection $\mc{D}_T$. A treed disk is \emph{stable} if and only if each marked surface is stable.

Let $\mf{M}^{n,m}$ be the moduli space of treed disks with $n$ boundary markings and $m$ interior markings. The connected components of $\mf{M}^{n,m}$ can be realized as a product of Stasheff's associahedra, and thus it is a cell complex.

One can stratify $\mathfrak{M}^{n,m}$ by combinatorial type: For each stable combinatorial type $\g$, let $\mathfrak{M}_\g$ be the subset of treed disks of type $\g$ endowed with the subspace topology. Each connected component of such a subset is contained in single cell-with-corners and it is the level set of smooth constant rank map. Hence it has the structure of a smooth manifold-with-corners.
\begin{figure}[h]
\includegraphics[scale=1]{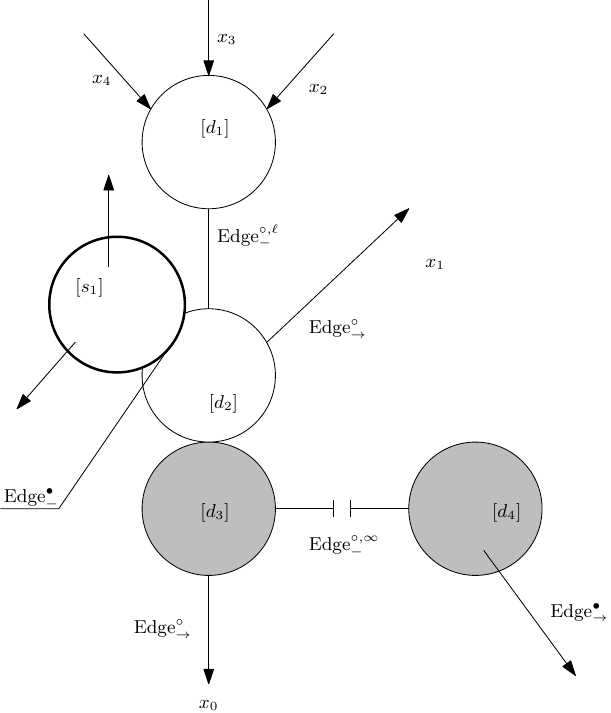}\label{treeddiskfig}
\caption{A geometric realization of a treed disk. Shading is discussed in section \ref{pistabilitysection}.}
\end{figure}

The \emph{universal treed disk of type $\g$} is the fibration $$\mc{U}_\g \rightarrow \mf{M}_\g$$ where the fiber above the point $\mc{C}$ is the geometric realization class of $\mc{C}$. The local trivializations on an open set $\mf{M}_\g^i$ are given by a smooth choice of equivalence class and finite non-zero edge lengths. The former can be generated by looking at smooth Teichmüller slices for $\mc{D}_T$. The transition functions are smooth sections of biholomorphisms between choices.

We can view a universal treed disk as a union of two sets: $\mathcal{S}_\Gamma\cup \mathcal{T}_\Gamma$. The former is the two dimensional part of the equivalence class of each fiber, the later is the one dimensional part, and $\mathcal{S}_\Gamma \cap \mathcal{T}_\Gamma$ is the set of special points. 


There are certain morphisms of graphs, e.g cutting/collapsing edges or making an edge length finite or non-zero, and these give rise to morphisms on the associated moduli of treed disks and the universal treed disk; see \cite{CW2} for the complete description. It is not hard to guess that these operations correspond to limiting behaviour in regards to Gromov compactness. The perturbation data will be defined on $\mc{U}_\g$, and in order to obtain a compactness result the data will need to be compatible with the aforementioned operations on the universal treed disk; see coherence axioms in section \ref{transection}.

\section{Relevant adaptions for the moduli of treed disks}\label{pistabilitysection}
We develop some additional notions catered to the fibration setting. Let $\g$ be an unbroken combinatorial type and $U_\g$ a geometric realization of $\g$.

\begin{definition} A \emph{binary marking} $\varrho$ for a type $\g$ is a subset of $$\v (\g)\cup \e^\circ_- (\g),$$ denoted $m\v(\g)$ and $m\e(\g)$ for marked vertices or edges, for which any map $u: U_\g\rightarrow E$ is required to map the domain for $mv\in m\v(\g)$ resp.~$me\in m\e(\g)$ to a constant after composing with $\pi:E\rightarrow B$. The set of unmarked vertices and edges will be denoted $u\v(\g)$ resp.~$u\e(\g)$.
\end{definition}

Let $S_i$ denote the disk or sphere domain corresponding to the vertex $v_i$.
\begin{definition}
The combinatorial type $\pi_*\g$ is the combinatorial type $\g$ with the following two modifications:
\begin{enumerate}
\item The homology labelling $[\cdot]$ is replaced by $\pi_*[\cdot]$, and
\item $\ell_{\pi_*\g}(e)=0$ for any $e\in m\e(\g)$.
\end{enumerate}
\end{definition}

\begin{definition}
The $\pi$-stabilization map $\g\mapsto\Upsilon(\g)$ is defined on unbroken combinatorial types by forgetting any unstable vertex $v_i$ for which $[v_i]=0$ and identifying or collapsing incident edges as follows:
\begin{enumerate}
\item If $v_i$ has $e_i,f_i\in \e(v_i)$, then $\t{Vert}(\Upsilon(\g))=\t{Vert}(\g)-\lbrace v_i\rbrace$ and we identify the edges $e_i$ and $f_i$: $$\t{Edge}(\Upsilon(\g))=\t{Edge}(\g)/\lbrace f_i \sim e_i\rbrace$$ and in case both are finite we set
$$\ell(\Upsilon(f_i))=\ell(\Upsilon(e_i))=\ell(e_i)+\ell(f_i).$$
We say the resulting edge is semi-infinite if one of $e_i$ or $f_i$ is semi-infinite, and bi-infinite if both are.

\item If $v_i$ has one incident edge $e_i$ and $m_\g(e_i)=0$ for some other vertex $v_j$, then $\Upsilon(\g)$ has vertices $$\v(\Upsilon(\g))=\t{Vert}(\g)- \lbrace v_i\rbrace$$ and edges 
$$\e(\g)-\lbrace e_i\rbrace.$$
If $m_\g(e_i)>0$ for some vertex, we only forget $v_i$.

\item When there is no confusion, the map $\Upsilon$ is defined iteratively in the following sense: If a vertex with $[v]=0$ becomes unstable after applying $\pi$-stabilization then we simply apply it again, and continue until all zero homology vertices are stable. We still denote the iterated map by $\Upsilon$.
\end{enumerate}
\end{definition}

It follows that $\Upsilon(\pi_*\g)$ forgets unstable marked vertices and identifies the adjacent edges, or forgets unstable marked vertices and their single incident edge. This is precisely the combinatorial type of $\pi\circ u$ for $u$ based on a treed disk of combinatorial type $\g$.

We get an induced map on moduli spaces resp.~on universal marked disks, denoted 
$$\Omega_\g:\mf{M}_\g\rightarrow \mf{M}_{\Upsilon(\pi_*\g)}$$
resp.
$$\Omega_\g:\mc{U}_\g\rightarrow \mc{U}_{\Upsilon(\pi_*\g)}.$$

\begin{definition}
An unbroken combinatorial type $\g$ is called \emph{$\pi$-stable} if $\Upsilon(\pi_*\g)$ is stable.
\end{definition}

In terms of the graph, a type $\g$ is $\pi$-stable if and only if for every vertex $v$ with $\pi_*[v]\neq 0$ there are at least three disjoint paths connecting $v$ to a semi-infinite edge, where the count of paths incident to $\e^\bullet(v)$ is weighted by $2$ if $v$ is a disk vertex. This would rule out configurations containing, for example, an unmarked disk with one or two boundary nodes and one interior node that terminates in a marked sphere.
\section{Perturbation data}\label{perturbationsection}
To achieve transversality and compactness for the moduli of Floer trajectories, we use an amalgam of the \emph{stabilizing divisor} approach due to \cite{CM, CW2} with the theory of holomorphic sections of Hamiltonian bundles from \cite{gromov, salamonakveld, ms2}.

For a symplectic manifold $(B,\omega)$ denote by
$$\mc{J}^l(B,\omega):=\left\{ J\in C^l (B,\t{End}(TB)|J^2=-\t{Id},\, \omega(\cdot,J\cdot)>0\right\}$$
the space of $C^l$ almost complex structures that make $\omega$ positive definite. Such a $J$ is said to be \emph{tamed} by $\omega$.
\subsection{Divisors}\label{divisorsection}
First we review some results on the existence and utility of a stabilizing divisor $D_B\subset B\setminus L_B$, assuming that $B$ and $L_B$ are rational. For a homotopy class of map $u:(C,\partial C)\rightarrow (B,L_B)$ from a Riemann surface $C$, let 
$$\omega(u):=\int_C u^*\omega$$ denote the symplectic area homomorphism.
\begin{definition}\label{stabilizing divisor definition}

\begin{enumerate}
\item A \emph{symplectic divisor} in $B$ is a closed codimension two symplectic submanifold $D_B \subset B$ with $[D_B]^{PD}=k[\omega_B]\in H^2(B,\bb{Z})$. A tamed almost complex structure $J$ is \emph{adapted} to a symplectic divisor $D_B$ if $D_B$ is an almost complex submanifold of $(B,J)$.
\item A symplectic divisor $D_B\subset B$ is \emph{stabilizing} for a Lagrangian submanifold $L_B$ if 
\begin{enumerate}
\item $D_B \subset B-L_B$, and
\item there exists an almost complex structure $J_{D_B}$ adapted to $D_B$ such that any $J_{D_B}$-holomorphic disk $u: (C,\partial C )\rightarrow (B,L_B)$ with $\omega(u) > 0$ intersects $D$ in at least one point. Call such a $(D_B,J_{D_B})$ a \emph{stabilizing pair} for $L_B$
.\end{enumerate}
\end{enumerate}
\end{definition}

A major result from \cite{CW1}, drawn from \cite{CM}, is that we can actually find an almost complex symplectic divisor that is stabilizing for $L_B$:
\begin{theorem}
[{\cite[section 3]{CW1}}, {\cite[section 8]{CM}}] \label{stabilizingdivtheorem} Let $(B,\omega)$ be a rational and compact symplectic manifold and let $L_B$ be rational. Then there exist symplectic divisors $D^d\subset B$ of arbitrarily large degree $d$ with adapted almost complex structures $J_{D^d}$ such that the pair $(D^d,J_{D^d})$ stabilize $L_B$.
\end{theorem}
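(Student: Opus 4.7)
The plan is to combine Donaldson's construction of asymptotically holomorphic divisors with a monotonicity argument that exploits the rationality of $L_B$. First, by rationality of $(B,\omega_B)$, after rescaling we may assume $[\omega_B/2\pi]\in H^2(B,\bb{Z})$ and pick a Hermitian line bundle $\mc{L}\to B$ with a unitary connection of curvature $-i\omega_B$. The construction of Donaldson, together with the relative version due to Auroux, then produces for each sufficiently large integer $d$ an asymptotically holomorphic section $s_d$ of $\mc{L}^{\otimes d}$ whose zero set $D_d:=s_d^{-1}(0)$ is a smooth symplectic submanifold of real codimension two with $[D_d]$ Poincar\'e dual to $d[\omega_B/2\pi]$. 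Since $L_B$ is totally real, the relative version of the construction allows us to further arrange $|s_d|\geq c>0$ along $L_B$, so that $D_d\cap L_B=\emptyset$.

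Second, I would construct an $\omega_B$-compatible almost complex structure $J_{D_d}$ that preserves $TD_d$, making $D_d$ an almost complex submanifold. Such structures exist on a tubular neighborhood of any symplectic submanifold and extend to $B$ using the contractibility of $\mc{J}(B,\omega_B)$. To facilitate uniform estimates in the next step, I would further require that $J_{D_d}$ agree with a fixed $\omega_B$-compatible $J_0$ outside a shrinking neighborhood of $D_d$, so that as $d$ varies the $J_{D_d}$ remain in a $C^0$-compact family of tame almost complex structures.

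Third, I would verify the stabilizing property. By rationality of $L_B$, the set
\[
\bigl\{\omega_B(u) : [u]\in H_2^{disk}(B,L_B,\bb{Z})\bigr\}
\]
is discrete, so there is a minimal positive value $\hbar>0$, and any $J_{D_d}$-holomorphic disk $u:(D,\partial D)\to (B,L_B)$ with $\omega_B(u)>0$ satisfies $\omega_B(u)\geq \hbar$. The monotonicity lemma for $J$-holomorphic curves (applied uniformly over the compact family of $J_{D_d}$) then yields a constant $r_0>0$ independent of $d$ such that $u(D)$ contains points of $B$ whose distance to the complement of the image exceeds $r_0$; equivalently, $u(D)$ cannot be contained in any ball of radius $<r_0$. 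On the other hand, Donaldson's transversality estimate on $|s_d|/\|s_d\|_{C^0}$ forces the divisors $D_d$ to become $O(d^{-1/2})$-dense in $B$. For $d$ large enough that $d^{-1/2}\ll r_0$, every $J_{D_d}$-holomorphic disk of positive area must therefore meet $D_d$, as required.

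The main obstacle I anticipate is the coordination between the choice of $J_{D_d}$ and the monotonicity constant: a priori the latter depends on $J$, and we cannot allow it to degenerate as $d$ grows. The remedy just sketched — localizing the modification of $J_0$ to an ever smaller neighborhood of $D_d$ and then invoking monotonicity uniformly over this precompact family — is the key technical point that makes the density/monotonicity comparison succeed. A secondary difficulty is the relative Donaldson construction itself, ensuring simultaneously that $s_d$ is asymptotically holomorphic, has transverse zero set, and stays uniformly bounded below on $L_B$; this is where the rationality of $L_B$ and the totally real condition on Lagrangians enter crucially.
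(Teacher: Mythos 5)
The paper does not prove this statement; it is imported directly from Charest--Woodward (Lemmas 4.17--4.18 of \cite{CW1}) and Cieliebak--Mohnke (Section 8 of \cite{CM}), so there is no internal proof to compare against. Your first two steps --- the relative Donaldson/Auroux construction of asymptotically holomorphic divisors $D_d = s_d^{-1}(0)$ Poincar\'e dual to $d[\omega_B/2\pi]$ and disjoint from $L_B$, followed by the choice of a tame $J_{D_d}$ making $D_d$ almost complex --- are precisely the ingredients those references use, so the set-up is on target.

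The third step, however, has a genuine gap. Density of $D_d$ in $B$ does not force a two-real-dimensional holomorphic disk to meet a real-codimension-two submanifold, no matter how small $d^{-1/2}$ becomes relative to the diameter bound $r_0$. The complement $B\setminus D_d$ is an open set of full measure, and a holomorphic disk of arbitrarily large diameter can lie inside it without ever touching $D_d$: model $D_d$ locally as a union of parallel complex hyperplanes in a Darboux ball, spaced $\epsilon$ apart; a holomorphic disk sitting between two adjacent sheets never meets $D_d$, regardless of its size. Monotonicity gives a lower bound on how large the image must be, but says nothing about which codimension-two subsets that image must puncture. So the density/monotonicity comparison does not close the argument.

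The argument in \cite{CW1} and \cite{CM} is instead cohomological. Because $D_d\cap L_B=\emptyset$ and $L_B$ is rational, one arranges that $L_B$ is exact in $B\setminus D_d$ (there is a primitive $\theta$ of $\omega_B/2\pi$ on $B\setminus D_d$ with $\theta\vert_{L_B}=0$); a Stokes/residue computation then identifies the relative intersection number of a disk $u:(D^2,\partial D^2)\to(B,L_B)$ with $D_d$ as $\frac{d}{2\pi}\omega_B(u)$, which is strictly positive whenever $\omega_B(u)>0$. Since $D_d$ is $J_{D_d}$-almost complex, each geometric intersection of a $J_{D_d}$-holomorphic disk with $D_d$ contributes positively, so the positive topological count produces at least one actual intersection point. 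Rationality of $L_B$ enters here to make the energy-intersection identity consistent (so that $\frac{d}{2\pi}\omega_B(u)$ is an integer for every disk class, for $d$ in a suitable arithmetic progression), not to feed an energy-quantization constant into a density argument as your proposal does.
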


\subsubsection{In the fibered setting} \label{fibereddivisorssubsection}
As a reminder, we are working with the following setup:
\begin{definition}
A \emph{symplectic Mori fibration} is a fiber bundle of compact symplectic manifolds $(F,\omega_F)\rightarrow(E,\omega_{H,K})\xrightarrow{\pi} (B,\omega_B)$, whose transition maps are symplectomorphisms of the fibers, $(F,\omega_F)$ is monotone, and $(B,\omega_B)$ is rational. We put $\omega_{H,K}=a_H+K\pi^*\omega_B$ for large $K$ with $a_H$ the minimal coupling form associated to a connection $H$ with Hamiltonian holonomy around contractible loops.
\end{definition}
For a tamed almost complex structure $J_B$ on $(B,\omega_B)$, there exists a natural almost complex structure $\pi^*J_B$ on the connection bundle $H=TF^{\perp a}$ of $E$. We will denote this almost complex structure $J_B$ by abuse of notation.

Later in this paper, we will achieve transversality by using almost complex structures of the form 
\begin{equation}\label{uppertriangleform}J^{J_F}_{J_B}=\begin{bmatrix}
J_F & J_{ut} \\
0 & J_B
\end{bmatrix}
\end{equation} where the block decomposition is with respect to the connection $TF\oplus H$ on $E$ and $J_B$ resp.~$J_F$ are tamed in $B$ resp.~$F$.

Let $$J_D:=J^{J_F}_{J_{D_B}}$$ for some $J_{ut}$ and $J_F$. Let $L_B\subset B$ be a Lagrangian and $(D_B, J_{D_B})$ a stabilizing pair for $L_B$. For any choice of $J_F$ and $J_{ut}$, the pair $(\pi^{-1}(D_B),J_D)$ stabilizes $J_D$-holomorphic disks $u:(D,\partial D)\rightarrow (E,L)$ with $\pi\circ u\neq constant$. We use the following definition to refer to this situation:
\begin{definition}\label{pistabilizingdef}
A divisor $D$ is $\pi$-stabilizing for $L$ if it is the inverse image of a stabilizing divisor $D_B$ for $L_B$ in sense of Definition \ref{stabilizing divisor definition} (2):\\
There exists a tamed almost-complex structure $J_{D_B}\in \mathcal{J}(B,\omega_B)$ adapted to $D_B$ such that any $J_{D_B}$ holomorphic disk $u: (C,\partial C )\rightarrow (B,L_B)$ with $\omega_B(u) > 0$ intersects $D_B$ in at least one point.

For such a $(D,J_{D_B})$, we call $(D,J_D)$ a $\pi$-stabilizing pair if $J_D$ is induced from $J_{D_B}$ as in \eqref{uppertriangleform}.

\end{definition}

\subsection{Adapted Morse functions and pseudo-gradients}\label{pseudogradients} 

Part of the input data requires the choice of a Morse-Smale function and a Riemannian metric on $L$. It will be important later on that we choose the function so that it descends to a datum on $B$. We can construct a Morse function on $L$ by the following recipe: Take a Morse function $b$ on $L_B$. Take trivializations $\lbrace (U_i,\Psi_i) \rbrace$ of $L$ with the $U_i$ small neighbourhoods of the critical points $\lbrace x_i \rbrace$ for $b$. Let $\phi_i$ be a bump function equal to $\epsilon \ll1$ in a neighbourhood of each $x_i$ and $0$ outside $U_i$, and let $g_i$ be a Morse function for $L_F$ in the trivialization. The function 
$$f=\pi^* b + \sum_i\phi_i\Psi_i^* g_i=:\pi^*b+g$$ is a Morse function for $L$ with the property that its restriction to fibers near critical points of $b$ is also Morse. 

To ensure that critical points only occur in critical fibers of $f$, the $\epsilon$ can be made small enough so that the derivative of the bump function doesn't contribute significantly to the horizontal component of the flow. The Morse function can then be perturbed in small neighbourhoods outside of critical points to a Morse-Smale function. 

An approach to Morse theory that is well adapted to the fibration setting is that of a pseudo-gradient with details carried out in \cite[section 6.3]{hutchings}. The connection $H$ on $E$ induces a connection $ TL_F\oplus H_L$ on $L$ with
$$H_L:=TL\cap H$$
by the invariance of $L$ under parallel transport. Let us choose a Riemannian metric $G_B$ on $L_B$ and let $X_b:=\T{grad}_{G_B}b\in TL_B$. Assume that $G_B$ is given by the Euclidean metric in neighbourhood of critical points of $b$. The vector field $X_b$ has a horizontal lift to the connection on $TL$. Next, choose a metric $G_F$ on $L_F$ and denote $X_{g}:=\grad_{G_F}g$. We assume that $G_F$ agrees with the pullback of the Euclidean metric in a neighbourhood of critical points of $g$. We will show that 
$$X_g\oplus X_b\in TL_F\oplus H_L$$
has the property of a \emph{pseudo-gradient} for the Morse function $f$ with respect to the metric $G_F\oplus G_B$. The formal definition is:

\begin{definition}
Let $f$ be a Morse function on a Riemannian manifold $(M,G)$. A \emph{pseudo-gradient} for $f$ is a vector field $X$ such that 
\begin{enumerate}
\item $X_p=0$ for $p\in \T{crit}(f)$,
\item \label{par} $G(\T{grad}_G(f),X)\leq 0$ with equality only at critical points of $f$, and
\item \label{local} in a Morse chart for $f$ centered at $p\in \T{Crit}(f)$, $\T{grad}_ef=X$ where $e$ is the standard Euclidean metric.
\end{enumerate}
\end{definition}

We check that our construction satisfies the pseudo-gradient property: Let $G:=G_F\oplus G_B$, where we also denote $\pi^*G_B$ by $G_B$ when there is no confusion. Take a local coordinate chart with $g_i$ and $\phi_i$ the cutoff function. From the construction of $f$, we have $$\T{grad}_Gf= \phi_i\cdot\T{grad}_{G_F}g_i\oplus (\T{grad}_{G_B}b+g_i\cdot\T{grad}_{G_B}\phi_i).$$
By the Morse lemma and the local Euclidean property of the metric $G_B$, there are coordinates centered at $p\in \T{Crit}(b)$ so that $G_B$ is Euclidean and $$b(x)=\pm x_1^2\pm\cdots \pm x_n^2.$$ In this chart, we can assume that $\phi_i$ is a bump function that is radial, with $$\Vert g_i\Vert \vert \partial_j\phi_i\vert <2\vert x_j\vert $$ and such that $d\phi_i$ has annular support. It is clear that $G(\T{grad}_{G_B}b+g_i\cdot\T{grad}_{G_B}\phi_i,X_b)\leq 0$ in these coordinates (with equality as in property \eqref{par}). It is also clear that $G(\T{grad}_{G_B}b+g_i\cdot\T{grad}_{G_B}\phi_i,X_b)<0$ outside of these coordinates where $\phi_i$ vanishes. Moreover, we have that $G(\phi_i\cdot\T{grad}_{G_F}g_i, X_g)\leq 0$ with equality only at critical points of $g_i$ and outside the support of $\phi_i$. The locality property \eqref{local} follows from the Euclidean-near-critical-points property of $G_F\oplus G_B$ and the Morse lemma for $b$ and $g$.

Let $\gamma_X(t,x)$ denote the flow of a pseudo-gradient $X$ starting at $x\in L$. For a critical point $x_i$ of $X$, define the \emph{stable} (+) resp.~\emph{unstable} (-) manifold of $x_i$ as
$$W^\pm_X(x_i):=\left\{ x\vert \lim_{t\rightarrow \pm\infty}\gamma_X(t,x)=x_i \right\}.$$ 
Define the \emph{index} of a critical point
\begin{equation}\I(x_i):=\dim W^+_X(x_i).
\end{equation}

\begin{remark}
The reader may be used to defining the index as $\dim W^-_X(x_i)$. However, the difference in definition is resolved with a change of sign.
\end{remark}

The \emph{Smale condition} for a pseudo-gradient (or a Morse function) stipulates that we have 
$W^+_X(x_i)\pitchfork W^-_X(x_j)$, and thus $W^+_X(x_i)\cap W^-_X(x_j)$ is a smooth manifold of dimension $\I(x_i)-\I(x_j)$.

Morse theory for pseudo-gradients is spelled out in Audin-Damian \cite[Ch. 2]{audin}. In particular, the Smale condition can be achieved by perturbing the pseudo gradient $X:=X_g\oplus X_b$ in finitely many neighbourhoods outside of critical points.

An important feature of the pseudo-gradient approach is that any flow line $\gamma$ for $X$ on $L$ projects to a flow line $\pi\circ \gamma$ for $X_B$ in $L_B$.

From here on, we use the terms "Morse flow" or "Morse trajectory" to describe a flow of any pseudo-gradient that is compatible with our Morse function.

\subsection{Varying the connection on $E$}\ This section is a treatment of Hamiltonian perturbations of the connection, with analogy to the case of bundles over surfaces \cite{gromov, salamonakveld}\cite[Ch. 8]{ms2}.

Let $C^k(E,F_0,\bb{R})$ be the Banach space of smooth functions $\Xi$ on $E$ such that 
$$\int_{F_p} \iota_p^*\Xi a^{\frac{n}{2}}=0$$
where $\iota_p:F_p\hookrightarrow E$ is inclusion of the fiber over $p\in B$. We refer to this as the \emph{zero average condition}. Since $a\vert_{F_p}=\omega_{F_p}$, it does not depend on the choice of connection form. 

We denote by $$\ham^{l,k}(F,B):=\Lambda_l^{1}(B,C^k(E,F_0,\bb{R}))$$ as the Banach space of class $C^l$ $1$-forms on $B$ with values in the fiberwise-zero-average $C^k$ functions (we will suppress the $l,k$ unless important). A \emph{fibered Hamiltonian perturbation} is a choice of such a 1-form $\sigma$. Such a form determines a form on $E$ by pullback that we also denote by $\sigma$. We can use it to perturb the minimal coupling form to
$$a_{\sigma}:=a-d\sigma.$$
By the Weil formula we have
$$d\sigma(v,w):=\iota_v d(\sigma(w))-\iota_w d(\sigma(v))-\sigma([v,w]),$$
so that $d\sigma$ vanishes on $TF\wedge TF$. Hence, $a_\sigma\vert_{F_p}=a\vert_{F_p}$ and $a_\sigma$ is a connection form. The horizontal distribution
$$H_\sigma:=\{ v\in TE: a_\sigma(v,w)=0, \forall w\in TF\}$$
written in the splitting $TF\oplus H_0$ is the set of pairs $$\{(X_{\sigma(v)},v)\}$$ where $X_{\sigma(v)}$ is the Hamiltonian vector field in $F$ for $\sigma (v)$. To see this, take $w\oplus v\in TF\oplus H_0$ with $a(w\oplus v,t)-d\sigma(w\oplus v,t)=0$~$\forall t\in TF$. Note that the Lie bracket of a vertical and horizontal vector field is a vertical vector field, so again by the Weil formula we have $d\sigma (w\oplus v,t)=-\iota_t d(\sigma (w\oplus v))=-\iota_t d(\sigma (v))$. Thus,
$$a(w\oplus v,t)+\iota_t d(\sigma(v))=0$$
for all $t\in TF$. Since $a(v,t)=0$, we have
$$\omega_F(w,\cdot)+d(\sigma (v))=0$$
on $F$. Hence, $w=X_{\sigma(v)}$.

\begin{definition}
Define the \emph{curvature} of $H_\sigma$ to be the function-valued 2-form $\kappa\in \Lambda^2(B,C^\infty_0(F,\bb{R})):$
$$\kappa_\sigma(v,w):= a_\sigma(v^{\sharp\sigma},w^{\sharp\sigma})$$
where $v^{\sharp \sigma}$ is the horizontal lift to $H_\sigma$.
\end{definition}
By the curvature identity \eqref{curvatureidentity} and the normalizing condition on $a_\sigma$, $\kappa_\sigma(v,w)$ is actually the unique zero-average Hamiltonian on the fibers associated to $[v^{\sharp\sigma},w^{\sharp\sigma}]^{vert}$.

Suppose we pick a fiberwise taming almost complex structure 
$$J_F: B\rightarrow \mc{J}^l(TF,a\vert_{F}).$$
Let $$C^l(B,\mc{J}^l(TF,a\vert_F))$$ denote the space of such $C^l$ sections. Choose a tamed almost complex structure $J_B$ on the base. 
\begin{lemma}[{\cite[Lemma 8.2.8]{ms2}}]\label{uniqueacslemma} There is a unique almost complex structure $J_{\sigma,{J_B}}^{J_F}$ on $E$ such that
\begin{enumerate}
\item\label{projectionholo} $\pi:(E,J_{\sigma,{J_B}}^{J_F})\rightarrow (B,J_B)$ is holomorphic,
\item\label{agreeswithvert} $J_{\sigma,{J_B}}^{J_F}\vert_{TF}$ agrees with $J_F$, and
\item\label{preserveshorizontal} $J_{\sigma,{J_B}}^{J_F}(H_\sigma)=H_\sigma$.
\end{enumerate}
\end{lemma}
The proof is left as obvious in \cite{ms2}, so we will do the same.

In a trivialization, such an almost complex structure must be upper triangular of the form \eqref{uppertriangleform} by points \eqref{projectionholo} and \eqref{preserveshorizontal}. If we use the description of $H_\sigma$ in the $H_0$ splitting from earlier in this section,  we can express the almost complex structure $J_{\sigma,{J_B}}^{J_F}$ as
\begin{equation}\label{connectionacs}
J_{\sigma,{J_B}}^{J_F}:=\begin{bmatrix}
J_F & X_{\sigma}\circ J_B-J_F\circ X_{\sigma}\\
0 & J_B
\end{bmatrix}
\end{equation}
relative to $H_0$ for choices of $J_F$, $J_B$, and $\sigma$. Often we will suppress $J_F$ and $J_B$ and simply write $J_\sigma$ to denote an almost complex structure satisfying the above properties.

\begin{lemma}\label{tamingconditionlemma}
Suppose $J_F$ is tamed by $\omega_F$, $J_B$ is tamed by $\omega_B$, and $J_B$, $K\gg 1$ satisfy
\begin{equation}\label{taminginequality}
\vert \kappa_\sigma(v,J_B v)\vert< K\omega_B(v,J_B v)
\end{equation}
for all $v\in TB$. Then $J_{\sigma,{J_B}}^{J_F}$ is tamed by $a_\sigma+K\pi^*\omega_B$.
\end{lemma}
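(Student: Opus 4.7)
The plan is to verify the taming inequality $(a_\sigma + K\pi^*\omega_B)(X, J_\sigma X) > 0$ for every nonzero $X \in TE$ by direct computation using the splitting $TE = TF \oplus H_\sigma$.

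First, I would decompose an arbitrary tangent vector as $X = X_v + v^{\sharp\sigma}$, where $X_v \in TF$ and $v \in TB$, and compute $J_\sigma X = J_F X_v + (J_B v)^{\sharp\sigma}$. The second equality uses the three characterizing properties of $J_\sigma$ listed after \eqref{connectionacs}: since $\pi$ is $(J_\sigma, J_B)$-holomorphic and $J_\sigma$ preserves $H_\sigma$, the horizontal lift $v^{\sharp\sigma}$ is sent to the horizontal lift of $J_B v$.

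Next, I would expand $a_\sigma(X, J_\sigma X)$ by bilinearity into four terms and observe that the cross terms $a_\sigma(X_v, (J_B v)^{\sharp \sigma})$ and $a_\sigma(v^{\sharp \sigma}, J_F X_v)$ vanish, since these pair a vertical vector with a vector in $H_\sigma$ (which is exactly the $a_\sigma$-orthogonal complement of $TF$ by the definition of $H_\sigma$). The purely vertical term contributes $\omega_F(X_v, J_F X_v)$ since $a_\sigma|_{F_p} = \omega_F$, and the purely horizontal term is $\kappa_\sigma(v, J_B v)$ by the definition of the curvature. Meanwhile, $\pi^*\omega_B(X, J_\sigma X) = \omega_B(v, J_B v)$, since $\pi_*$ kills the vertical parts. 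Assembling these pieces gives
\begin{equation*}
(a_\sigma + K\pi^*\omega_B)(X, J_\sigma X) = \omega_F(X_v, J_F X_v) + \kappa_\sigma(v, J_B v) + K\omega_B(v, J_B v).
\end{equation*}

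Finally, I would invoke the hypotheses. The first summand is non-negative and vanishes only if $X_v = 0$ (taming of $J_F$ by $\omega_F$). The inequality \eqref{taminginequality} rewrites the last two summands as strictly positive whenever $v \neq 0$, and they are zero when $v = 0$. Since $X \neq 0$ forces either $X_v \neq 0$ or $v \neq 0$, at least one of the two contributions is strictly positive and the other is non-negative, giving strict positivity of the sum. I do not anticipate any real obstacle here; the only subtlety worth spelling out carefully is the vanishing of the cross terms, which relies on keeping straight that the horizontal distribution in use is $H_\sigma$ (not the original $H_0$) and that $a_\sigma$, not $a$, is the relevant connection form.
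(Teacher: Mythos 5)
Your proof is correct and follows essentially the same route as the paper's: both reduce to checking taming separately on $TF$ and $H_\sigma$ after establishing $J_\sigma v^{\sharp\sigma} = (J_B v)^{\sharp\sigma}$, and then using the hypotheses on the two blocks. You are somewhat more explicit than the paper about the vanishing of the cross terms $a_\sigma(X_v,(J_Bv)^{\sharp\sigma})$ and about assembling the mixed case $X_v\neq 0\neq v$, which is a useful clarification of a step the paper compresses, but it is not a genuinely different argument.
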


\begin{proof}
As $H_0$ is the reference connection, we use it for the computation. One checks that
$$J_\sigma(X_{\sigma(v)},v)= (X_{\sigma( J_B v)},J_B v)$$ 
so that
$$J_\sigma v^{\sharp\sigma}=(J_Bv)^{\sharp\sigma}.$$
Hence, $J_\sigma$ is tamed by $a_\sigma+K\pi^*\omega$ on the horizontal distribution by the assumption. As $J_\sigma\vert_{TF}=J_F$ and $a_\sigma\vert_{TF}=\omega_F$, this proves the lemma. 
\end{proof}
\begin{remark}\label{tamingopenremark}
In applications, we will typically want to fix $K$ and use $\sigma$ to achieve transversality for vertically constant disks. Since the connection $H_0$ is assumed to have Hamiltonian holonomy (around contractible loops), it suffices to pick such a $K$ for $H_0$ and $J_B$ and to observe that
$$\vert \kappa_\sigma\vert <K\omega_B$$
is an open condition about $\sigma=0\in \ham^{l,k}(F,B)$. 
\end{remark}

\subsection{Fibered Perturbation Data}\label{fiberedperturbationsubsection}
We can now define the type of perturbation data that we will use to achieve transversality. Fix a pseudo-gradient for $X_f:=X_g\oplus X_b$ for the Morse function $f=g+\pi^*b$ on the fibered Lagrangian $L$. Fix a stabilizing pair $(D_B,J_{D_B})$ for $J_B$ as in section \ref{divisorsection}. Note that by property \eqref{projectionholo} in Lemma \ref{uniqueacslemma},  $(D,J_{\sigma,J_{D_B}}^{J_F})$ is a $\pi$-stabilizing pair in the sense of Definition \ref{pistabilizingdef}. To compress notation, we denote the later almost complex structure by $J_{\sigma, D}^{J_F}$ or simply by $J_D$ when we are referring to a stabilizing $J_D$ that is the output of Lemma \ref{uniqueacslemma}.

To setup the perturbation ``scheme", we first choose neighbourhoods in the universal curve on which the datum will be constant. For a $\pi$-stable type $\g$, define the \emph{projected universal curve}  $$\Upsilon\pi_*\mc{U}_\g$$ as universal curve corresponding to the $\pi$-stabilization of $\pi_*\g$. Let $\tilde{\mc{S}}_\g\subset\Upsilon\pi_*\mc{U}_\g$ be the two-dimensional part of each fiber, and let $\mc{T}_\g\subset\mc{U}_\g$ the one-dimensional part of each un-stabilized fiber. Fix a compact set
$$\tilde{\mc{S}}_\g^o \subset \tilde{\mc{S}}_\g$$
in the projected universal curve not intersecting the boundary or special points of each surface component and having non-empty interior in every fiber. Also fix a compact set
$$\mc{T}_\g^o \subset \mc{T}_\g$$
whose intersection with each universal fiber has non-empty interior. 

The complement 
$$\tilde{\mathcal{S}}_\g- \tilde{\mathcal{S}}_\g^o$$
resp.
$$\mathcal{T}_\g-\mathcal{T}_\g^o$$
is a neighbourhood of the boundary and special points on $\Upsilon\pi_*\mc{U}_\g$  resp.~neighbourhood of $\infty$ on edges in each fiber of the universal curve.

The following setup is repeated in \cite{fiberedpotential}: For each critical fiber $F_i$ of the function $\pi^*b$, choose a tamed almost complex structure $J_F^i$ and let 
$$\mc{J}^{l,vert}(J_F^1,\dots,J_F^k):=\left\{J_F\in C^l(B,\mc{J}^l(TF,a\vert_F)): J_F\vert_{F_i}=J_F^i\right\}$$ be the space of sections of $\mc{J}^l(TF,a\vert_{F})\rightarrow B$ that agree with the chosen $J_F^i$. Evaluation of general section at each fiber is a regular map, so the above space is a Banach manifold. We compress notation by letting $\underline{J_F}$ denote the chosen tuple of tamed fiberwise almost complex structures on the critical fibers. Let 
\begin{align}\label{totalspaceperturbationdata}
\mc{JH}^l(E,\omega_{H_0,K}):= \bigg\lbrace (J_F,J_B,\sigma)\in \mc{J}^{l,vert}(\underline{J_F})\times \mc{J}^l(B,\omega_B) \times \ham^{l,l} (F,B):\\ J_{\sigma,J_B}^{J_F} \text{ satisfies the premise of Lemma \ref{tamingconditionlemma} }\bigg\rbrace . \nonumber
\end{align}
By Lemma \ref{tamingconditionlemma} an element is tamed by $\omega_{H_\sigma,K}$. By the subsequent Remark \ref{tamingopenremark} this is an open subset of a product of Banach manifolds, and thus it is itself a Banach manifold.

\begin{definition}\label{perturbationdatadef} Fix a stabilizing pair $(D_B,J_{D_B})$ for $L_B$. Let $\g$ be a possibly broken combinatorial type of treed disk whose unbroken pieces are $\pi$-stable. A class $C^l$ \emph{fibered perturbation datum} for the type $\g$ is a choice of $\underline{J_F}$ together with piecewise $C^l$ maps $$(J_F,J_B,\sigma):\Upsilon\pi_*\mathcal{U}_\g\rightarrow \mathcal{JH}^l(E,\omega_{H_0,K})$$ 
and
$$X:\mc{T}_\g\rightarrow \t{Vect}^l(TL_F)\oplus \t{Vect}^l(H_L),$$
denoted $P_\g$, that satisfy the following properties:
\begin{enumerate}
\item We have $\sigma\equiv 0$ and $J_B\equiv J_{D_B}$ on the neighbourhoods $\tilde{\mathcal{S}}_\g- \tilde{\mathcal{S}}_\g^o$,
\item $\sigma\equiv 0$, $J_B\equiv J_{D_B}$ on $\Upsilon\pi_*\mc{T}_\g$, and

\item $X\equiv X_f$ in the neighbourhoods $\mc{T}_\g-\mc{T}_\g^o$ of $\infty$ on each unbroken piece.
\end{enumerate}

Let $\mc{P}^l_\g(E,D)$ denote the Banach manifold of all class $C^l$ fibered perturbation data for a fixed $J_{D_B}$.
\end{definition}

\begin{remark}\label{perturbationdatumonuniv} Any perturbation datum $P_\g$ lifts to the universal curve via pullback $\tilde{P}_\g:=(\Upsilon\pi_*)^* P_\g$. As such, we will only distinguish between $\tilde{P}_\g$ and $P_\g$ when necessary.
\end{remark}
\begin{definition}
A perturbation datum for a collection of combinatorial types $\gamma$ is a family $\underline{P}:=(P_\g)_{\g\in\gamma}$.
\end{definition}

\subsubsection{Holomorphic treed disks}
We can now define what it means to be a pseudo-holomorphic configuration.
\begin{definition}
A \emph{Morse labelling} of an combinatorial type $\g$ is a labelling of the outermost unbroken boundary semi-infinite edges $\e_{\rightarrow}^\circ(\g)$ or bi-infinite edges by $(x_0,\dots,x_n)\in\crit (f)^{n+1}$, denoted $\underline{x}$ for shorthand, such that the root is labelled with $x_0$. We denote a type together with a labelling by $(\g,\underline{x})$.

\end{definition}

\begin{definition} \label{holotreeddisk} Given a fibered perturbation datum $P_\g$ for a type $\g$ with $\pi$-stable pieces, a $P_\g$-\emph{holomorphic configuration} in $E$ with boundary in $L$ consists of a fiber $C=S\cup T$ of the universal curve $\mathcal{U}_\g$, where $S$ resp.~$T$ is the surface resp.~tree part, together with a continuous map $u:C\rightarrow E$ that satisfies the following properties:
\begin{enumerate}
\item $u(\partial S\cup T)\subset L$.
\item On $S$ the map $u$ is $P_\g$-holomorphic for the given perturbation datum: If $j$ denotes the complex structure on $S$, then
\begin{equation*}
J_{\sigma,J_B}^{J_F}\mathrm{d}u|_S = \mathrm{d}u|_S j
\end{equation*}
where $(J_F,J_B,\sigma)\in \mc{JH}^l$ are the values of $P_\g$ on $S$.
\item On $T$, $u$ is a collection of flows for the vector field part of $P_\g$:
\begin{equation*}
\frac{d}{ds}u|_T = X(u|_T)
\end{equation*}
where $s$ is a local coordinate with unit speed so that for unbroken edge piece $e\in\e^\circ(\g)$, we have $e\cong [0,\ell(e)]$, $e\cong [a,\infty)$, or $e_0\cong (-\infty,b]$ via $s$.
\item If $e_{i}$ for $0\leq i\leq n$ denote the edges Morse labelled by $x_i$, then
\begin{align*}
&u(e_i)\in W^-_X(x_i) ~\text{and}\\
&u(e_0)\in W^+_X(x_0).
\end{align*}
\end{enumerate}
\end{definition}
When $\underline{x}=(x_0,x_1)$, we will refer to the above as a $P_\g$-\emph{pearly Morse trajectory} or a $P_\g$-\emph{Floer trajectory}.

For an interior marked point $z\in C$ contained in a contractible neighbourhood $U_z$, let $r:S^1\rightarrow U_z$ be a simple closed curve that has winding number $1$ in the complement of $z$. Let $\mc{N}D$ be the normal bundle of $D$ in $E$ and identify $D$ with the zero section. For a holomorphic configuration $u$, suppose $u(U_z)\subset \mc{N}D$ and $u\vert_{U_z}^{-1}(D)=z$. We call the winding number of $u\circ r$ around $D \subset \mc{N}D$ the \emph{divisor intersection multiplicity} of $u$ at $z$ and denote it by $m_z(u)$.
\begin{definition}
We say that a holomorphic configuration $u$ has combinatorial type $\g$ if 
\begin{enumerate}
\item $u_*[D_v]=d_\g(v)$ and $u_*[S_v]=s_\g(v)$ where $[D_v]$ resp.~$[S_v]$ is the fundamental class of the surface component identified with $v$, and
\item \label{multdef} if $m_\g(e)>0$ $u$ maps $z\sim e\in \e^\bullet(\g)$ to the divisor in an isolated way and $m_z(u)=m_\g(e)$.
\end{enumerate}

\end{definition}
\begin{remark}\label{tangencyremark}
We give brief discussion of the relationship between the intersection multiplicity and the so called ``order of tangency" from \cite[section 6]{CM}. Let $$u:\bb{C}\supset U\rightarrow \bb{C}^n$$ be a smooth function with components $u_q$ with respect to some complex basis and $(s,t)$ coordinates on $D$. Let $d^{i,j}u(z)\in \bb{C}^n$ for $i,j\geq 0$ be the vector consisting of the partial derivatives $\dfrac{\partial^{i+j}u_q(z)}{\partial^i s\partial^j t}$. Suppose there is a $J$ on $\bb{C}^n$ that preserves some $\bb{C}^k$ and that $u$ is $J$ holomorphic. Corollary 6.3 in \cite{CM} shows that if $d^{i,j}u\in \bb{C}^k$ for $\forall i+j<l$ then 
$$[\dfrac{\partial^l u}{\partial^l s}]\in \bb{C}^n/\bb{C}^k$$
is well defined and obeys the chain rule with respect to changes of coordinates that preserve $\bb{C}^k$, and moreover if $[\dfrac{\partial^l u}{\partial^l s}]=0$ then $d^{i+j}u\in \bb{C}^k$ for $i+j=l$. One can define the \emph{order of tangency} of $u$ at $z\sim e\in \e^\bullet_\rightarrow$ to $D$ by first taking adapted coordinates that send $D$ to $\bb{C}^{n-1}\subset \bb{C}^n$ and setting
$$tan_z(u)=\max_{l: \forall i+j=l} d^{i,j} u(z)\in \bb{C}^{n-1}.$$
Proposition 7.1 from \cite{CM} shows the divisor intersection multiplicity is related to the order of tangency via
$$tan_z(u)+1=m_z(u).$$ 
In particular, $m_z(u)\geq 0$.
\end{remark}

\subsubsection{Coherence}
The operations on the moduli of treed disks discussed at the end of section \ref{modulireviewsection} induce operations on perturbation data, and it is important for the proof of compactness that the perturbation data obey a set of coherence conditions. See
\begin{definition}{cf {\cite[Definition 4.12]{CW2}}} \label{coherentdefinition} A fibered perturbation datum $\underline{P}=(P_\Gamma)_{\g\in\gamma}$ is \emph{coherent} if it is compatible with the morphisms on the moduli space of treed disks. More precisely,
\begin{enumerate}
\item\emph{(Cutting edges axiom)} if $\Pi:\g'\rightarrow \g$ cuts any edge of infinite length, then $P_{\g}=\Pi_* P_{\g'}$, and
\item\emph{(Product axiom)} if $\g$ is the union of types $\g_1, \g_2$ obtained from cutting an edge of $\g$, then $P_\g$ is obtained from $P_{\g_1}$ and $P_{\g_2}$ as follows: Let $\pi_k:\mf{M}_\g\cong \mf{M}_{\g_1}\times\mf{M}_{\g_2}\rightarrow\mf{M}_{\g_k}$ 
denote the projection onto the $k^{th}$ factor, so that $\mc{U}_\g$ is the unions of $\pi_{1}^{*}\mathcal{U}_{\g_1}$ and $\pi_{2}^{*}\mathcal{U}_{\g_2}$. We require that $P_\g$ is equal to the pullback of $P_{\g_k}$ on $\pi_{k}^*\mathcal{U}_{\g_k}$.
\item\emph{(Collapsing an edge/making an edge finite or non-zero axiom)} If $\Pi:\g\rightarrow \g'$ collapses a zero length boundary node or an interior node, or makes an edge length finite/non-zero, then $P_{\g}=\Pi^*P_{\g'}$. 
\item \emph{(Ghost-marking independence)} Finally, if $\Pi:\g'\rightarrow \g$ forgets a marking on a vertex $v$ with $[v]=0$ and stabilizes then $P_{\g'}=\Pi^* P_{\g}$.
\end{enumerate}
\end{definition}

\section{Transversality}\label{transection}

For this section, we fix a stabilizing pair $(D_B,J_{D_B})$ as in Definition \ref{stabilizing divisor definition} and let $(D,J_D)$ be the induced $\pi$-stabilizing pair for some vertical almost complex structure that we will determine later. For $(\g,\underline{x})$ with $\pi$-stable unbroken pieces, choose a fibered perturbation data $P_\g\in\mc{P}^l_\g(E,D)$. Let $C_\g$ be a fiber of the universal disk and $\tilde{C}_\g$ a choice of representative. Define
\begin{align*}
\tilde{\mc{M}}_\g(x_0,\dots,x_n,P_\g):=\left\{u:\tilde{C}_\g\rightarrow E: u\t{ is a $P_\g$-holomorphic config. for $L$}\right\}
\end{align*}
as the moduli of $P_\g$-holomorphic configurations with boundary in $L$. Let $\sim$ be the equivalence relation on representatives of $C_\g$ generated by the action of biholomorphisms of marked surfaces on the surface components $\mc{D}_T$. We get an equivalence relation on Floer configurations, as the perturbation datum is constant on marked surface components by Remark \ref{perturbationdatumonuniv}. Define
\begin{equation}
\mc{M}_\g(\underline{x},P_\g)=\tilde{\mc{M}}_\g(\underline{x},P_\g)/\sim
\end{equation}
to be the moduli of classes of Floer trajectories of type $\g$. To obtain transversality, we restrict to a smaller class of trajectories:
\begin{definition}\label{adapteddef}
A Floer trajectory class $u:C_\g\rightarrow E$ based on $\g$ with $\pi$-stable unbroken pieces is called \emph{adapted} to $D$ if
\begin{enumerate}
\item(Stable domain) The geometric realization of $\Upsilon(\pi_*\g)$ is a stable domain;
\item(Non-constant spheres)\label{spheresaxiom} Each component of $C$ that maps entirely to $D$ is horizontally constant;
\item(Markings)\label{markingsaxiom} Each interior marking $z_i$ maps to $D$ and each component of $u^{-1}(D)$ contains an interior marking.
\end{enumerate}
Denote the set of adapted Floer trajectories classes by 
$$\mc{M}_\g(\underline{x},P_\g,D).$$
\end{definition}
This definition is the same as saying that $\pi\circ u$ is adapted to $D_B$ in the sense of \cite[Definition 4.17]{CW2}.

The expected dimension of $\mc{M}_\g(\underline{x},P_\g,D)$ is given by the \emph{index of the configuration},
\begin{align}\label{indexdefinition}
\I (\g,\underline{x}):= \mathrm{dim}W^+_{X}(x_0)-\sum_{i=1}^{n}\mathrm{dim}W^+_{X}(x_i) + \sum_{i=1}^{m} I(u_i)+n-2 -|\e^{\circ,0}_-(\g)|\nonumber & \\-|\e_-^{\circ,\infty}(\g)|-2|\e^\bullet_-(\g)| +2|\e^\bullet_\rightarrow(\g)|-2\sum_{e\in \e^{\bullet}_\rightarrow} m_\g(e) -2\sum_{e\in \e^{\bullet}_-} m_\g(e) &,
\end{align}
which is the index of a linearized $\bar{\partial}$ operator on the space of $W^{k,p}$ (Sobolev) maps based on the domain with the proper constraints at nodes and interior markings. Here, $I (u_i)$ is the Maslov index of $u$ restricted to a surface component and $W^+_X(x_i)$ is the stable manifold of $x_i$ as defined in section \ref{pseudogradients}. 
\begin{definition}\label{regulardatadefinition}
A perturbation datum $P_\g\in \mc{P}_\g^l(E,D)$ is called \emph{regular} if the moduli space $\mc{M}_\g(\underline{x},P_\g,D)$ is cut out transversely as the zero set of an associated Cauchy-Riemann operator between appropriate Sobolev spaces, and hence is a smooth manifold of dimension $\I(\g,\underline{x})$.
\end{definition}
A combinatorial type $\g$ is called \emph{uncrowded} if each $v\in V(\g)$ with $[v]=0$ (a \emph{ghost component}) has a most one interior marking $e\in\e^\bullet_\rightarrow(\g,v)$.

There is a partial ordering on combinatorial types: we say that $$\g'\leq \g$$ if and only if there is a morphism $$\Pi:\g\rightarrow \g'$$
that decomposes as a composition of \emph{collapsing an edge} or \emph{making an edge length finite/non-zero}. In practice, a configuration of type $\g$ will be the Gromov-Floer limit of configurations of type $\g'$, so the coherence axioms are in place so that we can prove a compactness result later on.
The partial order is bounded below by the type with only one vertex, although this configuration will have very high index.

\begin{theorem}[Transversality] \label{transversality}
Let $E$ be a symplectic Mori fibration, $L$ a fibered Lagrangian. Let $\gamma$ be a finite indexing set and suppose we have a finite collection of uncrowded, $\pi$-adapted, and possibly broken types $\lbrace\g\rbrace_{\g\in \gamma}$ with $$\I(\g,\underline{x})\leq 1.$$ 
Then there is a comeager subset of smooth regular data for each type $$\mc{P}^{\infty,reg}_\g(E,D)\subset\mc{P}^\infty_\g(E,D)$$
and a selection in the product space 
\begin{equation*}(P_\g)_\gamma\in \underset{\g\in \gamma}{\mbox{\huge $\times$}} \mc{P}^{\infty,reg}_\g
\end{equation*} that forms a regular, coherent datum. Moreover, we have the following results about tubular neighbourhoods and orientations:

\begin{enumerate}
\item\label{tubularneighborhoods}\emph{(Gluing)} If $\Pi:\g\rightarrow\g'$ collapses an edge or makes an edge finite/non-zero, then there is an embedding of a tubular neighbourhood of $\mc{M}_\g (\underline{x},P_\g,D)$ into $\overline{\mc{M}}_{\g'}(\underline{x},P_{\g'},D)$, and
\item \emph{(Orientations)} if $\Pi:\g\rightarrow\g'$ is as in \ref{tubularneighborhoods} and $L$ is relatively spin, then the inclusion $\mathcal{M}_{\g} (\underline{x},P_{\g},D)\rightarrow \overline{\mathcal{M}}_{\g'} (\underline{x},P_\g,D)$ gives an orientation on $\mc{M}_\g$ after choosing an orientation for $\mc{M}_{\g'}$ and the outward normal direction on the boundary. 
\end{enumerate}
\end{theorem}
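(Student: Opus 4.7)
The plan is to follow the Sard--Smale scheme for Floer-type moduli spaces, adapted to the fibered setting by the two-part Hamiltonian-plus-$J_F$ perturbation framework of Section \ref{fiberedperturbationsubsection}. The argument breaks into five stages: (i) form the universal moduli space over $\mc{P}^l_\g(E,D)$; (ii) prove surjectivity of the linearized operator on it; (iii) extract a smooth comeager regular set via Sard--Smale plus a Taubes-style trick; (iv) select a coherent datum by induction over the partial order on combinatorial types; and (v) carry out the gluing and orientation analysis.

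For (i)--(ii), fix a $\pi$-stable $(\g,\underline{x})$ and consider the universal moduli space
\[
\mc{M}^{univ}_\g(\underline{x},D)=\{(u,P_\g):u\in\mc{M}_\g(\underline{x},P_\g,D),\ P_\g\in\mc{P}^l_\g(E,D)\}.
\]
At each $(u,P_\g)$ the linearized operator splits via the reference connection $TF\oplus H_0$ into a horizontal piece controlling $\pi\circ u$ and a vertical piece controlling the fiber motion. On components where $\pi\circ u$ is non-constant, surjectivity of the horizontal piece follows from the base transversality argument of Charest--Woodward--Cieliebak--Mohnke, since Definition \ref{perturbationdatadef} leaves $J_B$ free on $\tilde{\mc{S}}_\g^o\setminus D_B$. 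Surjectivity of the vertical piece is the crux: perturbations of $J_F$ and of $\sigma\in\ham^l(F,B)$ move the antilinear block $J_F X_\sigma-X_\sigma J_B$ of equation \ref{connectionacs}, and a standard density/unique-continuation argument, using the zero-average condition and positivity of intersection with $D=\pi^{-1}(D_B)$, shows the image is dense. For components where $\pi\circ u$ is vertically constant at some $p\in B$, the same mechanism works when $p$ is non-critical for $b$; at a critical fiber $F_{x_i}$ the preselected $J_F^i$ must be chosen regular a priori, which is possible because $F$ is monotone.

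Stage (iii) applies Sard--Smale to the projection $\mc{M}^{univ}_\g\to\mc{P}^l_\g$ and then intersects the resulting comeager sets across $l\geq l_0$ to obtain $\mc{P}^{\infty,reg}_\g$. Stage (iv), the coherent selection, is delicate bookkeeping. Order the finite collection of types by the relation generated by the coherence morphisms of Definition \ref{coherentdefinition}, and work inductively from minimal strata outward. For each $\g$, first choose base data on $\Upsilon(\pi_*\g)$ following \cite{CW2}, then set $P_\g=\Omega_\g^*P_{\Upsilon(\pi_*\g)}$ on the corresponding pullback direction to satisfy the $\pi$-stabilization axiom, and finally perturb $\sigma$ generically on the interiors $\tilde{\mc{S}}_\g^o$ and $\mc{T}_\g^o$ while keeping pulled-back data fixed on neighborhoods of the boundary strata. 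The cutting, collapsing, product, and ghost-marking axioms then hold automatically; non-emptiness of the intersection with $\mc{P}^{\infty,reg}_\g$ uses that the $\sigma$-direction carries infinite-dimensional slack and that pullback along $\Omega_\g$ preserves genericity.

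Stage (v) is the Floer--Hofer gluing template: the uniform right inverse from (ii) drives a Picard iteration producing the embedded tubular neighborhood $\mc{M}_\g(P_\g,D)\times[0,\epsilon)\hookrightarrow\overline{\mc{M}}_{\g'}(P_{\g'},D)$ for each codimension-one morphism $\Pi:\g\to\g'$, and coherent orientations come from a trivialization of the determinant line of the linearized operator together with the outward-normal convention. The main obstacle throughout is the vertical linearization at vertically constant components: the block form of \ref{connectionacs} is naively block-diagonal, so $J_F$ alone does not touch the vertical Cauchy--Riemann equation, but Hamiltonian perturbations $\sigma$ enter precisely through the antilinear off-diagonal block $J_F X_\sigma-X_\sigma J_B$. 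This is the total-space analogue of the surface-level Hamiltonian transversality mechanism of \cite{salamonakveld,ms2}, and its implementation over $E$---compatibly with the constraints at critical fibers and with the $\pi$-stabilization axiom---is the principal technical content of the theorem.
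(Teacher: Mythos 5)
Your proposal follows the paper's strategy essentially step for step: the universal moduli space / Sard--Smale framework, the splitting of the linearized Cauchy--Riemann operator via the reference connection $TF\oplus H_0$, and---crucially---the observation that Hamiltonian perturbations $\sigma\in\ham^l(F,B)$ enter through the antilinear off-diagonal block $J_F\circ X_\sigma - X_\sigma\circ J_B$ of \ref{connectionacs} and can therefore be used to drive the vertical part of the cokernel on components with $du^{TF}=0\neq du^\sigma$, which is exactly the paper's Case 1. The remaining discrepancies are expository rather than substantive: the unique-continuation density argument has nothing to do with positivity of intersection with $D$ or the zero-average condition (it is the standard Hahn--Banach plus pointwise-choice argument of \cite{ms2}); the horizontally constant case $du^\sigma=0\neq du^{TF}$ is dispatched a bit too quickly---the paper handles simple disks by the standard $J_F$-variation argument and non-simple ones via a Lazzarini decomposition plus a monotonicity index bound, both of which are only tacit in your ``possible because $F$ is monotone''; and the claim that the coherence axioms then ``hold automatically'' glosses over the (collapsing an edge) constraint the paper explicitly singles out as requiring the most care.
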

\begin{proof}
Let $C$ be the geometric realization of treed disk of type $\g$. For $p\geq 2$ and $k> 2/p$ let $\map^{0}(C,E,L)_{k,p}$ denote the space of (continuous) maps from $C$ to $E$ with boundary and edge components in $L$ that are of the class $W^{k,p}$ on each disk, sphere, and edge. 
\begin{lemma}\label{mapisabanachmanifold}
$\mathrm{Map}^{0}(C,E,L)_{k,p}$ is a $C^q$ Banach manifold, $q<k-n/p$, with local charts centered at $u$ given by the fiber sum of $W^{k,p}$ vector fields on pullback bundles that agree at disk nodes and interior markings, where the chart into $\mathrm{Map}^0$ is given by geodesic exponentiation with respect to some metric $g$ on $E$ that makes $L$ and $D$ totally geodesic.

\end{lemma}

Let $\mathrm{Map}^0_\g(C,E,L)_{k,p}\subset \mathrm{Map}^0(C,E,L)_{k,p}$ denote the submanifold of maps whose spheres and disks map to the labelled homology classes that have the prescribed tangencies to the divisor.

In general, the space $\mathrm{Map}^0_\g(C,E,L)_{k,p}$ is a $C^q$ Banach submanifold where $q<k-n/p-\mathrm{max}_e m(e)$. As is typical, we construct perturbation data on trivializations of the universal treed disk and patch together. Take such a trivialization $\mc{U}_\g^i\rightarrow \mathfrak{M}_\g^i$ and a fiber $C$ above a point $c$. By identifying nearby curves with $C$ we have a map $m\mapsto j(m)$ where $j(m)$ is a biholomorphism class of complex structures on the fiber above $m$. The Banach manifold
\begin{equation}
\mathcal{B}_{k,p,\g,l}^i:= \mathfrak{M}_\g^i\times \mathrm{Map}^0_\g(C,E,L)_{k,p}\times \mathcal{P}_\g^l(E,D)
\end{equation}
will be the domain for the $\bar{\partial}$ operator. Let $J_\sigma$ denote the almost complex structure from a perturbation datum in the form \eqref{connectionacs}. We have a Banach vector bundle $\mathcal{E}_{k,p,\g,l}^i$ given by
\begin{align*}
(\mathcal{E}_{k,p,\g,l}^i&)_{m,u,J_\sigma,X}\subset \left(\underset{v\in \v (  \g)}{\bigoplus} \Lambda_{j(m),J_\sigma,\g}^{0,1}(C_v, u_v^*TE)_{k-1,p}\right)\\& \bigoplus \left(\underset{e\in\e (\g)}{\bigoplus} \vect(C_e, u_e^*TL)_{k-1,p}\right)\rightarrow \mc{B}^i_{k,p,\g,l}
\end{align*} 
where for each vertex $v$ and edge $e$, $\Lambda_{j(m),J_\sigma,\g}^{0,1}(C_v, u_v^*TE)_{k-1,p}$ denotes the space of $(0,1)$-forms with values in the indicated bundle that send $T\partial (C_v)$ to $u^*TL$, $\vect$ is the space of $(k-1,p)$-vector fields on $u^*_eTL$, and $(\mathcal{E}_{k,p,\g,l}^i)_{m,u,J,F}$ is the subspace of sections that are tangent to $D$ of order $m_\g(e)-1$ at the node or marking corresponding to $e$ (see Remark \ref{tangencyremark}). Local trivializations of the Banach bundle are given by parallel transport along geodesics in $E$ via a Hermitian connection associated to the family $J_\sigma$, where we assume that the chosen metric makes $L$ and $D$ totally geodesic. For the transition maps to be $C^q$, we need the $l$ in $\mathcal{JH}^l(E,\omega_{H,K})$ large enough so that $q<l-k$.

There is a $C^q$ section $\overline{\partial}:\mathcal{B}_{k,p,\g,l}^i\rightarrow \mathcal{E}_{k,p,\g,l}^i$ via
\begin{equation}\label{section}
\left(m,u,J_\sigma,X\right)\mapsto \left(\overline{\partial}_{j(m),J_\sigma} u_S, \frac{du_T}{d\mathrm{s}}-X\right)
\end{equation}
with
\begin{equation*}
\overline{\partial}_{j(m),J_\sigma} u_S:=\mathrm{d}u_S+J_\sigma\circ\mathrm{d}u_S\circ j(m)
\end{equation*}
where $u_S$ resp.~ $u_T$ denotes the restriction of $u$ to the totality of surface resp.~ edge components.

The \emph{local universal moduli space} is defined to be
\begin{equation}\label{universalmoduli}
\mathcal{M}^{univ,i}_\g(E,L,D)_{k,p}:=\overline{\partial}^{-1}\mathcal{B}_{k,p,\g,l}^i
\end{equation}
where $\mathcal{B}_{k,p,\g,l}^i$ is identified with the zero section.

In order to show that the universal moduli space has a differentiable manifold structure, we need to show that the linearized $\overline{\partial}$ operator is surjective, ie that the $\overline{\partial}$ section is transverse to the zero section.

Denote the linearization of the first summand in \eqref{section} by $D_{u,J_\sigma,j}$. In the variable of $\mathrm{Map}_\g^0(C,E,L)_{k,p}$, the $(0,1)$ component of this takes the form
\begin{equation}\label{linearizedopinfunctionvar}
\pi_{\Lambda^{0,1}}\circ D_{u,J_\sigma,j}\vert_{0\oplus T\map^0_\g\oplus 0}=\nabla \xi + J_\sigma\circ\nabla \xi\circ j - J_\sigma(u)(\nabla_\xi J_\sigma) \partial_{j(m),J}u_S.
\end{equation}

The $(0,1)$-component of $D_{u,J_\sigma,j}$ with respect to the $\mathcal{P}_\g(E,D)$ variable is of the form
\begin{equation*}
0\oplus 0\oplus T_{J_\sigma}\mathcal{P}_\g\rightarrow \Lambda^{0,1}(C,u_S^*TE)_{k-1,p},\qquad \mf{J}\mapsto \mf{J}\circ \mathrm{d}u_S \circ j.
\end{equation*}
Here, $\mf{J}$ lies in a direct summand of $T_{J_\sigma,X}\mc{P}_\g$ that is isomorphic to $$\Lambda^0(\Upsilon\pi_*\mc{U}_\g^i,T\mc{JH}^l(E,\omega_{H_0,K}))$$ with support on the set $\tilde{S}_\g$ (see Definition \ref{perturbationdatadef}), and we have
\begin{equation*}
T_{J_F,J_B,\sigma}\mc{JH}^l(E,\omega)=T\mc{J}_{J_F}^l(F,\omega_F)\oplus T_{J_B}\mc{J}^l(B,\omega_B)\oplus T_\sigma \ham^{l,l} (F,B).
\end{equation*}
The tangent spaces at the complex structures are given by
\begin{align*}T_{J_F}\mc{J}^l(F,\omega_F)&:=\left\{ K_F\in C^l(B,\en (TF)):K_F\circ J_F=-J_F\circ K_F\right\},\\
T_{J_B}\mc{J}^l(B,\omega_B)&:=\left\{K_B\in \en{(TB)}:K_B\circ J_B=-J_B\circ K_B\right\},
\end{align*}
and the tangent space $T_\sigma \ham(F,B)$ is again $\ham (F,B)$ as it is a linear space. The exact formula for $\mf{J}$ in the splitting $H_0$ evaluated at a point $z\in \Upsilon\pi_*\mc{U}_\g^i$ is
\begin{equation*}
\mf{J}_z(K_F,K_B,\beta)=\begin{bmatrix}
K_F & X^{0,1}_\beta+ X_\sigma\circ K_B-K_F\circ X_\sigma\\
0 & K_B
\end{bmatrix}.
\end{equation*}
We note that in the $H_\sigma$ splitting, the term $X_\sigma\circ K_B-K_F\circ X_\sigma$ disappears if we realize the action of $K_B$ on $H_\sigma$ appropriately. 

Fix a single surface component $u:=u_v$. We have a splitting of the domain and range of the linearized operator from the connection $H_\sigma$:
\begin{multline}\label{linearizedoperator}
\pi_{\Lambda^{0,1}}\circ D_{u,J_\sigma,j}: W^{k,p}(C_v,u^*(TF,L_F))\oplus W^{k,p}(C_v,u^*(H_\sigma,H_L)) \oplus T\mathcal{P}_\g 
\\ \rightarrow \Lambda_{j,J_\sigma}^{0,1}(C_v,u^*TF)_{k-1,p}\oplus\Lambda_{j,J_\sigma}^{0,1}(C_v,u^*H_\sigma)_{k-1,p}.
\end{multline}
The operator itself is denied a spitting at a holomorphic $u$ because of the term $\mf{J}\circ du\circ j$. If $du=du^{TF}\oplus du^\sigma$, then
\begin{equation}\label{linearizedsplitting}
\mf{J}\circ du\circ j=(K_F\circ du^{TF}\circ j + X^{0,1}_\sigma\circ du^\sigma\circ j) \oplus K_B\circ du^{\sigma}\circ j.
\end{equation}


%
%
%

To show surjectivity we consider cases where for the splitting $du^{TF}\oplus du^\sigma$ we have that one, both, or neither of the derivatives vanish. Let $D_u$ be shorthand for \eqref{linearizedopinfunctionvar}, and $D^{TF}$ resp.~ $D^\sigma$ denote the projection of \eqref{linearizedoperator} onto the first resp.~ second factors.

\begin{enumerate}

\item[Case 1:\label{case1trans}]\emph{$u$ is vertically constant: $du^{TF}=0\neq du^\sigma .$} It follows from $\pi$-stability that the domain of $u$ is stable and has no automorphisms after forgetting marked components on $\g$. First consider the case when $u$ has no tangencies to the divisor.

We show that the image of \eqref{linearizedoperator} is dense in $\Lambda^{0,1}_{j,J_\sigma}(C_v,u^*TF\oplus H_\sigma)_{k-1,p}$, and the analytic details will parallel that of \cite[Proposition 3.2.1]{ms2}. Suppose this is not the case for some $(u,J_\sigma)\in \bar{\partial}^{-1}(0)$. Since \eqref{linearizedoperator} is Fredholm (see \cite[Appendix C]{ms2}), the image is closed. Let $\langle\cdot ,\cdot\rangle_\sigma$ denote some inner product on the pullback bundle that is Hermitian with respect to $J_\sigma$, orthogonal with respect to the splitting, and is such that $J_\sigma u^*L=u^*L^\perp$. Suppose for a moment that $k=1$. By the Hahn-Banach theorem, there is an non-zero element $\eta\in \Lambda^{0,1}_{j,J_\sigma} (C_v, u^* TF\oplus H_\sigma)_{0,q}$ such that
\begin{equation*}
\int_{C_v} \langle D_{u}\xi+ \mf{J}\circ du\circ j, \eta \rangle_\sigma = 0
\end{equation*}
for every $\xi\in W^{1,p} (C_v,u^*(TE,TL))$ and $\mf{J}\in T_{J_\sigma}\mathcal{P}_\g$. Thus, we have the following identities for all $\xi$ of class $(k,p)$ and $\mf{J}$:
\begin{align}
&\int_{C_v} \langle D_{u}\xi, \eta \rangle_\sigma = 0 \qquad\text{and}\label{contradictionequationadjoint}
\\ &\int_{C_v} \langle \mf{J}\circ du\circ j, \eta \rangle_\sigma = 0. \label{contradictionequation}
\end{align}
By \cite[Theorem C.2.3]{ms2}, $\eta$ is a solution the Cauchy-Riemann type equation $$D^{*}_u\eta = 0$$ where $D^{*}_u$ is the formal adjoint, and it follows that $\eta$ is of class $(k,p)$. Moreover, $\eta\neq 0$ on a dense open subset $U$ of $C_v$ by unique continuation.

Let $\eta=(\eta_1,\eta_2)$ in the splitting of the target in \eqref{linearizedoperator}. We have that $\eta_1$ resp.~$\eta_2$ is $(J_F,j)$ resp.~$(J_B,j)$ anti-holomorphic and $(k,p)$, so $\eta_i$ is either identically $0$ or non-zero on a dense set. First assume that $\eta_1\neq 0$. 

Since $u$ is not constant, we can pick a non-branch point $z$ where $du^{\sigma}_z\neq 0 \neq \eta_1(z)$ that is contained in $\tilde{S}^o_\g$. Choose a $\beta$ so that $X^{0,1}_\beta\circ du^\sigma_z\neq 0$ and
$$\langle  X_\beta^{0,1}\circ du_z^{\sigma}\circ j, \eta_1 (z)\rangle >0$$
(eg see \cite[Lemma 3.2.2]{ms2} for a basic recipe for $X^{0,1}_\beta$). Choose a small neighbourhood $U_z$ on which $u$ is injective and extend $X^{0,1}_\beta$ to $0$ outside of $U_z$, and do so in such a way that 
$$\langle  X_\beta^{0,1}\circ du^{\sigma}\circ j, \eta_1 \rangle>0$$
on $U_z$. Since $du^\sigma\neq 0$, $u_v$ has a stable domain by the $\pi$-adapted assumption, so this particular $X_\beta^{0,1}$ with $K_F,K_B=0$ defines an element of the tangent space $T_{u,J_\sigma}\mc{P}_\g$. We get 
$$\int_{C_v}\langle  X_\beta^{0,1}\circ du^{\sigma}\circ j, \eta_1\rangle_\sigma =\int_{U_z}\langle  X_\beta^{0,1}\circ du^{\sigma}\circ j, \eta_1 \rangle_\sigma >0,$$
which is a contradiction to \eqref{contradictionequation}. Therefore, \eqref{linearizedoperator} surjects onto the subspace $\lbrace \eta_2=0\rbrace$.

Next assume that $\eta_2\neq 0$ on a dense subset $U$, and choose $z$ so that $du_z^\sigma\neq0\neq \eta_2(z)$ and $z$ is not a branch point for $u$. Similar to the above argument (and essentially following \cite[Theorem 4.1]{CW1}), one can find a local section $K_B:C_v\rightarrow T_{J_B}\mc{J}^l(B,\omega_B)$ supported around $z$ so that 
$$\langle K_B (x)\circ du^{\sigma}_x\circ j, \eta_2(x)\rangle > 0$$
whenever $K_B\neq 0$. Since $u$ is $\pi$-adapted, $u_v$ has a stable domain and such a $K_B$ with $K_F, \beta=0$ defines an element in $T_{u,J_\sigma}\mc{P}_\g$. We have
$$\int_{C} \langle \mf{J}\circ du\circ j, \eta \rangle =\int_{C} \langle K_B\circ du^{\sigma}\circ j, \eta_2 \rangle >0$$
which again contradicts \eqref{contradictionequation}.

When there are tangencies to the divisor, the above method in combination with Lemma 6.6 from \cite{CM} gives surjectivity in this case.

%

\item[Case 2:] $du^\sigma, du^{TF} \neq 0$. This is the same as case $1$.

\item[Case 3:]\emph{$u$ horizontally constant: $du^\sigma=0\neq du^{TF}$}
The domain corresponds to a marked vertex of $\g$, so that $u$ is a $J_{F}$-holomorphic curve for the pair $(F,L_F)$. First, we focus on showing that $D^{\sigma}$ is surjective in \eqref{linearizedoperator}, and then $D^{TF}$.

To see that $D^\sigma$ is surjective, note that since $du^\sigma=0$ we have that the operator in \eqref{linearizedoperator} also splits with summand $D^\sigma$. Further, there is an isomorphism
$$ \Lambda_{j,J_\sigma}^{0,1}(C_v,u^*H_\sigma)_{k-1,p}\xrightarrow{d\pi_*}\Lambda_{j,J_B}^{0,1}(C_v,\pi\circ u^*TB)_{k-1,p}.$$
Since $\pi\circ u^*TB$ is a trivial holomorphic vector bundle with trivial real subbundle over the boundary, we get an injective map
$$\Lambda_{j,J_B}^{0,1}(C_v,\pi\circ u^*TB)_{k-1,p}\rightarrow \Lambda_{j,J_B}^{0,1}(\bb{P}^1,\pi\circ u_{\bb{P}^1}^*TB)_{k-1,p}$$
via Schwartz reflection, where $u_{\bb{P}^1}$ is the trivial extension of $u$ to $\bb{P}^1$. For a fixed $J_B$, $\pi\circ u^*TB$ splits into trivial holomorphic line bundles $L$ that are preserved by the operator $D^{\sigma}$. In the case $k=\infty$ the cokernel of $D^{\sigma}$, when considered as an operator on $L$ is precisely the Dolbeaut cohomology $H^{0,1}(\bb{P}^1,L)$, which vanishes as $H^{0,1}(\bb{P}^1,L)\cong H^{1,0}(\bb{P}^1,L^*)^*=0$. Hence, $D^{\sigma}$ is surjective in the smooth case. In the $(k-1,p)$ case, $\eta\in  \coker D^{\sigma}$ is also in $\ker D^{\sigma *}$ by \eqref{contradictionequationadjoint} and \cite[Thm C.2.3]{ms2}. By elliptic bootstrapping it follows that $\eta$ is smooth and thus must be zero by the above argument. It follows that $D^\sigma$ is surjective. Note that this argument also works when $du^\sigma=0=du^{TF}$.

Next, we analyze the operator $D^{TF}$. 

If $u$ is somewhere injective, we can use the standard argument from \cite[Proposition 3.2.1]{ms2} to get surjectivity of $D^{TF}$.

If $u$ is a non-constant and nowhere injective sphere component attached to the configuration $\g$, by standard results $u = \tilde{u}\circ p$ for a degree $m>1$ holomorphic covering map $p$ where $\tilde{u}$ is simple (injective on an open dense set). From this, we get that $2c_1(A_u)=2mc_1(A_{\tilde{u}})>2m$ by monotonicity of $F$ and the minimal Maslov number assumption. Let $\tilde{\g}$ denote the configuration with $u$ replaced by an appropriate $\tilde{u}$ with choice of marked points: Specifically, one can replace interior nodes that map to the same point by a ghost component that attaches to a simple sphere and contains those nodes. We have
$$\I(\tilde{\g},\underline{x})+2m\leq\I(\g,\underline{x}).$$
Given that one can show transversality at $\tilde{\g}$, it follows that $\I (\tilde{\g})\geq 0$ so that $\g$ violates our index assumption. Thus, it suffices to reduce to the case that all marked sphere components are simple.

Suppose $u$ is a nowhere injective disk component. By \cite[Theorem A]{laz}, there is a closed graph $\mc{G}\subset \bar{D}$ including $\partial D$ with finitely many components $\mc{G}_i$ in the complement such that $u_i:=u\vert_{\mc{G}_i}=v_i\circ \mc{\phi}_i$ where $\phi_i$ is a degree $m_i$ multiple branched cover of disks and $v_i$ is a simple holomorphic disk. Thus we have that $[u]=\sum_i m_i[v_i]$ and $I (v_i)\geq 2$. If at least one of the $m_i=1$, then we can argue as in the somewhere injective case, so assume that $m_i>1$. Following \cite[section 3.2]{birancorneapearl}, we argue that we can replace $u$ with a lower index configuration that has Fredholm regular properties. 
Choose a point $z_i\in \mc{G}_i$ for each $i$ and let $(x_0,\dots x_k)$ be boundary marked points. Let $\mc{K}\subset \bar{D}$ be a tree with curved edges whose vertices include the $x_j$'s and a subset of $z_i$'s, with the following properties:
\begin{enumerate}
\item The valence of $\mc{K}$ at $x_j$ is one,
\item\label{graphtransverseintersection} $\mc{K}$ intersects $\mc{G}\setminus \partial D$ transversely in its smooth locus and intersects $\partial D$ only at the $x_j$'s, and
\item for every embedded path $\gamma:[0,1]\rightarrow \mc{K}$ with endpoints at (distinct) $x_j$'s, each $\gamma^{-1}(\mc{G}_i)$ is one connected component if the path goes through $z_i\in \v (\mc{K})$ and empty otherwise.
\end{enumerate}
Note that point $(b)$ is possible since the graph is smooth outside of the set $du=0$ (see Lazzarini \cite[Definition 3.1, 3.2]{laz}). It follows that we can replace $u$ with the configuration $\tilde{u}$ that consists of the disks $u_i$ where $\mc{K}\cap\mc{G}_i\neq \emptyset$, with boundary nodes given by $\mc{K}\cap\mc{G}$ and marked points by $x_i$. Note that we can adjust the graph $\mc{K}$ so that each $u_i$ is injective on 
\begin{equation}\mc{K}\cap \Big(\lbrace x_j\rbrace \cup [\partial\bar{\mc{G}}_i\setminus \partial D]\Big)
\label{injectivityongraph}\end{equation} for each $x_j\in\partial\bar{\mc{G}}_i$; thus we can replace any multiply covered disk $u_i$ containing at most one $x_j$ with the underlying simple marked disk $v_i$. In case some component satisfies $u_i(x_j)=u_i(x_l)$, we note that $u_i$ factors (as a marked disk) through $v_i$ attached to a ghost component that contains all marked points mapped to a single value. All together, we can produce a configuration $v$ that consists only of constant or simple disks with the same number of marked points and the same incidence conditions at those marked points. Let $\tilde{\g}$ denote the resulting modified configuration. It follows from the fact that each $v_i$ is constant or simple that $D^{TF}$ is surjective on the surface part of $v$. Thus we have $\I(\tilde{\g})+2\leq \I (\g)$ by the minimal Maslov assumption on $L_F$, so a posteriori we may reduce to this case. 
\end{enumerate}

Surjectivity of the Morse operator on edges is a matter of a standard argument: The linearization of the operator $\frac{d}{ds}-X$ at a solution $u_T$ is
$$ (V,Y)\mapsto \nabla_s V-Y$$
where $(V,Y)\in \T{Vect}(u_T^*TL)_{k,p} \times \T{Vect}_c(\mc{T}^\circ_\g,u_T^* TL)_{k,p}$ and $\nabla_s$ is the pullback of the Hermitian connection. If $x\in u_T^*TL$ is in the cokernel this linearization, then we have 
\begin{gather}
\int_T\langle \nabla_s V,x\rangle=0\\
\label{contradictioneq2}\int_T\langle Yu_T,x\rangle=0
\end{gather}
for all $V\in u_T^*TL$ and $Y\in \T{Vect}_c(\overline{\mathcal{T}}_\circ\times L,\R)$. Thus, $x$ satisfies $$-\nabla_s x=0.$$ If $x\neq 0$, then it must be non-zero on an open dense subset of $T$ by uniqueness of solutions to ODEs. Choose a compactly supported pseudo-gradient $Y$ so that the pairing $\langle Y ,x\rangle$ is positive on a small open subset and $0$ otherwise. This gives a contradiction.

Surjectivity for constant disks and spheres is that same as the argument at the beginning of case $3$.

By the implicit function theorem, $\mc{M}^{univ,i}_{\g}(E,L,D)_{k,p}$ is a $C^q$ Banach manifold.
\\

We now consider the the projection $\Pi:\mc{M}^{univ,i}_{\g}(E,L,D)_{k,p}\rightarrow \mathcal{P}_\g^l(E,D)_i$ given by restriction the projection from $\mathcal{B}_{k,p,\g,l}^i$ to the universal moduli space. The kernel resp. cokernel of this projection is isomorphic the kernel resp. cokernel of the linearization of \eqref{linearizedoperator} for fixed $m$ and $P_\g$. Since the later is Fredholm so is $\Pi$. Let $\mc{M}^{univ,i}_d$ be the component of the universal space on which $\Pi$ has Fredholm index $d\leq 1$. By the Sard-Smale theorem, for $q$ large enough the set of regular values of $\Pi$ is comeager. Denote the regular values of the restriction to $\mc{M}_d^{univ,i}$ by $\mathcal{P}^{l,reg}_\g(E,D)_{d,i}$.  For each trivialization of the universal curve, let $$\mathcal{P}^{l,reg}_\g(E,D)_d = \bigcap_i \mathcal{P}^{l,reg}_\g(E,D)_{d,i}$$ which is also a comeager set. An argument due to Taubes (see \cite[Theorem 3.1.5 II]{ms2}) shows that the set of smooth regular perturbation datum 
$$\mathcal{P}^{\infty,reg}_\g (E,D)_d =\bigcap_l \mathcal{P}^{l,reg}_\g(E,D)_d.$$
is also comeager. For $P_\g$ in the set of smooth regular data, denote $\m^i_\g(E,L,D,P_\g)$ as the space of $P_\g$ trajectories in the trivialization $i$, a $C^q$ manifold of dimension $d$. By elliptic regularity, every element of $\m^i_\g(E,L,D,P_\g)$ is smooth. Using the transition maps for the universal curve of $\g$, we get maps $g_{ij}:\m^i_\g\cap\m^j_\g\rightarrow \m^i_\g\cap\m^j_\g$ that serve as transition maps for the space $$\m_\g(E,L,D,P_\g) = \bigcup_i  \m^i_\g(E,L,D,P_\g).$$ Since each piece $\m^i_\g (P_\g)$ and the moduli space of treed disks is Hausdorff and second countable and the moduli space of treed disks is also, it follows that $\mc{M}_\g (P_\g)$ is Hausdorff and second countable.

Hence, for each combinatorial type $\g$ we have a comeager set of smooth regular data. To select a coherent system, one can use the fact that the collection $\lbrace \g\rbrace$ is finite and that a countable intersection of comeager sets is again comeager. The most serious condition that occurs is that of (collapsing an edge), as this requires perturbation datum on $\mc{U}_\g$ lie in the intersection of two comeager sets on surface components and imposes an empty condition on the collapsed (zero-length) edge.  If we either collapse a zero length boundary node or make a boundary node length non-zero, one can perturb on the small length edge via a covariant constant non-zero vector field. Such a perturbation is determined from a vector on the zero length edge, and we will also get a contradiction in \eqref{contradictioneq2}.

Once we have coherence, the gluing argument that produces the tubular neighbourhood of $\mathcal{M}_{\g'}(E,L,D,P_{\g'})$ in $\mathcal{M}_\g(E,L,D,P_\g)$ is the same as in \cite{CW1,CW2}. The matter of assigning compatible orientations is also similar.

\end{proof}

\begin{remark}
Note the argument for Case 1 and 2 in the proof of Theorem \ref{transversality} are the same. In the sequel to this paper, it will be important to be able to choose  a very specific vertical complex structure for the ``vertically constant" configurations. The argument in Case 1 uses domain dependence of the connection $\sigma$ to show that the linearized operator \eqref{linearizedoperator} is surjective at \emph{any} vertical almost complex structure for non-horizontally constant configurations. 
\end{remark}

\section{Compactness}\label{comp}

We make sure that the moduli spaces $\mc{M}_\g(D,P_\g)$ can be compactified without adding sphere bubbles. To keep control of curves in the base we use the stabilizing divisor, and to control things in the vertical direction we use monotonicity and regularity.

We follow \cite{CW2}. Let $D_B$ be a symplectic divisor in the complement of $L_B$ in the sense of Definition \ref{stabilizing divisor definition}. 
\begin{definition}[{\cite[Definition 4.24]{CW2}}]\label{stabilizedbyDdef} An almost complex structure $J_B\in \mc{J}(B,\omega_B)$ adapted to $D_B$ is \emph{$\varrho$-stabilized} by $D_B$ if
\begin{enumerate}
\item$D_B$ contains no non-constant $J_{B}$-holomorphic spheres of energy less than $\varrho$,
\item each non-constant $J_{B}$-holomorphic sphere $u: S^2\rightarrow B$ with energy less than $\varrho$ has $\# u^{-1}(D_B)\geq 3$, and
\item every non-constant $J_{B}$-holomorphic disk $u:(D,\partial D)\rightarrow (B,L_B)$ with energy less than $\varrho$ has $\# u^{-1}(D_B)\geq 1$.
\end{enumerate}
\end{definition}
We will also need the following definition:
\begin{definition}[{\cite[Chapter 4.5]{CW2}}]\label{largeenoughdegreedefinition}
A symplectic divisor $D_B$ in the complement of $L_B$ has large enough degree for a tamed $J_B$ if
\begin{enumerate}
\item $([D_B]^{PD},\alpha) \geq 2(c_1(B),\alpha) + \dim(B) +1$ for all $\alpha\in H_2(B, \mathbb{Z})$ representing non-constant $J_{B}$-holomorphic spheres, and
\item $([D_B]^{PD},\beta) \geq 1$ for all $\beta\in H_2(B,L_B, \mathbb{Z})$ representing non-constant $J_{B}$-holomorphic disks.
\end{enumerate}

\end{definition}

One can see \cite{CM} for the existence of divisors of large enough degree. Given $J_B\in\mathcal{J}(B,\omega_B)$ define the subset
\begin{align*}
\mathcal{J}(B,D_B, J_B, \theta):=\Big\{
J\in \mathcal{J}(B,\omega_B): \|J_B - J\| < \theta, J(TD_B)=TD_B \Big\}
\end{align*} where $\|\cdot\|$ is either the $C^N$ norm or Floer's $C^\varepsilon$ norm after fixing a reference structure. The discussion leading up to \cite[Lemma 8.9]{CM} tells us that this is a non-empty open subset in the space of structures adapted to $D_B$, and it can be connected by paths that pass into the corresponding space for slightly larger $\theta<\theta_0$. We have the following key lemma that relates all of these concepts in an advantageous way:
\begin{lemma} [{\cite[Lemma 4.25]{CW2},\cite[section 8]{CM}}] \label{establetheorem} For $\theta$ sufficiently small, suppose that $D_B$ has sufficiently large degree for an almost complex structure $\theta$-close to $J$. For each energy $\varrho>0$, there exists an open and dense subset $\mathcal{J}^*(B,D_B, J, \theta, \varrho)\subset \mathcal{J}(B,D_B, J, \theta)$ such that if $J_{D_B}\in\mathcal{J}^*(B,D_B, J, \theta, \varrho)$, then $J_{D_B}$ is $\varrho$-stabilized by $D_B$. Similarly, if $D=(D^t)$ is a family of divisors for $J^t$, then for each energy $\varrho>0$, there exists a dense and open subset $\mathcal{J}^*(B,D^t, J^t, \theta, \varrho)\subset\mathcal{J}^*(B,D^t, J^t, \theta)$ such that if $J_{D^t}\in\mathcal{J}^*(B,D^t, J^t, \theta, \varrho)$, then $J_{D^t}$ is $\varrho$-stabilized for all $t$.
\end{lemma}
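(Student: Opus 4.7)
The plan is to reduce the statement about $\varrho$-stabilization to a finite transversality problem by using the bounded energy assumption, then invoke standard open-dense arguments from Cieliebak--Mohnke combined with Sard--Smale. First I would observe that by Gromov compactness applied to the compact family of $J_{D_B} \in \mathcal{J}(B,D_B,J,\theta)$, for fixed energy $\varrho$ only finitely many homotopy classes of spheres in $B$, spheres in $D_B$, and disks with boundary in $L_B$ can host non-constant $J_{D_B}$-holomorphic representatives with energy less than $\varrho$. Denote these classes by $\{\alpha_i\}, \{\alpha_i'\} \subset H_2(B,\mathbb{Z})$ and $\{\beta_j\} \subset H_2(B,L_B,\mathbb{Z})$ respectively.

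Next I would verify each clause of Definition \ref{stabilizedbyDdef} separately using the universal moduli approach already developed in the transversality argument for Theorem \ref{transversality}. For clause (1), the large-degree hypothesis $([D_B],\alpha_i') \geq 2(c_1(B),\alpha_i') + \dim B + 1$ combined with adjunction forces the expected dimension of the universal moduli of non-constant $J_{D_B}$-holomorphic spheres entirely contained in $D_B$ to be strictly negative; thus a comeager subset of $\mathcal{J}(B,D_B,J,\theta)$ has no such spheres at all. For clause (2), one writes down the universal evaluation map at two additional marked points and argues by a dimension count (using the degree lower bound on $[D_B]$) that for generic $J_{D_B}$ the preimage $u^{-1}(D_B)$ must have cardinality at least three; positivity of intersection between the $J_{D_B}$-holomorphic sphere and the almost-complex submanifold $D_B$ provides the required lower bound once one rules out total containment. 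For clause (3), the same argument, using the disk degree inequality $([D_B],\beta_j) \geq 1$, shows generically $u$ meets $D_B$ at least once.

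Openness of the resulting subset $\mathcal{J}^*(B,D_B,J,\theta,\varrho)$ is a Gromov-compactness argument: if $J_n \to J_\infty$ is a sequence of $J_n \notin \mathcal{J}^*$ converging to some $J_\infty \in \mathcal{J}(B,D_B,J,\theta)$, then each $J_n$ admits a violating curve of energy $\leq \varrho$ with intersection count too small; Gromov compactness produces a limit bubble tree for $J_\infty$ that still violates one of the three conditions (intersection numbers being upper semi-continuous under Gromov limits of adapted almost complex structures), so the complement of $\mathcal{J}^*$ is closed. Density follows from intersecting the finitely many comeager transversality subsets indexed by $\{\alpha_i\}, \{\alpha_i'\}, \{\beta_j\}$. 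For the parametrized statement with a family $D^t$ over $[0,1]$, one performs the same argument on the parameter space $[0,1] \times \mathcal{J}(B,D^t,J^t,\theta)$ and uses the fact that the index of the parametrized operator increases by one — still leaving negative expected dimension for clause (1) and enforcing the intersection counts for (2), (3) — then projects the comeager regular set to $\mathcal{J}(B,D^t,J^t,\theta)$.

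The main obstacle is the upper semicontinuity of intersection numbers with $D_B$ under Gromov degeneration, which is needed for openness: one must rule out intersections escaping into nodal bubbles in a way that drops the count. This is handled as in \cite{CM,CW2} by the positivity of intersections with the almost-complex submanifold $\pi^{-1}(D_B)$ and the fact that a bubble entirely contained in $D_B$ is excluded by clause (1), itself secured by the large-degree hypothesis. Because the present lemma is essentially the pullback of \cite[Lemma 2.23]{CW2} along $\pi$ (or rather its base-level statement), the sole new verification needed is that the finite enumeration of classes in $H_2(B,\mathbb{Z})$ and $H_2(B,L_B,\mathbb{Z})$ representable below energy $\varrho$ remains valid after passing from $J$ to nearby $\theta$-close $J_{D_B}$; this is immediate from Gromov compactness for a $C^0$-precompact family of almost complex structures.
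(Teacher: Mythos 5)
The paper does not prove Lemma~\ref{establetheorem}: it is stated with a citation to \cite{CW2} and no argument is given. So there is no internal proof to compare against; the relevant comparison is with the Cieliebak--Mohnke / Charest--Woodward argument that the citation points to, and your reconstruction does follow that line. Your outline captures the main ingredients: reduce to finitely many homology classes via Gromov compactness and the energy bound, handle each clause of Definition~\ref{stabilizedbyDdef} by a dimension count on universal moduli, and deduce openness from Gromov compactness (with positivity of intersections preventing the count from dropping in a limit) and density by intersecting finitely many comeager sets.

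A couple of points where your sketch is looser than the argument it is modeled on. For clause~(1), you correctly invoke adjunction and the degree lower bound, but you should be explicit that the degree hypothesis is calibrated to make the \emph{simple} curves in $D_B$ have negative index; multiply covered spheres are then excluded because the underlying simple curve would already have to exist. For clause~(2), the assertion that ``one writes down the universal evaluation map at two additional marked points and argues by a dimension count'' does not by itself produce three \emph{distinct} preimages. The Cieliebak--Mohnke route is to show that for generic $J$ every non-constant sphere is a branched cover of a \emph{simple} sphere, that the simple underlying sphere meets $D_B$ transversely in $[D_B]\cdot\beta$ distinct points for its class $\beta$, and then that the degree lower bound forces $[D_B]\cdot\beta\geq 3$; the count for the multiple cover is inherited from the preimages. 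Your phrase ``positivity of intersection\ldots provides the required lower bound once one rules out total containment'' is pointing in the right direction, but positivity of intersection alone controls the algebraic intersection number, not the number of points, so the tangency/multiple-cover analysis is genuinely needed. These are refinements rather than wrong turns; the overall strategy matches the cited source.
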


From the construction of each fibered almost complex structure $J^{J_F}_{\sigma, J_B}$ from Lemma \ref{uniqueacslemma}, we have a pushforward
$$\pi_*:\mc{JH}(E,\omega_{H_0,K})\rightarrow \mathcal{J}(B,\omega_B)$$
between spaces of (eg $C^N$ or $C^\varepsilon$) data. Given a tamed almost complex structure $J_B$ for which $D_B$ has sufficiently large degree, define
$$\mc{JH}^*(E,\omega_{H_0,K},\varrho)_{J_B}^\theta:=\pi_*^{-1}\mc{J}^*(B,D_B,J_B,\theta,\varrho)$$
to be the inverse image of the set from Lemma \ref{establetheorem}. Note that if we take $K$ large enough and restrict $\sigma$ to take bounded curvature $\pi_*^{-1}\mc{J}(B,D_B,J_B,\theta)$ becomes a Banach manifold, since by Lemma \ref{tamingconditionlemma}, $\pi_*$ becomes a submersion for a given $J_B$ and $\theta$. Thus,  $\mc{JH}^*(E,\omega_{H_0,K},\varrho)_{J_B}^\theta$ is again open and dense in $\pi_*^{-1}\mc{J}(B,D_B,J_B,\theta)$.

For a $\pi$-stable unbroken combinatorial type $\g$, let $\g_1,\dots,\g_l$ be the decomposition obtained by cutting boundary nodes of positive finite length. Let $\mc{U}_{\g_1},\dots,\mc{U}_{\g_l}$ be the corresponding decomposition of the universal curve. Let $n(\g_i)$ be the number of interior markings on $\mc{U}_{\g_i}$. Since $[D_B]^{PD}=k[\omega_B]$, any stable treed holomorphic disk projected to $B$ of unmarked type $\g_i$ and transverse intersections with the divisor has energy (in $B$) at most
\begin{equation}
n(\g_i,k) := \dfrac{n(\g_i)}{k}
\end{equation}
on the component $\mc{U}_{\g_i}.$
\begin{definition}
For a symplectic divisor $D_B$ and $J_B$ of high enough degree, a perturbation datum $P_\g = (J_\g,X_\g)$ for a type of $\pi$-stable type $\g$ is $\pi$-\emph{stabilized} by $D=\pi^{-1}(D_B)$ if $J_\g$ takes values in $\mc{JH}^*(E,\omega_{H_0,K},n(\g_i,k))_{J_B}^\theta$ on $\mc{U}_{\g_i}.$
\end{definition}

The type $\g$ is obtained from $\bigsqcup \g_i$ by gluing at infinity and (making edges finite), so by the coherence axioms it is enough to specify stabilized data on each subtype. To simplify the compactness argument we assume that our configuration does not contain positive, finite length edges.

\begin{theorem}\label{mcompact}
Let $\g$ be an uncrowded, unbroken type with $\I(\g,\underline{x})\leq 1$ and let $\mc{P}=(P_\Xi)_{\Xi\in\gamma}$ be a collection of coherent, regular, $\pi$-stabilized fibered perturbation data that contains data for all types $\Xi$ from which $\g$ can be obtained by some combination of (collapsing an edge/making an edge finite or non-zero) or (forgetting a ghost component). Then the compactified moduli space $\overline{\mc{M}}_\g(\underline{x},D, P_\g)$ of adapted configurations contains only regular configurations $\gamma$ with broken edges and disk vertices. In particular, there are no sphere bubbles. Moreover, moduli corresponding to vertical disk bubbles come in pairs or can be given opposite orientations so that they cancel in an algebraic sense. 
\end{theorem}

\begin{proof}
Let $u_\nu\in \mc{M}_\g(\underline{x},D, P_\g)$ be a sequence of bounded energy adapted $P_\g$-holomorphic configurations each based on a treed disk $C_\nu$. By Gromov and Floer there is a convergent subsequence to a configuration $u:\tilde{C}\rightarrow (E,L)$. By forgetting unstable ghost components we obtain a curve $u:C\rightarrow (E,L)$ based on a type $\Xi$ that is related to $\g$ via the assumed operations on treed disks, and thus $u$ is $P_\Xi:=\Pi^* P_\g$-holomorphic. We will show that $\Xi$ is $\pi$-stable, that $u$ is $\pi$-adapted in the sense of Definition \ref{adapteddef}, and that $\Xi$ lacks sphere and vertical disk bubbles.

First, we have the (markings) property in Definition \ref{adapteddef}. Each interior marking on $\Xi$ maps to $D$ since this is a closed condition. On the other hand, the fact that every intersection contains a marking follows from topological invariance of intersection number; see the proof of \cite[Theorem 4.27]{CW2}.

Next, we show that $\Xi$ is $\pi$-stable. Assume that there is an unstable vertex for $\Upsilon\pi_*\Xi$ with corresponding surface component $S$. It follows that $u_S$ is a bubble, is $J_{n(\g,k)}$-holomorphic for some stabilized almost complex structure, and has energy at most $n(\g,k)$. From the definition of $\pi$-stabilized data, the (markings) property, and the fact that $\Upsilon\pi_*$ does not forget edges that have positive intersection multiplicity with $D$, it follows that $S$ is actually stable. This is a contradiction.

To show the (non-constant spheres axiom), we notice that in the limit, any sphere bubble must have energy less than $n(\g,k)$. Since the almost complex structure takes values in stabilized data, it follows from item $(1)$ in Definition \ref{stabilizedbyDdef} that any sphere component $S$ contained in $D$ must be contained in a single fiber. Thus, any sphere of $u$ mapping to $D$ is horizontally constant.

It follows that $\Xi$ is $\pi$-stable and that $u$ is $\pi$-adapted. Since $\g$ can be obtained from $\Xi$ by the assumed operations on treed disks, Theorem \ref{transversality} implies that (after restricting to a smaller set of perturbation data $P_\g$) the pullback perturbation data $P_\Xi$ is regular for $\Xi$. It follows that $u:C\rightarrow (E,L)$ lives in a smooth moduli space $\mc{M}_\Xi$ of expected dimension.

Now, we show the absence of sphere bubbles and vertical disk bubbles. Any non-constant sphere bubble gives a configuration of expected dimension two less than $\g$. By regularity of the type $\Xi$, this contradicts the index assumption $\I(\g,\underline{x})\leq 1$.

Suppose there is a marked disk bubble $\underline{D}$ attached to a component $C$. It suffices to check the case when $\I (\Xi,\underline{x})=0$, since otherwise $u$ will be contained in an open set in a moduli space of dimension $1$ and will not effect the properties of the $A_\infty$-algebra. We have that $\mu(u_{\underline{D}})\geq 2$ by assumption. Suppose $[u_C]\neq 0$. Then the configuration $\tilde{u}$ of type $\tilde{\Xi}$ with $\underline{D}$ removed is $\pi$-stable, $\pi$-adapted, and whose moduli space can be given the same perturbation data by the assumption that data is defined on $\Upsilon\pi_*\mc{U}_\g$. Thus, $\tilde{u}$ lives in a smooth moduli space of expected dimension. On the other hand the expected dimension is $\leq -1$, which shows that $\tilde{u}$ is non-existent. 

Finally, we analyze a vertical disk bubble when $C$ is a ghost component. If $u_C$ is not mapped to a critical point, then the configuration without $D$ is regular and of index $\leq -1$. By the same argument as in the above paragraph, this is a contradiction. Thus, $u_C$ must be mapped to a critical point $x_0$, and $u_{\underline{D}}$ is contained in a critical fiber. Since $u_C(\bar{z})$ is also holomorphic, we can give $C$ the opposite complex structure from $\bb{C}$ and form a distinct configuration. This gives the opposite orientation of the determinant line bundle of the linearized operator over $C$, which gives the opposite orientation of the moduli space $\mc{M}_\Xi$. Thus such configurations always cancel in pairs.
\end{proof}

\section{Leray-Serre for Floer Cohomology}
\label{spectralsequencesection}
We derive a spectral sequence from a filtration on base energy that converges to the Floer cohomology of $L$.  First we discuss the coefficient ring, $A_\infty$ structure, grading, weak Maurer-Cartan equation, and family Floer theory. 

We assume that our fibered Lagrangian is relatively spin in the sense of Fukaya-Oh-Ohta-Ono \cite[Ch. 8]{fooo}, so that we may assign orientations to the $0$ and $1$ dimensional moduli spaces. If the reader wishes, they can replace any appearance of $\bb{C}$ with some field of characteristic $p$. In particular if $L$ is not relatively spin then it suffices to use coefficients from $\bb{Z}/2$, and one can ignore the grading if $L$ is not orientable. 

We define the coefficient ring in two variables:
\begin{align*}
\Lambda^2:= \biggl\lbrace \sum_{i,j} c_{ij} q^{\rho_i} r^{\eta_j} \vert &c_{ij}\in \C,\,\rho_i\in \bb{R}_{\geq 0}, (1-\epsilon)\rho_i+\eta_j\geq 0\\
&\#\lbrace c_{ij}\neq 0, \rho_i+\eta_j\leq N\rbrace <\infty \biggr\rbrace
\end{align*}
for a fixed $0<\epsilon \ll 1$ that we will specify later in Lemma \ref{differentialincomplexlemma}. Let $\Lambda_r$ be the subring of $\Lambda^2$
$$\Lambda_r:= \left\lbrace \sum_{i} c_{i}r^{\eta_i} \vert c_{i}\in \C,\,\eta_j\in \R_{\geq 0},\, \#\lbrace i:c_{i}\neq 0, \eta_j\leq N\rbrace <\infty  \right\rbrace$$
and similarly for $\Lambda_q$. Let $\Lambda (r)$ denote the universal Novikov field in $r$, where we remove the restriction that the $\eta_j\geq 0$.

Let $\rho\in \hom (\pi_1(L), (\Lambda^2)^\times)$ be a rank one local system and denote by $\mathrm{Hol}_\rho(u)$ the evaluation $\rho([u\vert_{\partial D}])$.

The symplectic form on $E$ is the weak coupling form $\omega_{H,K}=a_H+K\pi^*\omega_B$ from Theorem \ref{coupling} for some Hamiltonian connection and $K\gg1$. For a holomorphic configuration $u:C\rightarrow (E,L)$, we define the \emph{vertical symplectic area} as
$$e_v(u):=\int_C u^*a.$$

This is a topological invariant, although it may not be positive due to contributions from the curvature of the connection. To avoid mentioning ``$K$" too many times, denote $$e(\pi\circ u):=\int_C K(\pi\circ u)^*\omega_B.$$
Let 
$$e(u):=\int_C u^*\omega_{H,K}.$$
Label the critical points of $f=\pi^*b+g$ on $L$ by $x^j_i$, where $i$ enumerates the critical points $x_i$ for $b$ such that $\pi(x^j_i)=x_i$ (see the construction in subsection \ref{pseudogradients}). Define the \emph{Floer chain complex} of a fibered Lagrangian as
$$CF(L,\Lambda^2):=\bigoplus_{x_i^j\in \T{Crit}(f)} \Lambda^2\langle x^j_i \rangle.$$
\subsubsection{Grading}
Following Seidel \cite{seidelgraded}, we include the notion of a $\bb{Z}/N$ grading on our Lagrangian.

Let $\lag(V,\omega)$ be the Lagrangian Grassmannian associated to a symplectic vector space ($\lag$ for shorthand) and let $N$ be an even integer. The coverings of $\lag$ with deck transformation group $\bb{Z}/N$ are classified by classes in $H^1(\lag,\bb{Z}/N)$. Specifically, there is a well-defined \emph{Maslov class} $\mu_V\in H^1(\lag,\bb{Z})$, so let $\lag^N(V,\omega)$ be the covering space corresponding to the image of $\mu_V$ in $H^1(\lag,\bb{Z}/N)$.

Let $\mc{L}(E)\rightarrow E$ be the fiber bundle with fiber $\mc{L}(E)_b=\t{Lag}(T_pE,\omega)$, the Lagrangian subspaces in $T_pE$. An $N$-fold \emph{Maslov covering} of $E$ is the fiber bundle
$$\mc{L}(E)^N\rightarrow E$$
with fibers $\lag^N(T_b E,\omega)$. For an orientable Lagrangian $L$, there is a canonical section $s:L\rightarrow \mc{L}(E)\vert_{L}$. A $\bb{Z}/N$-\emph{grading} of $L$ is a lift of $s$ to a section 
$$s^N:L\rightarrow \mc{L}^N(E)\vert_{L}.$$
It is natural to ask about the existence of such a grading, for which we have the following answer:
\begin{lemma}[{\cite[Lemma 2.2]{seidelgraded}}]
$E$ admits an $N$-fold Maslov cover if and only if $2c_1(E)\in H^2(E,\bb{Z}/N)$ is zero. 
\end{lemma}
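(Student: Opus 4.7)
The plan is to realize the Maslov covering of $\mc{L}(B)$ as the pullback of a universal $N$-fold cover and identify its obstruction with (the mod-$N$ reduction of) $2c_1(B)$. The key input is the classical fact that on a single Hermitian symplectic vector space $V$, the squared determinant
$$\det{}_{\bb{C}}^2:\lag(V,\omega)\longrightarrow U(1)$$
is well-defined (the phase ambiguity of $\det_{\bb{C}}$ on a real Lagrangian subspace disappears after squaring) and pulls back the positive generator of $H^1(U(1),\bb{Z})$ to the Maslov class $\mu_V$. Consequently the $N$-fold cover $\lag^N(V,\omega)\to \lag(V,\omega)$ is the pullback of the $N$-th power map $U(1)\to U(1)$.

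First I would choose an $\omega$-compatible almost complex structure and Hermitian metric on $B$, which reduces the structure group of $TB$ to $U(n)$ and presents the fibers of $\mc{L}(B)$ as $U(n)/O(n)$. Globalizing the previous paragraph fiberwise, the map $\det_{\bb{C}}^2$ assembles into a bundle map
$$\Phi:\mc{L}(B)\longrightarrow P,$$
where $P$ is the unit circle bundle of the complex line bundle $(\Lambda^{n}_{\bb{C}} TB)^{\otimes 2}$. By construction, $\Phi$ restricts on each fiber to the map inducing $\mu_V$ on $H^1(\cdot,\bb{Z})$, and $c_1(P)=2c_1(B)$.

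Next I would translate the existence of a $\bb{Z}_N$-Maslov cover into a lifting problem for $P$. An $N$-fold cover of $\mc{L}(B)$ that restricts fiberwise to $\lag^N(V,\omega)$ is the same datum as a $U(1)$-bundle $\widetilde{P}\to B$ equipped with an $N$-fold covering $\widetilde{P}\to P$ that is fiberwise the $N$-th power map, because one can then pull back along $\Phi$ to obtain $\mc{L}(B)^N$. Equivalently one is asking for an $N$-th root of the line bundle $(\Lambda^n_{\bb{C}} TB)^{\otimes 2}$ in $\pic(B)$.

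Finally I would identify the obstruction. An $N$-th root of a complex line bundle with Chern class $c\in H^2(B,\bb{Z})$ exists iff $c$ lies in the image of multiplication by $N$ on $H^2(B,\bb{Z})$, i.e. iff the mod-$N$ reduction of $c$ in $H^2(B,\bb{Z}_N)$ vanishes, by the Bockstein associated to $0\to\bb{Z}\xrightarrow{N}\bb{Z}\to\bb{Z}_N\to 0$. Applied to $c=2c_1(B)$ this is exactly the stated condition. The one thing to verify carefully — and the main potential source of error — is the identification $\Phi^*[d\theta/2\pi]|_{\text{fiber}}=\mu_V$ with the correct sign and factor of two; this is where the ``$2$'' in $2c_1(B)$ enters, and the reason the ``squared'' determinant is essential rather than $\det_{\bb{C}}$ itself. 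Once that normalization is in hand, the equivalence is immediate from the lifting criterion for $U(1)$-bundles.
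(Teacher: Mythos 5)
The paper states this lemma only as a citation to Seidel (Lemma 2.2 of \cite{seidelgraded}) and gives no proof, so there is nothing internal to compare with; your argument is correct and is the standard one, via the global $\det_{\bb{C}}^2$ bundle map $\Phi:\mc{L}(B)\to P$ and the $N$-th-root obstruction for circle bundles. The one step you state loosely is the ``same datum'' equivalence between fiberwise $N$-fold Maslov covers of $\mc{L}(B)$ and $N$-th roots of $P$: pulling back along $\Phi$ gives one direction (root $\Rightarrow$ Maslov cover), but the converse requires an additional observation. One way is to note that $\Phi$ induces an isomorphism on $\pi_1$ (apply the five lemma to the map of homotopy long exact sequences of the two fibrations over $B$, using that $\det^2$ is a $\pi_1$-isomorphism on fibers), so that $\Phi^*:H^1(P,\bb{Z}_N)\to H^1(\mc{L}(B),\bb{Z}_N)$ is an isomorphism taking the generator-restricting classes to the $\mu_V$-restricting classes. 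Alternatively, use naturality of the Serre transgression under $\Phi$ to identify the obstruction to extending $\mu_V\bmod N$ from the fiber of $\mc{L}(B)$ with the transgression of the generator in the circle-bundle case, namely $c_1(P)\bmod N=2c_1(B)\bmod N$. Finally, note that the statement as quoted in the paper contains a typo: the target group should be $H^2(B,\bb{Z}_N)$, not $H^2(E,\bb{Z}_N)$.
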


Let $\mu^N\in H^1(\mc{L}(E),\bb{Z}/N)$ be the global Maslov class mod $N$.
\begin{lemma}[{\cite[Lemma 2.3]{seidelgraded}}]
$L$ admits a $\bb{Z}/N$-grading if and only if $0=s^*\mu^N\in H^1(L,\bb{Z}/N)$.
\end{lemma}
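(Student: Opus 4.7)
The plan is to translate the problem into standard obstruction theory for covering spaces and read off the answer from the classification of $\bb{Z}_N$-covers. Observe that a lift $s^N: L_B \to \mc{L}^N(B)|_{L_B}$ of the canonical section $s$ is the same data as a section of the pulled-back cover $s^* \mc{L}^N(B)|_{L_B} \to L_B$. Since $\mc{L}^N(B) \to \mc{L}(B)$ is, by construction, the $\bb{Z}_N$-cover classified by $\mu^N \in H^1(\mc{L}(B), \bb{Z}_N)$, its pullback to $L_B$ along $s$ is a $\bb{Z}_N$-cover classified by $s^* \mu^N \in H^1(L_B, \bb{Z}_N)$.

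First I would verify that the obstruction-theoretic picture is set up correctly: namely, that $s^* \mc{L}^N(B)|_{L_B}$ is a principal $\bb{Z}_N$-bundle over $L_B$ (which is immediate since the deck group of $\mc{L}^N(B) \to \mc{L}(B)$ is $\bb{Z}_N$ and this property is preserved under pullback), and that sections of a connected principal $\bb{Z}_N$-bundle correspond exactly to trivializations. Thus $s^N$ exists iff the pulled-back cover is (globally) trivial.

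Next, I would invoke the standard classification: a principal $\bb{Z}_N$-bundle over $L_B$ is trivial iff its classifying class in $H^1(L_B, \bb{Z}_N)$ vanishes. Since pullback of classifying classes is functorial, the class of $s^* \mc{L}^N(B)|_{L_B}$ is precisely $s^* \mu^N$. Hence a grading exists iff $s^* \mu^N = 0$, which is the claim.

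The only delicate point is checking that the global class $\mu^N$ on $\mc{L}(B)$ (as a fiber bundle, not just a vector space) really does restrict fiberwise to the universal Maslov class mod $N$ and classifies $\mc{L}^N(B) \to \mc{L}(B)$ as a principal $\bb{Z}_N$-bundle; this is the bundle version of the vector space statement recalled just before the lemma. Once this is granted, the argument is purely formal. I do not expect any geometric input from the Lagrangian structure of $L_B$ beyond the fact that it supplies the canonical section $s$.
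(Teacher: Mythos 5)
Your argument is correct and is essentially the standard obstruction-theoretic proof found in Seidel's paper (the paper here does not reprove the lemma, only cites it): a lift of $s$ along the $\bb{Z}_N$-cover $\mc{L}^N(B)\to\mc{L}(B)$ is the same as a section of the pullback cover $s^*\mc{L}^N(B)\to L_B$, which is classified by $s^*\mu^N$ and admits a section iff trivial iff the class vanishes. One small wording quibble: you write that ``sections of a connected principal $\bb{Z}_N$-bundle correspond exactly to trivializations'' --- a connected total space (for $N>1$) precludes sections entirely; what you mean, and what the argument needs, is that a principal $\bb{Z}_N$-bundle over a (connected) base admits a global section iff it is trivial.
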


Henceforth, we fix an $N$-fold grading $s^N$ on $L$, denoted $\vert\cdot\vert$. 


\subsubsection{$A_\infty$-algebra}\label{ainftyalgebrasubsection} Next, we define the $A_\infty$ maps for $L$. Let $\gamma$ denote the collection of index zero configurations $(\g,\underline{x})$ that are $\pi$-stable and uncrowded. Note that the number of index $0$ and $1$ uncrowded configurations with $\pi$-stable pieces that we can obtain from a single index $0$ configuration $\g$ via the moves (collapsing an edge/making an edge finite or non-zero), (forgetting a ghost component), and their inverses is finite. Hence one gets a partition of $\gamma$ into finite sets, to each of which Theorems \ref{transversality} and \ref{mcompact} are applicable. Thus we can choose a fibered, regular, coherent, and stabilized perturbation datum $(P_\g)_{\g\in\gamma}$ associated to a $\pi$-stabilizing divisor $D=\pi^{-1}(D_B)$ of large enough degree. 

Let $CF(L,\Lambda^2)[2-n]$ denote the module with grading $CF^*(L,\Lambda^2)[2-n]=CF^{*+2-n}(L,\Lambda^2)$. For $n\geq 0$ and configuration types $(\g,x_1,\dots,x_n,x_i^j)$ that are unbroken, uncrowded, and $\pi$-adapted, define the \emph{$A_\infty$-algebra} of $L$ as the graded maps
\begin{align}\label{a_inftymaps}
\mu^n&:CF(L,\Lambda^2)^{\otimes n}\rightarrow CF(L,\Lambda^2)[2-n]\\
\mu^n&(x_1\otimes\dots\otimes x_n)=\nonumber \\
&\sum_{\substack{x^j_i, [u]\in\mc{M}_\g(\underline{x},x_i^j,D,P_\g)\\
 \I(\g,\underline{x},x_i^j)=0}} (-1)^{\lozenge} (\sigma(u)!)^{-1}\mathrm{Hol}_\rho(u)r^{e_v(u)}q^{e(\pi\circ u)}\varepsilon(u)x_i^j\nonumber
\end{align}
where $\sigma(u)$ is the number of interior markings on $\g$, $\lozenge=\sum_{i=1}^n i\vert x_k \vert$, and $\varepsilon(u)$ is $\pm 1$ depending on the orientation of the (zero dimensional) moduli space that contains $[u]$. Take
\begin{equation}\label{floerdifferential}
\delta:=\mu^1
\end{equation}
to be the \emph{Floer ``differential"} (we most likely do not have $\delta^2=0$ in its current form, and we elaborate on this later). For technical reasons in the proof of Theorem \ref{main theorem}, we need the $\epsilon$ in the definition of the Novikov ring, and we have the following lemma:
\begin{lemma}\label{differentialincomplexlemma}
$\delta(x)\in CF(L,\Lambda^2)$ for $\epsilon$ small enough.
\end{lemma}
\begin{proof}
The transversality \ref{transversality} and compactness \ref{mcompact} theorems  tells us that for a fixed energy $\varrho$, there are finitely many $J$-holomorphic configurations with $\int_C u^*\omega_{H,K}\leq \varrho$. Thus, the output satisfies the criterion $\#\lbrace i,j:c_{ij}\neq 0, e(\pi\circ u)+e_v(u)\leq N\rbrace <\infty $.

For $0<\epsilon\ll1$, the form $\omega_{H,(1-\epsilon)K}$ is also a non-degenerate symplectic form. If $\mathcal{P}_\g^{reg}(\omega_{H,K})$ is the comeager set of tamed, regular, $\pi$-stabilized perturbation data for $\omega_{H,K}$ from Theorems \ref{transversality} and \ref{mcompact}, we have that $$\mathcal{P}^{reg}_{\g}(\omega_{H,(1-\epsilon)K})\subset \mathcal{P}^{reg}_{\g}(\omega_{H,K})$$ as a non-empty open subset by construction of $J_{\sigma,J_B}^{J_F}$ and Lemma \ref{tamingconditionlemma}. Therefore, one can choose perturbation data from a slightly smaller open set. By positivity of energy, it follows that we must have $(1-\epsilon)e(\pi\circ u)+e_v(u)\geq 0$ for $P_\g\in \mathcal{P}^{reg}_{\g}(\omega_{H,(1-\epsilon)K})$ with equality only if $u$ is constant.

Equivalently, one can choose $K'\geq \dfrac{K}{(1-\epsilon)}$ and use perturbation data for $(E,\omega_{H,K})$ for the symplectic manifold $(E,\omega_{H,K'})$.
\end{proof}
In the sequel to this paper \cite{fiberedpotential}, we show that the maps $\mu^n$ satisfy the $A_\infty$ relations:
\begin{align*}
0=\sum_{\substack{n,m\geq 0\\ n+m=d}} (-1)^{n+\sum_{i=1}^n\vert x_i\vert}\mu^{d-m+1}&(x_1\otimes\dots \otimes x_n \\&
\otimes \mu^m(x_{n+1}\otimes\dots\otimes x_{n+m})\otimes\dots\otimes x_d).
\end{align*}

One can also see \cite[Theorem 4.31]{CW2} for a proof of this fact. The proof uses the compactness from Theorem \ref{mcompact}, and the observation that the boundary of the $1$-dimensional moduli space only consists of breaking of trajectories or disk bubbling. Counting such configurations with appropriate signs and setting equal to $0$ gives an equation that is the $A_\infty$ relations. 

\subsubsection{Weak Maurer-Cartan solutions} Due to disk bubbling appearing in Theorem \ref{mcompact}, we may not have $\delta^2=0$. However, the statement and proof of Theorem \ref{main theorem} remain the same if we perturb the $A_\infty$-algebra by a solution $mc$ to the \emph{weak Maurer-Cartan equation}, which we describe in this subsection.

For $v\in CF(L)$ denote $\val{v}$ as the $q$-valuation of $v$, and let $\sigma^l(v)\in CF(L,\Lambda(r))$ be sum of terms multiplied by $q^{\val{v}+l}.$ If $v=0$, we set $\val{v}=\infty$. For example, the element $v=q^\rho\sum_{j\geq 0} c_{0j}r^{\eta_j}x_j + O(q^{\rho+1})$ has $\T{val}_q(v)=\rho$ and $\sigma^0 (v)=\sum_{j\geq 0} c_{0j}r^{\eta_j}x_j$.

Let $\mathbf{e} \in CF^0(L,\Lambda^2)$ be such that
\begin{gather}\label{strictunitdef1}
(-1)^{\vert x\vert}\mu^2(x\otimes \mathbf{e})=\mu^2(\mathbf{e}\otimes x)=x\quad \forall x\in CF(L)\; \T{and}\\
\mu^n(x_1\otimes\dots \otimes \mathbf{e}\otimes\dots \otimes x_{n-1})=0 \quad \T{for}\, n\neq 2. \label{strictunitdef2}
\end{gather}
We call an element $\mathbf{e}$ a \emph{strict unit}. Charest-Woodward \cite{CW2} constructs strict units for $(CF(L,\Lambda_t),\mu^n)$ when $L$ is rational and divisorial perturbations are used. This is outside the scope of this paper, although we give a sketch of the construction in the next subsection; see also \cite{fiberedpotential}.

Let $mc\in CF^{odd}(L,\Lambda^2)$ be such that $\val{mc}> 0$ or $\T{val}_r\sigma^0(mc)>0$. For such a $mc$, we can deform the $A_\infty$-algebra to an equivalent one
$$\mu^n_{mc}(x_1\otimes\dots\otimes x_n):=\sum \mu^m (mc\otimes\dots\otimes x_1\otimes \dots \otimes x_2\otimes\dots \otimes mc)$$
where the sum is over all possible insertions of $mc$, and for which $\mathbf{e}$ is also a strict unit. The weak Maurer-Cartan equation for $(CF(L,\Lambda^2),\mu^n)$ is
\begin{equation}
\mu(mc):=\sum_{n=0}^\infty \mu^n(mc^{\otimes n})\in \Lambda^2\cdot \mathbf{e}.
\end{equation}
By the $A_\infty$ relations we have that $(\mu^1_{mc})^2=0$.

Henceforth, we assume that we have such a $mc$ and that $\delta^2=0$. In \cite{fiberedpotential} we show that if $L_F$ is monotone, there are no holomorphic disks in $(B,L_B)$ with Maslov index less than $2$, and $L$ is trivially fibered as an ambient subbundle then there is a natural solution $mc$ that one can choose for $L$. 

\subsubsection{Filtration and Floer cohomology} 
Define the \emph{Floer cohomology} of $L$ with respect to a rank one local system, Maurer-Cartan solution, orientation, relative spin structure, grading, and coherent, stabilized, regular fibered perturbation datum to be
$$HF^*(L,E,\Lambda^2,mc):=H^*(CF(L,\Lambda^2),\mu^1_{mc}),$$
denoted $HF^*(L,\Lambda^2,mc)$ when there is no confusion. In section \ref{invariancesection}, we will discuss why this is independent of choices and how it obstructs the Hamiltonian displacement of $L$.

We filter the chain complex by $q$-valuation: Since $L_B$ is rational, we have that the image of the energy homomorphism $$K\omega_B:H_2(B,L_B,\bb{Z})\rightarrow \bb{R}$$ is a discreet subgroup $\lambda_0\cdot \bb{Z}$ with positive generator $\lambda_0$. Assume that the powers of $q$ appearing in $mc$ and $\rho$,  together with $\lambda_0\cdot \bb{Z}$, also generate a discreet subgroup of $\mathbb{R}$, and define $\lambda$ to be its positive generator.

Pick $0< \alpha \leq\lambda$ and let 
$$\mathcal{F}^k CF(L)=\bigoplus_{x_i^j\in \crit (f)} \mc{F}^k\Lambda^2 \langle x_i^j\rangle$$
where 
$$\mc{F}^k\Lambda^2:= \left\lbrace a\in \Lambda^2 \vert \val{a}\geq k\alpha\right\rbrace.$$
Note that $\delta$ preserves this filtration by non-negativity of energy in $B$ and the fact that $\val{mc}\geq 0$. Thus the filtration  gives rise to a spectral sequence $\E_s$ whose convergence is the subject of Theorem \ref{main theorem}. In order to compute the second page of said spectral sequence, we introduce some notions in the next subsection.

\subsection{Family Floer cohomology}
We define an invariant for the family $$(F,L_F) \rightarrow E\vert_{L_B}\rightarrow L_B$$ in a similar vein as Hutchings \cite{hutchings} and others.

Let $$gr^{\mc{F}}_kCF(L,\Lambda^2):=\mc{F}^k CF(L,\Lambda^2)/\mathcal{F}^{k+1} CF(L,\Lambda^2)$$ denote the $k^{th}$ filtered piece of $CF(L,\Lambda^2).$ Define the family of maps
\begin{equation}\label{familydifferentialgeneral}
\delta_1^k: gr^{\mc{F}}_kCF(L,\Lambda^2)\rightarrow gr^{\mc{F}}_kCF(L,\Lambda^2)
\end{equation}
as being induced from $\delta$, that we collectively refer to as $\delta_1$. Note that $(\delta_1)^2=0$ if we assume that $\delta^2=0$. If $\val{mc}>0$ then $\delta_1^k$ is induced from
\begin{gather}
\delta_1:CF(L,\Lambda^2)\rightarrow CF(L,\Lambda^2) \label{familydifferentialgeneralform} \\
\label{familydifferential}\delta_1(x)=\sum_{\substack{x_i^j, [u]\in\mc{M}_\g(x,x_i^j,D,P_\g)_0\\ \I(\g,x ,x_i^j)=0,\,e(\pi\circ u)=0}} (-1)^{|x_i^j|} \mathrm{Hol}_{\rho^0}(u)r^{e_v(u)}\varepsilon(u)x_i^j
\end{gather}
defined on generators and extended linearly, where $\rho^0:=\sigma^0(\rho)\in \hom (\pi_1(L),(\Lambda^2)^\times)$ is the part of the rank one local system with $q$-degree $0$. In general, $\delta^k_1$ is induced from the part of $\delta$ that does not increase $q$ degree, and we elaborate on this in section \ref{secondpagecalcssection}. In the proof of Theorem \ref{main theorem}, we will show that
$$CF(L,\Lambda^2)\cong \prod_{k\geq 0} gr^{\mc{F}}_kCF(L,\Lambda^2)$$
as filtered $\Lambda^2$-modules. Thus we define
$$HF(L_F,E\vert_{L_B},\Lambda^2):=\prod_{k\geq 0} H^*(gr^{\mc{F}}_kCF(L,\Lambda^2), \delta^k_1)$$
as the cohomology of $CF(L,\Lambda^2)$ with respect to $\delta_{1}$, which we will refer to as the \emph{family Floer cohomology} of the bundle $E\vert_{L_B}$ with respect to the input data (local system, perturbation data, Maurer-Cartan element).

As we will see, the family Floer cohomology will form the first page of the spectral sequence that is the content of main Theorem of this paper:
 
\begin{theorem}\label{main theorem}
Let $F\rightarrow E\rightarrow B$ be a symplectic Mori fibration as in Definition \ref{morifibrationdefinition} and $L_F\rightarrow L\rightarrow L_B$ a fibered Lagrangian as in Definition \ref{fiberedlagrangiandefinition}. Let $D = \pi^{-1}(D_B)$ where $D_B$ is a stabilizing symplectic divisor for $L_B$ of large enough degree, in the sense of Definitions \ref{stabilizing divisor definition} and \ref{largeenoughdegreedefinition}. Choose a fibered, regular, coherent, and stabilized perturbation datum $(P_\g)_{\g\in\gamma}$ from Theorems \ref{transversality} and \ref{mcompact} (in the sense of Definitions \ref{perturbationdatadef}, \ref{regulardatadefinition}, \ref{coherentdefinition}, and \ref{stabilizedbyDdef} respectively) for the collection of index zero configurations $\gamma$ that can be counted in the $A_\infty$-algebra of $L$. Assume we have a solution $mc$ to the Maurer-Cartan equation. Then there is a spectral sequence $\E^*_s$ that converges to $HF^*(L, \Lambda^2,mc)$ whose second page is 
$$HF(L_F,E\vert_{L_B},\Lambda^2).$$
\end{theorem}

\begin{proof}
We show that the criteria from \emph{The Complete Convergence Theorem} from Weibel \cite[section 5.5]{weibel} are satisfied. Summarily, we show that the filtration is \emph{exhaustive} and \emph{complete}, and that the induced spectral spectral sequence stabilizes after finitely many pages. We suppress some notation by setting $CF(L):=CF(L,\Lambda^2)$.

The spectral sequence is defined as follows: Let
\begin{align*}
&Z^{k}_s = \left\{ x\in \mathcal{F}^k CF(L)\;\vert\; \delta(x)\in \mathcal{F}^{k+s-1} CF(L)\right\} + \mathcal{F}^{k+1} CF(L)\\
&B^{k}_s= \left\{ \delta(\mathcal{F}^{k-s+2} CF(L))\cap\mathcal{F}^kCF(L)\right\} + \mathcal{F}^{k+1} CF(L), \; \T{and} \\
&\E^{k}_s=Z^{k}_s/B^{k}_s.
\end{align*}
We get a family of differentials
$\delta_s^k:\mc{E}_s^k\rightarrow \mc{E}_s^{k+s-1}$
that are induced from $\delta$.

The filtration is exhaustive if $CF(L)=\cup_{k\geq 0} \mc{F}^k CF(L)$, which is clear in this case.

The filtration is complete if 
$$\lim_{\longleftarrow}CF(L)/\mc{F}^k CF(L)=CF(L).$$
For simplicity, let us first assume that the rank of $CF(L)$ over $\Lambda^2$ is one, or equivalently we show that the filtration on $\Lambda^2$ is complete. The inverse system is given by the projection 
\begin{gather*}
\pi_{lk}:CF(L)/\mc{F}^k CF(L)\rightarrow CF(L)/\mathcal{F}^l_q CF(L),\\
\sum_{\rho_i\leq k}f_i(r)q^{\rho_i}\mapsto  \sum_{\rho_i\leq l}f_i(r)q^{\rho_i}
\end{gather*}
for $k\geq l$ by forgetting the terms of $q$-degree $\geq l$. Here,
\begin{gather}
f_i(r)=\sum_j c_{ij}r^{\eta_{ij}}\in \Lambda(r), \label{f_i-form}\\
\eta_{ij}\geq -(1-\epsilon)\rho_i,\, \text{and} \nonumber \\ 
\lim_{i\rightarrow \infty}\rho_i=\infty. \nonumber
\end{gather}
The construction of the inverse limit is
\begin{align*}
\lim_{\longleftarrow} C&F(L)/\mc{F}^k CF(L)=\\
&\Big\{\Big(f_i(q,r)\Big)\in\prod_{i=0}^{\infty}CF(L)/\mathcal{F}^i_q CF(L): \pi_{jk}(f_k(q,r))=f_j(q,r)\; \forall j\leq k\bigg\}.
\end{align*}
Hence, for a single element we have that each $f_k(q,r)=\sum_{\rho_i\leq k} f_i(r)q^{\rho_i}$ where the $f_i(r)$ are of the form \eqref{f_i-form} and do not depend on $k$ when $i\leq k$.
Surely we have an inclusion $$CF(L)\subset \lim_{\longleftarrow}CF(L)/\mc{F}^k CF(L)$$ given by collecting all of the degree $\rho_i$ terms,
$$\sum_{i,j} c_{ij} q^{\rho_i} r^{\eta_i}=\sum_{i}f_i(r)q^{\rho_i}\mapsto \Big(\sum_{\rho_i\leq k}f_i(r)q^{\rho_i}\Big).$$
A candidate for the inverse map is given by
\begin{equation}\label{inversecandidate}
\left(\sum_{\rho_i\leq k}f_i(r)q^{\rho_i}\right)\mapsto \sum_{i=0}^{\infty}f_i(r)q^{\rho_i}
\end{equation}
and we want to know that the infinite sum converges in $\Lambda^2$. In other words, for $f_i(r)=\sum_{j=0}^\infty c_{ij}r^{\eta_{ij}}$ we check that $\#\left\{ c_{ij}\neq 0:\rho_i+\eta_j\leq N \right\} <\infty$. Since $\eta_j\geq -(1-\epsilon)\rho_i$ we have that $\rho_i+\eta_j\geq \epsilon\rho_i$, which goes to $\infty$ as $i\rightarrow \infty$. It follows that the sum in \eqref{inversecandidate} converges and thus $\Lambda^2$ is complete with respect to the filtration on $q$-degree.

When $\T{rank}_{\Lambda^2}(CF(L))\geq 2$ use the fact that the filtration and inverse system projections commute with the direct sum decomposition, so that the inverse limit is the direct sum of the inverse limits, ie
\begin{align*}
\lim_{\longleftarrow} CF(L)/\mc{F}^k CF(L)&\cong \lim_{\longleftarrow}\bigoplus_{i=1}^{n}\Lambda^2/\mc{F}^k\Lambda^2\langle x_i\rangle\\ &\cong\bigoplus_{i=1}^{n}\lim_{\longleftarrow}\Lambda^2/\mc{F}^k\Lambda^2\langle x_i\rangle.
\end{align*}
Thus, the filtration is complete.

Next, we need to see that the spectral sequence stabilizes after finitely many steps, ie~that $\delta^k_s=0$ for $s\gg 1$ and uniformly in $k$. The idea to showing this is similar to \cite[Proposition 6.3.9]{fooo}, although their line of reasoning does not apply in our case since the filtration on $q$ is less forgiving than the total filtration.
\begin{proposition}\label{zeta}
There exists a $\zeta>0$ such that 
$$\delta(CF(L))\cap \mc{F}^k CF(L)\subset \delta(\mathcal{F}^{k-\zeta} CF(L))$$
that doesn't depend on $k$.
\end{proposition}
\begin{proof}[Proof of proposition]
Let $\Delta$ denote the image $\delta(CF(L))$ and for the generating critical points $x_i$ let $v_i:=\delta(x_i)$ be generators for $\Delta$ over $\Lambda^2$, reordered such that $\val{v_i}\leq \val{v_{i+1}}$. Denote $\val{v_i}=:\rho_i$.

Let
\begin{equation}\label{badformv}
v=\sum_{i=1}^m a_i v_i=\delta(x)\in \Delta\cap \mc{F}^k CF(L)
\end{equation}
with $\val{v}=k$. Note that if there is no cancellation among the $\sigma^0(v_i)$, then we could set $\zeta=\max_{i} \lbrace \rho_i-\T{val}_q(x_i)\rbrace$.

For a contradiction, suppose that there is no such $\zeta$. First assume that $v$ takes the form 
\begin{equation}\label{goodformv}
\sum_{i=1}^n b_i v_i
\end{equation}
for $b_i\in \Lambda_r$. There is a sequence of $b_i^j\in \Lambda_r$ such that
$$0\neq v^j=\sum_{i=1}^n b_i^j v_i$$
has $q$-valuation at least $j$. This gives equations
$$0=\sum_{i=1}^n b_i^j \sigma^{l-\val{v_i}}(v_i)$$
whenever $l+\rho_n\leq j$ (where $\sigma^{-N}(v):=0$). The elements $\sigma^l(v_i)$ generate a submodule $M$ of 
$$\bigoplus_i \Lambda(r) x_i$$
over $\Lambda(r)$. Since the later is a field $M$ is a finite dimensional vector space, and we have a sequence of linear maps
$$\underline{b}^j:M\rightarrow \bigoplus_i \Lambda(r) x_i$$
with $\ker \underline{b}_{j-1}\subset\ker \underline{b}_{j}$ for large enough $j$. Thus, the kernel stabilizes and there must be a $c$ so that 
$$0=\sum_{i=1}^n b_i^c \sigma^{l-\val{v_i}}(v_i)$$
for all $l$ and thus
$$0=\sum_{i=1}^n b_i^c v_i.$$
This is a contradiction since we assumed that $v^j\neq 0$. Taking the max over all possible sums of $(v_i)_i$, there is a $\zeta$ so that
$$\widehat{\Delta}\cap \mc{F}^\zeta CF(L)=0$$
where $\widehat{\Delta}$ is the $\Lambda_r$ module generated by elements of the form \eqref{goodformv}. 

Next, let $v$ be in the form \eqref{badformv} with $\rho=\min_i \val{a_i}$ so that $x\in \mc{F}^{\lfloor\frac{\rho}{\alpha}\rfloor} CF(L)$. Let $i_j$ index the $a_i$'s with $\val{a_{i_j}}=\rho$. Define $\eta=(\epsilon-1)\rho$ and let $b_i=\sigma^0(a_{i_j})r^{-\eta}\in \Lambda_r$. 

Without loss of generality we can assume that
$$\sum_{j=1}^n b_{i_j}q^{\rho}r^{\eta}v_{i_j}\neq 0$$
for otherwise we could just replace the $a_{i_j}$'s with something of higher $q$-valuation. It follows that there are $b_i^k\in \Lambda_r$, $\rho_k$, and $\eta_k$ so that
$$x-\sum_{k=1}^n\sum_{i=1}^m b_{i}^kq^{\rho_k}r^{\eta_k}x_{i}\in \mc{F}^{\lfloor\frac{\rho}{\alpha}\rfloor+1}CF(L)$$
with $\rho_1=\rho$, $b_j^1=b_{i_j}$, $\eta_1=\eta$, and $\rho_n< \rho+\alpha$. Hence on $\mc{E}_s$, $v$ has the form
\begin{equation}\label{newformv}
\sum_{j=1}^n\sum_{i=1}^m b_{i}^k q^{\rho_k}r^{\eta_k}v_{i}.
\end{equation}
By definition of the filtration step $\alpha$, we have that the different powers of $q$ in the $v_i$ differ by at least $\alpha$. Hence there are no cancellations between summands involving different $\rho_i$. In particular since
\begin{equation}\label{leadingthing}
\sum_{j=1}^n b_{i_j}q^{\rho}r^{\eta}v_{i_j}\neq 0
\end{equation}
and terms here do not interact with terms involving $\rho_i$, $i\geq 2$, it follows that $v$ has $q$-valuation at most that of \eqref{leadingthing}. By the previous case, it follows that $v\in \mc{F}^{\rho+c}CF(L)$ for $c\leq\zeta$.

\end{proof}
Finally, to show that spectral sequence stabilizes, let $s\geq \zeta+2$ and $\chi\in \mc{E}^k_s$ with some representative $\hat{\chi}\in Z^k_s\setminus \mc{F}^{k+1}CF(L)$ so that $$\delta(\hat{\chi})\in CF(L)\cap\mathcal{F}^{k+s-1}CF(L).$$ By Proposition \ref{zeta} we have $$\delta(\hat{\chi})\in\delta(\mathcal{F}^{k+s-1-\zeta}CF(L))\subset\delta(\mathcal{F}^{k+1}CF(L)).$$ Thus, $\delta^k_s(\chi)=0$ in $\mc{E}_s^{k+s-1}$. This shows that the spectral sequence terminates after finitely many steps.

Denote the stable module of $\mc{E}^k_s$ by $\mc{E}_\infty^k$. From the filtration on the complex $CF(L)$, we get an associated filtration on $HF(L,\Lambda^2)$. It follows from \cite[Theorem 5.5.10]{weibel} that we have
\begin{equation}\label{filteredpiecedef}
\mc{E}_\infty^k\cong \mc{F}^k HF^*(L,\Lambda^2)/ \mc{F}^{k+1} HF^*(L,\Lambda^2)=:gr_k^\mc{F} HF^*(L,\Lambda^2)
\end{equation}
(in this case, we replace the assumptions \emph{bounded above} and \emph{regular} with the fact that $\mc{E}^k_s$ stabilizes uniformly in $k$). Hence $\prod_k\mc{E}^k_\infty\cong \prod_k gr_k^\mc{F} HF^*(L,\Lambda^2).$ This proves the theorem modulo any reference to the second page. 

The fact that the second page is $$HF(L_F,E\vert_{L_B},\Lambda^2)$$
follows from the definition of the filtration and the differentials $\delta_1^k$.
\end{proof}

\begin{remark}\label{ainftyproductremark}
By the $A_\infty$-relation for $d=2$, we have
\begin{align*}
0=\pm \mu^1(\mu^2(x_1\otimes x_2))\pm \mu^2(\mu^1(x_1)\otimes x_2)\pm \mu^2(x_1 \otimes\mu^1(x_2))\\ \pm \mu^3(\mu^0 \otimes x_1 \otimes x_2)\pm \mu^3(x_1 \otimes\mu^0 \otimes x_2)&\pm \mu^3(x_1 \otimes x_2 \otimes\mu^0),
\end{align*}
so that $\mu^2_{mc}$ descends to $HF(L,\Lambda^2)$ as a product, and in particular $\mu_{mc}^2(\mathbf{e},x)=x$ on cohomology. Moreover, 
$$\mu^2:\mc{F}^{k_1}CF(L)\otimes \mc{F}^{k_2}CF(L)\rightarrow \mc{F}^{k_1+k_2}CF(L)$$
so the relation tells us that $\mu^2_{mc}(x_1\otimes x_2)$ is defined on $\E_s$ if both $x_1$ and $x_2$ are, with
$$\mu^2_{mc}:\E_s^{k_1}\otimes \E_s^{k_2}\rightarrow \E_s^{k_1+k_2}.$$
\end{remark}

\subsection{Second page computations}\label{secondpagecalcssection}
We use the formulation of the second page and other observations to give a few geometric corollaries of Theorem \ref{main theorem}.
 
The map $\delta_1$ also arises as the first map in a certain $A_\infty$-algebra. To make notation easier, let us assume that for the filtration step and energy quantization we have $\alpha=\lambda=1$ for this subsection. Decompose 
\begin{equation}\label{filtrationdecomp}
CF(L,\Lambda^2)\cong CF(L,\Lambda_r)\bigoplus \mc{F}^{1} CF(L,\Lambda^2)
\end{equation}
as $\Lambda_r$-modules. We have the \emph{vertical $A_\infty$-algebra} of the family $E\vert_{L_B}$:
\begin{align}
&\mu_{vert}^{n}:CF(L,\Lambda^2)\rightarrow CF(L,\Lambda^2) \label{vertainftydef} \\
\mu_{vert}^n(x_1\otimes\dots\otimes x_n)=\nonumber \\ &\sum_{\substack{x^j_i, [u]\in\mc{M}_\g(\underline{x},x_i^j,P_\g)\\ \I(\g,\underline{x},x_i^j)=0,\; e(\pi\circ u)=0}} (-1)^{\lozenge} (\sigma(u)!)^{-1}\mathrm{Hol}_{\rho^0}(u)r^{e_v(u)}\varepsilon(u)x_i^j. \nonumber
\end{align}
\begin{lemma} The maps $\mu^n_{vert}$ satisfy the $A_\infty$ relations.
\end{lemma}
\begin{proof} It is enough to check on $CF(L,\Lambda_r)$, since the $\mu^n$ descend to $gr_k^\mc{F} CF(L)$, $CF(L)\cong \prod_k gr_k^\mc{F} CF(L)$, and we have a bijections 
$$q^\rho r^{(\epsilon-1)\rho}:\mc{F}^k CF(L)\rightarrow \mc{F}^{k+\rho} CF(L)$$
that also descend to the associated filtered pieces. Thus $\mu^n_{vert}\vert_{CF(L,\Lambda_r)}$ is simply the projection of $\mu^n\vert_{CF(L,\Lambda_r)}$ onto the first factor in \eqref{filtrationdecomp}. Since we are assuming that the $A_\infty$ relations hold for the $\mu^n$, they must also hold for the $\mu^n_{vert}$. 
\end{proof}
Note that 
\begin{equation}\label{Ainftyfiltration}
\mu^n(x_1\otimes\dots\otimes x_n)\in \mc{F}^{\val{x_1}+\dots+\val{x_n}}_q CF(L,\Lambda^2)
\end{equation}
for the reason that we can factor out $q^{\val{x_i}}r^{(\epsilon-1)\val{x_i}}$ from $x_i$.

\subsubsection{Framework on units}
Charest and Woodward \cite{CW2} introduce strict units for $A_\infty$-algebras of $L$ that use divisorial perturbations. We give a brief overview of the construction and provide reasoning as to why it carries over to the case of fibered perturbation data.

Assume that we chose $b$ such that there is a unique $x_M\in \crit(b)$ with $\I(x_M)=0$, and similarly for $g\vert_{F_{x_m}}$ so that $x^M_M$ denotes the unit index zero critical point of $f$. By introducing weighted, metric treed disks equipped with weighted, forgettable, or unforgettable edges, one can replace $CF(L,\Lambda^2)$ with
$$CF^{unital}(L,\Lambda^2):=\bigoplus_{x^M_M\neq x_i^j\in \T{Crit}(f)} \Lambda^2\langle x^j_i \rangle\bigoplus \Lambda^2\langle x^\vee,x^\triangledown,x^\blacktriangledown\rangle$$
where $\I (x^\blacktriangledown)=\I (x^\triangledown)=0$ and $\I (x^\vee)=-1$, which we collectively refer to as the \emph{algebraic maxima}. The $A_\infty$-algebra is defined so that $x^\triangledown$ is a strict unit, $x^\blacktriangledown$ takes the place of $x_M^M$, and we have the formula
\begin{align}\label{strictunitrelation} \mu^1(x^\vee) = & x^\triangledown-x^\blacktriangledown \\&+ \sum_{\substack{x_0, [u]\in\mc{M}_\g(x^\vee,x_0,P_\g)\\
 \I(\g,x^\vee,x_0)=0,\, e(u)>0}}(-1)^{\lozenge} (\sigma(u)!)^{-1}\mathrm{Hol}_\rho(u)r^{e_v(u)}q^{e(\pi\circ u)}\varepsilon(u)x_0; \nonumber
 \end{align}
see \cite[4.35]{CW2}. Necessarily $\sum I (u_i)\leq 1$ where the $u_i$ make up holomorphic disks for any counted configuration $\g$, assuming that we perturb $J_D$ in such a way that every $u_i$ intersects $D$ transversely.


Henceforth, we will assume that $x^\triangledown=:\mathbf{e}$ is a strict unit for $\mu^n$, and thus it is for $\mu^n_{vert}$.

Write the weak Maurer-Cartan solution $mc$ for $\mu^n$ as $mc=mc^0+mc^1$, where $mc^0\in CF(L,\Lambda_r)$ and $mc^1\in \mc{F}^1_q CF(L,\Lambda^2)$. Recall that we assumed $\T{val}_r(mc^0)>0$ (in particular, it could be $\infty$).
\begin{lemma}\label{verticalmcsolutionlemma}
$\mu_{vert}(mc^0)=W_{vert} \cdot \mathbf{e}$ for some $W_{vert}\in \Lambda_r$, and $\delta_1:\mc{E}_1\rightarrow \mc{E}_1$ is induced by $\mu^1_{vert, mc^0}$ where $(\mu^n_{vert, mc^0})$ is the vertical $A_\infty$-algebra deformed by $mc^0$.
\end{lemma}
\begin{proof}
For some $W\in \Lambda^2$, we have 
$$W\cdot \mathbf{e}=\mu(mc^0+mc^1)=\mu(mc^0)+O(q)$$
where $O(q)\in \mc{F}^1_q CF(L,\Lambda^2)$. This follows from multilinearity of each $\mu^n(mc^0+mc^1\otimes\dots\otimes mc^0+mc^1)$ and the observation of \eqref{Ainftyfiltration}. Since $mc^0\in CF(L,\Lambda_r)$, $\mu_{vert}(mc^0)$ is the projection of $\mu(mc^0)$ onto $CF(L,\Lambda_r)$ in \eqref{filtrationdecomp}. The first claim follows since $\mathbf{e}$ lives in $CF(L,\Lambda_r)$.

To prove the second claim, it follows that $mc^0$ gives a weak Maurer-Cartan solution for $\mu^{n}_{vert}$ with respect to $\mathbf{e}$. We have that $\delta_1=\mu^1_{vert,mc}$ by definition of $\delta_1$, and $\mu^1_{vert, mc}=\mu^1_{vert, mc^0}$ on $\mc{E}_1$ since $\delta_1$ can be written unambiguously as the family \eqref{familydifferentialgeneral}.
\end{proof}
We can write $\mu^{1}_{vert,mc^0}$ naturally as a family
$$\mu^{1,k}_{vert, mc^0}:gr^\mc{F}_k CF(L,\Lambda^2)\rightarrow gr^\mc{F}_kCF(L,\Lambda^2),$$
so by Lemma \ref{verticalmcsolutionlemma} it is clear that
\begin{gather*}gr_k^\mc{F} H(L_F,E\vert_{L_B},\Lambda^2):=\mc{F}^k H(L_F,E\vert_{L_B},\Lambda^2)/ \mc{F}^{k+1} H(L_F,E\vert_{L_B},\Lambda^2) \\=H(gr^{\mc{F}}_kCF(L,\Lambda^2), \mu^{1,k}_{vert,mc^0})=H(gr^{\mc{F}}_kCF(L,\Lambda^2), \delta^k_1).
\end{gather*}
As in the Morse cohomology of a fibration, one can compute $H(L_F,E\vert_{L_B},\Lambda^2)$ via a spectral sequence arising from the filtration on base index. Take the filtration 
$$\mathcal{H}^n CF(L,\Lambda^2)=\bigoplus_{n\leq\dim W^+_{X_b}(\pi(x))}\Lambda^2 \langle x\rangle$$
which is equivalent to the filtration by sublevel sets of $b$. Then $\delta_1$ preserves this filtration since configurations involving different critical fibers must project to Morse flowlines. Let $(\widetilde{\E}^{n}_s,d_s)$ be the spectral sequence induced by $\delta_1$ and $\mc{H}$. Since the filtration is bounded, $\widetilde{\E}_s^{n}$ converges to the associated filtered module 
$$\widetilde{\E}_\infty^n\cong gr^\mc{H}_nHF(L_F,E\vert_{L_B},\Lambda^2)$$
(see the Classical Convergence Theorem \cite[Theorem 5.5.1]{weibel}). 

\subsubsection{Relation to Floer cohomology of the fibers} Finally, we give a concrete description of $\widetilde{\E}_2$ in terms of the Floer cohomology of $L_F$ in $F$. On the first page $\widetilde{\E}_1$, we have a differential $d_1$ that is induced from $\mu^1_{vert,mc^0}$ which preserves critical fibers. That is, for $x\in \crit (b)$ we have
\begin{equation*}
\widetilde{\E}_1^{\I (x)}\cong gr^\mc{H}_{\I (x)} CF(L,\Lambda^2)
\end{equation*} equipped with the induced differential 
$$\mu^1_{vert,mc^0}:gr^\mc{H}_{\I (x)} CF(L,\Lambda^2)\rightarrow gr^\mc{H}_{\I (x)} CF(L,\Lambda^2).$$
Let $x^j\in \pi^{-1}(x)\cap\crit (g)=:\crit (g_x)$ and define
$$CF(L_x,\Lambda^2):=\bigoplus_{x^j\in \crit (g_x)} \Lambda^2\langle x^j\rangle.$$
By the Smale condition for $b$, there are no Morse flow lines in $L_B$ between critical points of the same index. Thus, the differential on $\widetilde{\E}_1^{\I (x)}$ splits further as
$$d_1^x:=\mu^1_{vert,mc^0}:CF(L_{F_x},\Lambda^2)\rightarrow CF(L_{F_x},\Lambda^2).$$
Since $\E_1^k\cong gr_k^\mc{F} CF(L,\Lambda^2)$, this shows
\begin{proposition}\label{secondpageEtilde}
$$\widetilde{\E}_2\cong \bigoplus_{x\in \crit(b)} \prod_{k\geq 0}gr_k^\mc{F} H^*(CF(L_x,\Lambda^2),d_1^x).$$
\end{proposition}

\begin{remark}\label{continuationmapremark} By the last sentence in Theorem \ref{mcompact}, we have that $\mu^1_{vert,0}$ squares to $0$. Hence, setting $mc^0=0$ we can identify $H^*(CF(L_x,\Lambda^2),d_1^x)$ with the Lagrangian Floer cohomology of $L_{x}$ in $F_x$ defined with the datum $(J_{F}^x,g_x)$. Denote the later by $HF(L_x,F_x,\Lambda^2)$. Let $\Phi_t$ be a Morse flow in $L_B$ between distinct critical points $x_i$ and $x_j$. We get a path of almost complex structures $J_{F,\Phi_t}$ between $J_F^{x_i}$ and $J_F^{x_j}$, which is a regular path by Theorem \ref{transversality}. Moreover, along such flow lines we allow vertical holomorphic disks. Thus, the differentials $d_s$ on $\widetilde{\E}_s$ with $s\geq2$ are actually continuation maps between $HF(L_x,F_x,\Lambda^2)$ for different $x$'s. This observation was pointed out to the author by Nick Sheridan in conversation.
\end{remark}

Let $mc^0_M$ denote the projection of $mc^0$ onto the summand $CF(L_{x_M},\Lambda^2)$, which it actually lives in $CF(L_{x_M},\Lambda_r)$. Note that Theorems \ref{transversality} and \ref{mcompact} allow us to define the $A_\infty$ algebra for $L_{x_M}$ as
\begin{align*}
\mu^n_{x_M}:&CF(L_{x_M},\Lambda_r)^{\otimes n}\rightarrow CF(L_{x_M},\Lambda_r)\\
\mu^n_{x_M}(x_1\otimes\dots\otimes x_n)&:=\\
&\sum_{\substack{x^j_i, [u]\in\mc{M}_\g(\underline{x},x_i^j,J_{F}^{x_M},X_{TF})\\
 \I(\g,\underline{x},x_i^j)=0}} (-1)^{\lozenge} (\sigma(u)!)^{-1}\mathrm{Hol}_{\rho^0}(u)r^{e_v(u)}\varepsilon(u)x_i^j\nonumber
\end{align*}
where $X_{TF}$ is the projection of the pseudo-gradient perturbation onto $TF$ and the sum is over all unbroken $(J_{F}^{x_M},X_{TF})$-holomorphic configurations that do not leave $F_{x_M}$, with orientations given by the relative spin structure coming from $L$ ($X_{TF}$ depends on the domain of the configuration, but we suppress this). Moreover, the last sentence in Theorem \ref{mcompact} tells us that $(\mu^1_{x_M})^2=0$. When considering a deformation $\mu^1_{x_M,mc}$ by a Maurer-Cartan element, denote the associated cohomology by
$$HF^*(L_{x_M},F_{x_M},\Lambda_r,mc).$$

\begin{lemma}\label{maximalmcsolutionlemma}
The vertical $A_\infty$-algebra can be induced on
$$\mu^n_{vert,mc^0}:gr_0^\mc{H}CF(L,\Lambda_r)^{\otimes n}\rightarrow gr_0^\mc{H}CF(L,\Lambda_r)$$ and agrees with
$$\mu^n_{x_M,mc^0_{M}}:CF(L_{x_M},\Lambda_r)^{\otimes n}\rightarrow CF(L_{x_M},\Lambda_r).$$ 
This also holds if we replace $\Lambda_r$ with $\Lambda^2$.

Moreover, $\mathbf{e}$ is a unit for $\mu^n_{x_M}$, $mc^0_M$ is a Maurer-Cartan solution, and
$$H^*(gr_0^\mc{H}CF(L,\Lambda^2),\mu^1_{vert,mc^0})\cong HF^*(L_{x_M},F_{x_M},\Lambda^2,mc^0_M).$$
\end{lemma}
\begin{proof}
Since trajectories counted by $\mu^n_{vert}$ must project to Morse flow lines that increase $\I (x_i)$, we have that $\mu^n_{vert}$ can be induced on the $0^{th}$ filtered piece. By definition, we have $\mu^n_{x_M}=\mu^n_{vert}$; hence $\mu^n_{x_m,mc^0_M}=\mu^n_{vert,mc^0_M}$.

To see that $\mu^n_{vert,mc^0_M}=\mu^n_{vert,mc^0}$ on the $0^{th}$ filtered piece, one again uses the fact that configurations involving $mc^0-mc^0_M$ have output that is already downstream of $F_{x_M}$. 

Since $\mu_{vert}(mc^0)=W_{vert}\cdot\mathbf{e}$ by Lemma \ref{verticalmcsolutionlemma}, it follows that $\mu_{x_M}(mc^0_M)=W_{vert}\cdot\mathbf{e}$.
\end{proof}

We have our first application of the spectral sequence in Theorem \ref{main theorem}:

\begin{corollary} \label{vanishingonsecondpagecor}
Assume the transition maps of $E\vert_{L_B}\rightarrow L_B$, given by the restriction of transition maps for $E$, take values in $\ham (F,\omega_F)$. If $$HF^*(L_{x_M},F_{x_M},\Lambda(r),mc^0_M)=0,$$ then $\exists N\geq 0$ so that multiplication by $q^N$ and $r^N$ is $0$ in $HF(L,\Lambda^2,mc).$
\end{corollary}
\begin{proof}
Since the transition maps are Hamiltonian diffeomorphisms of the fibers, the parallel transport maps over $L_B$ are Hamiltonian and so $HF^*(L_{x_M},F_{x_M},\Lambda^2,mc^0_M)$ does not depend on the choice of $b$ (and thus $x_M$) from a generic set.

By Lemma \ref{maximalmcsolutionlemma} we have that $\mathbf{e}$ is a unit for $(\mu^n_{x_M})$ and $\mu_{x_M}(mc^0_M)=W_{vert}\cdot \mathbf{e}$.

By Proposition \ref{secondpageEtilde}, Lemma \ref{maximalmcsolutionlemma}, and the vanishing assumption, we have that $\mathbf{e}=0$ on $\widetilde{\E}_2$ when taking coefficients in $\Lambda(r)$. Thus, $\exists N\geq 0$ so that $r^N \cdot \mathbf{e}=0$ on $\widetilde{\E}_2$ (when taken with $\Lambda^2$ coefficients). This relation carries forward to $\widetilde{\E}_\infty^0$, on $\E_2$, and on $HF(L,\Lambda^2)$ since the unit is closed.

Let $N$ be the minimal such integer for $HF(L,\Lambda^2)$. For $Q\in \lambda \bb{Z}_{\geq 0}$ and $\eta\leq(1-\epsilon)Q$ we have
\begin{equation}\label{qrequals0}
q^{Q}r^{N-\eta} \cdot \mathbf{e}=0.
\end{equation}
Thus take $Q\gg 1$ and $\eta$ as above so that $2N>\eta>N$. Then $N>\eta-N>0$ and $r^{\eta-N}\cdot \mathbf{e}\neq 0$. On the other hand,
$$q^Q \mathbf{e}=q^Q r^{N-\eta}\cdot r^{\eta-N} \mathbf{e}=0$$
by \eqref{qrequals0}.

By remark \ref{ainftyproductremark}, $\mathbf{e}$ is a unit for $\mu^2_{mc}$ on $HF(L,\Lambda^2)$, so such an $N$ and $Q$ work for every element.
\end{proof}

In the next section, we will see that the assumptions in Corollary \ref{vanishingonsecondpagecor} imply that $HF(L,\Lambda(t))=0$ where $\Lambda(t)$ is the universal Novikov field.
\subsection{Changing coefficient rings}
In order to provide a link to the invariants in the literature, we define the Floer chain complex of a fibered Lagrangian with coefficients in the universal Novikov ring. Moreover, we given a tangible vanishing result for these invariants based on Corollary \ref{vanishingonsecondpagecor}.

Define the universal Novikov ring $\Lambda_t$ as
$$\Lambda_t:=\left\lbrace \sum_{i} c_{i}t^{\eta_i} \vert c_{i}\in \C,\,\eta_i\in \R_{\geq 0},\, \#\lbrace i:c_{i}\neq 0, \eta_i\leq N\rbrace <\infty  \right\rbrace
$$
for a formal variable $t$, and let $\Lambda(t)$ denote the universal Novikov field where we remove the restriction that $\eta_i\geq 0$.

\begin{definition}
For perturbation data as in Theorem \ref{main theorem}, define the \emph{single variable $A_\infty$-maps} as

\begin{align}\label{a_inftymaps1}
\nu^n&:CF(L,\Lambda_t)^{\otimes n}\rightarrow CF(L,\Lambda_t)[2-n]\\
\nu^n(x_1\otimes\dots\otimes x_n)=&\nonumber \\
&\sum_{\substack{x^j_i, [u]\in\mc{M}_\g(\underline{x},x_i^j,D,P_\g)\\
 \I(\g,\underline{x},x_i^j)=0}} (-1)^{\lozenge} (\sigma(u)!)^{-1}\mathrm{Hol}_\rho(u)t^{e(u)}x_i^j\nonumber
\end{align}
where $\rho\in \hom (\pi_1(L),\Lambda_t^\times)$, the count is over all unbroken, uncrowded, $\pi$-adapted holomorphic configurations, and the other symbols are defined in the same way as \eqref{a_inftymaps}.
\end{definition}
These maps satisfy the $A_\infty$ relations for exactly the same reason as \eqref{a_inftymaps}.

\subsubsection{Relationship with $(\mu^n)$} We have a ring homomorphism
$$\mathfrak{\Lambda}:\Lambda^2\rightarrow \Lambda_t$$
given by
$$\sum_{i,j} c_{ij} q^{\rho_i} r^{\eta_j}\mapsto \sum_{i,j} c_{ij}t^{\rho_i}t^{\eta_j}.$$
This algebra homomorphism is well defined by the definition of $\Lambda^2$: Specifically, the requirement that $$\# \lbrace c_{ij}\neq 0:\rho_i+\eta_j\leq N\rbrace <\infty$$ tells us that $\sum_{i,j\geq 0}c_{ij}t^{\rho_i+\eta_j}$ converges in $\Lambda_t$. Therefore, $\Lambda_t$ is a $\Lambda^2$-module and we get a base change
$$\Lambda^2\otimes_{\Lambda^2} \Lambda_t\cong \Lambda_t.$$
By extending $\mathfrak{\Lambda}$ linearly to a map between $\Lambda^2$-modules $$\underline{\mathfrak{\Lambda}}:CF(L,\Lambda^2)\rightarrow CF(L, \Lambda_t).$$
Moreover, $\underline{\mathfrak{f}}$ defines an $A_\infty$-morphism:
$$\nu^n (\underline{\mathfrak{\Lambda}}x_1\otimes\dots\otimes\underline{\mathfrak{\Lambda}}x_n)=\underline{\mathfrak{\Lambda}}\mu^n(x_1\otimes\dots\otimes x_n).$$ 
It follows that $\underline{\mathfrak{\Lambda}}(mc)$ is a weak Maurer-Cartan solution for $\nu$ with respect to the unit $\underline{\mathfrak{\Lambda}}(\mathbf{e})$, so that $(\nu^1_{\underline{\mathfrak{\Lambda}}(mc)})^2=0$ and $\underline{\mf{\Lambda}}$ is a chain map. Define the Lagrangian Floer cohomology of $L$ with respect to $\nu^1_{\underline{\mathfrak{\Lambda}}(mc)}$ as
$$HF^*(L,\Lambda_t,\underline{\mathfrak{\Lambda}}(mc)):=H^*(CF(L,\Lambda_t),\nu^1_{\underline{\mathfrak{\Lambda}}(mc)}).$$

\begin{corollary} \label{vanishingonsecondpagecorcont}
Under the assumptions of Corollary \ref{vanishingonsecondpagecor}, suppose that $$HF^*(L_{x_M},F_{x_M},\Lambda(r),mc^0_M)=0.$$ Then $HF^*(L,\Lambda(t),\underline{\mathfrak{\Lambda}}(mc))=0$.
\end{corollary}
\begin{proof}
In the preceding discussion, one can replace $\Lambda_t$ with $\Lambda(t)$. In particular, we get a chain isomorphism
\begin{equation}\label{basechangecomplex}
(CF(L,\Lambda^2)\otimes_{\Lambda^2}\Lambda(t),\mu^1_{mc}\otimes \T{Id})\cong (CF(L,\Lambda(t)),\nu^1_{\underline{\mathfrak{\Lambda}}(mc)})
\end{equation}
given by $x\otimes a\mapsto \underline{\mf{\Lambda}}(x)a$. This is a chain map because $\underline{\mf{\Lambda}}$ is. Since $\Lambda(t)$ is a field, we have that $\T{Tor}(HF(L,\Lambda^2),\Lambda(t))=0$. By a version of the Universal Coefficients Theorem, the homology of the left hand side of \eqref{basechangecomplex} is simply $HF(L,\Lambda^2)\otimes_{\Lambda^2}\Lambda(t)$. Hence,
$$HF^*(L,\Lambda^2)\otimes_{\Lambda^2}\Lambda(t)\cong HF^*(L, \Lambda(t)).$$
The result follows from Corollary \ref{vanishingonsecondpagecor}.
\end{proof}

We apply this observation to the finishing the computing Example \ref{flagexample} given in the introduction:

\begin{proof}[Proof of Proposition \ref{vanishingLepsilon}]
By Lemma \ref{maximalmcsolutionlemma}, we have that $mc_{M}^0\in CF^{odd}(L^\epsilon,\Lambda_r)$ is a weak Maurer-Cartan solution for the $A_\infty$ algebra $(\mu^n_{x_M})$ of $L^\epsilon$. Since $L^\epsilon\cong S^3$, we can arrange for our fiber Morse function to have only two critical points, $x^0_M$ resp.~ $x^3_M$ of degree $0$ resp.~ $3$. The algebraic maxima are thus given by $x^\triangledown$, $x^\blacktriangledown$ of degree $0$  and $x^\vee$ of degree $-1$. Since the degree of $mc$ is odd, it follows that 
$$mc^0_M=a(r) x_M^3+b(r)x^\vee$$
where $\T{val}_r(a(r))$, $\T{val}_r(b(r))>0$.

In \cite[Theorem 4.8]{noharaueda}, they compute that
$$\mu^1_{x_M}(x^3_M)=\pm (r^2+r^2)x^0_M$$
on the second page of the Morse-to-Floer spectral sequence for $L^1\subset \mc{O}_{2,0,-2}$, where $2$ is the area of a Maslov index $4$ disk. This is done via the fact that $\pi_2(\mc{O}_{2,0,-2},L^1)\cong \bb{Z}/2$, $L^1$ is homogeneous and a fixed point set of an anti-holomorphic involution, and that the minimal Maslov index is $4$. For $L^\epsilon\subset \mc{O}_{\eta_\epsilon}$, the symplectic form is scaled by $\epsilon$, so the computation becomes
$$\mu^1_{x_M}(x^3_M)=\pm (2r^{2\epsilon})x^0_M.$$

\begin{claim} There are no Morse flows contributing to $\mu^n_{x_M}(x_1\otimes\cdots\otimes x^3_M\otimes\cdots\otimes x_{n-1})$, where $x_i= x^\vee$ or $x^3_M$.
\end{claim}

Indeed, any such configuration with output $x$ must satisfy 
$$0=\dim W^+(x)-3-3(n-k)+(k-1)+n-2=\dim W^+(x)-2n-6+4k$$
by formula \eqref{indexdefinition}, where $k$ is the number of terms with $x_i=x^\vee$. In particular, we must have $x=x^\triangledown$ or $x^\blacktriangledown$. However, Morse configurations must have output that is downstream of all inputs for small perturbations. Since $x^3_M$ is the global minimum, the claim follows.

Hence, the terms 
$\mu^n_{x_M}(mc_M^0\otimes\cdots\otimes x^3_M\otimes\cdots\otimes mc_M^0)$ for $n\geq 2$ involve only configurations with Maslov index at least $4$. Denote the deformed $A_\infty$ algebra of $L^\epsilon$ by $(\mu^n_{x_M,mc^0_M})$. On the second page of the Morse-to-Floer spectral sequence we have that
$$\mu^1_{x_M,mc^0_M}(x^3_M)=\big(\pm 2 r^{2\epsilon}+o(r^{2\epsilon})\big)[x^0_M]$$
where $o(r^{2\epsilon})$ represents terms consisting of strictly higher powers of $r$.
It follows that $[x^0_M]=0\in HF(L^\epsilon,\Lambda(r),mc^0_M)$, so that
$$HF(L^\epsilon,\Lambda(r),mc^0_M)\cong 0.$$
By Corollary \ref{vanishingonsecondpagecorcont}, we have
$$HF^*(L_\epsilon,\Lambda(r),mc)\cong 0.$$
\end{proof}

\subsubsection{Altering the filtration step for $\Lambda_t$}

Following \cite{ohspec}, one can compute the cohomology via a spectral sequence, whose first page is the Morse cohomology. We give a brief review of this and then attempt to give an analogy of Theorem \ref{main theorem} for coefficients in $\Lambda_t$.

By energy quantization, there is a minimal number $e_0\in (0,\infty]$ so that for any $(P_\g)$-holomorphic configuration $u$ with boundary in $L$, we have
$$e_0\leq e(u).$$
We should also require that $e_0\leq\T{val}_t(\underline{\mf{\Lambda}}(mc)).$ One filters the ring by $t$-degree
\begin{equation}\label{fooofiltration}
\mc{G}^k\Lambda_t:= \left\lbrace a\in \Lambda_t \vert \T{val}_t(a)\geq k e_0\right\rbrace.
\end{equation}
and we get an associated filtration of $CF(L,\Lambda_t)$. The differential $\nu^1_{\underline{\mf{\Lambda}}(mc)}$ preserves the filtration. Denote the induced spectral sequence $\mathcal{B}_s^k$. The convergence of this spectral sequence is another application of the Complete Convergence Theorem \cite{weibel} as in the proof of Theorem \ref{main theorem}. However in this case completeness follows easily, and stabilization is shown by \cite[Proposition 6.3.9]{fooo} to which Proposition \ref{zeta} is analogous. We have that
\begin{equation}\label{mtof}\mathcal{B}^k_\infty\cong gr_k^\mc{I}HF^*(L,\Lambda_t,\underline{\mathfrak{\Lambda}}(mc)).
\end{equation}
By the choice of $e_0$, we have that $$\mathcal{B}_2^k\cong gr_k^\mc{I}H^{Morse}(L,\Lambda_t).$$
We can change $e_0$ to provide a ``second page approximation" to Theorem \ref{main theorem} for $\Lambda_t$, although we note that it involves some additional assumptions. 
\begin{proposition}\label{usualring}
Assume that $\val{mc}>0$ and $\mf{\Lambda}(\sigma^0(\rho))\in\hom (\pi_1(L),\bb{C}^\times)$. Assume that $L_F\subset F$ is a monotone Lagrangian. Under the conditions of Theorem \ref{main theorem}, for $K$ large enough in the weak coupling form, and for appropriate modifications of $mc$ and $\rho$ after scaling $K$, there is a spectral sequence $\mathcal{B}^k_s$ that converges to the associated filtered module of $HF^*(L, \Lambda_t,\underline{\mf{\Lambda}}(mc))$ and whose second page is family Floer cohomology with coefficients in $\Lambda_t$:
\begin{equation}
\mathcal{B}_2^k = gr_k^\mc{G}HF^*(L_F,E\vert_{L_B},\Lambda_t).
\end{equation}
\end{proposition}
\begin{proof}
For a particular Morse function $g$ on $L_F$, let $\Sigma_{max,F,g}$ be an upper bound on the energy of disks appearing in the Floer differential for $(F,L_F)$; note that this is finite since $L_F$ is monotone and compact.

For $\epsilon\ll 1$ and $K$ large enough, the coupling form $a+(1-\epsilon)K\pi^*\omega_B$ is positive definite, so we get the inequality
\begin{equation}\label{baseinequality}
e(u)\geq \epsilon \cdot e(\pi\circ u)
\end{equation}
on holomorphic curves with $\pi\circ u$ non-constant (note that the number $K$ is built into both sides).

Scaling $K$ by $a$ also scales $\lambda_0$ by $a\lambda_0$, where $$K\omega_B:H_2(B,L_B,\bb{Z})\rightarrow \lambda_0\cdot\bb{Z}$$
is the base energy homomorphism. Given a Maurer-Cartan solution $mc$, scaling $K$ by $a$ also gives a new solution $mc^a$, where $\val{mc^a}=a\val{mc}$ (as well as with any higher powers of $q$). One can also apply this reasoning to the rank one local system $\rho$. Taken together, scaling $K$ by $a$ scales $\lambda$ to $a\lambda$. Moreover, we have $$\T{val}_t\underline{\mf{\Lambda}}(mc^a)\geq a\val{mc}+(\epsilon-1)\val{mc}\geq a\epsilon\val{mc}\geq a\epsilon\cdot \lambda.$$

Choose $K$ large enough so that 
$$\epsilon \lambda>\Sigma_{max,F}$$
and \eqref{baseinequality} holds. Thus, $$e(u)> \Sigma_{max,F}$$ whenever $\pi\circ u$ is not constant. Choose $e_0$ so that 
$$\epsilon \lambda\geq e_0>\Sigma_{max,F}.$$

The second page of the spectral sequence induced by the filtration \eqref{fooofiltration} with step size $e_0$ is the filtered cohomology of the complex  
\begin{equation*}
(CF(L,\Lambda_t),\delta_1)
\end{equation*}
where $\delta_1$ is exactly as in \eqref{familydifferential} with $r$ replaced with $t$. This is the definition of $HF^*(L_F,E\vert_{L_B},\Lambda_t)$.
\end{proof}

\section{Invariance of datum}\label{invariancesection}
One wants to know that the Floer cohomology defined before Theorem \ref{main theorem} is independent of choices of fibered perturbation datum and Maurer-Cartan solution. While a Hamiltonian isotopy of $L$ may destroy the fibration structure, we want to know if our definition of Floer cohomology for a special family of almost complex structures coincides with any standard definition.

\subsubsection{A review of invariance in the base}
We sketch the the proof of invariance in Charest-Woodward \cite[Chapter 5]{CW2} for rational $L_B\subset B$:

For two divisors $D^0$ and $D^1$ of the same degree and two stabilizing perturbation data $\mathcal{P}^0$ and $\mathcal{P}^1$, one defines a theory of \emph{quilted}-$\mathcal{P}^{01}$-holomorphic treed disks that are $\mathcal{P}^0$ resp. $\mathcal{P}^1$ at the root resp. leaves and are $\mathcal{P}^{01}_t$-holomorphic in between for some some path between $\mathcal{P}^0$ and $\mathcal{P}^1$. One can also do this for divisors built from different line bundles. The full result, paraphrased from \cite[Theorem 5.12]{CW2}, is:
\begin{theorem} For any admissible perturbation systems $\mathcal{P}^0$ and $\mathcal{P}^1$ and stabilizing divisors $D^0$, $D^1$, there are unital perturbation morphisms $\mc{P}^{ij}:CF(L,\mc{P}^i,D^i)\rightarrow CF(L,\mc{P}^j,D^j)$ $i,j=0,1$ that are homotopy inverses.
\end{theorem}
Here is a summary of the argument: Pick a time parameterization for each quilted type that takes $0$ on the root, $1$ on the leaves, and only depends on the edge distance from the single quilted component. We assume first that the two divisors we pick are built from homotopic sections of the same line bundle. Given an energy $e$, Lemma \ref{establetheorem} guarantees the existence of a path (or even an open dense set) of a.c structures $J_{D^t}$ such that $D^t$ contains no $J_{D^t}$-holomorphic spheres. We then take a time dependent perturbation system $\mathcal{P}^{01}_t$ that takes values in the open, dense set guaranteed by lemma \ref{establetheorem} and is equal $J_{D^t}$ on the thin part of the domain. Then, transversality and compactness follow for quilted $\mc{P}^{01}_t$ treed disks, and we can define a \emph{perturbation morphism} $P^{01}$ from $\mathcal{P}^0$ to $\mathcal{P}^1$ on products by taking the isolated $\mathcal{P}^{01}_t$ trajectories. This, in turn defines an $A_\infty$-morphism between the $A_\infty$-algebras $CF(L,\mathcal{P}^0,D^0)$ and $CF(L,\mathcal{P}^1,D^1)$. To show that the composition of the two perturbation morphisms $\mc{P}^{10}\circ \mc{P}^{01}$ is homotopic to the identity, one develops a similar theory with \emph{twice-quilted} treed disks.

When the divisors $D^0$ and $D^1$ are not of the same degree, one follows \cite[Theorem 8.1]{CM} (cf \cite[Chapter 5.6]{CW2}) to find divisors $D^{0'}$ and $D^{1'}$ built from homotopic sections of the same line bundle that are $\epsilon$-transverse to $D^0$ resp.~$D^1$. One uses a theory of holomorphic configurations that are adapted to both $D^{0}$ and $D^{0'}$ to get a perturbation morphism $\mc{P}^{i}:CF(L,\mc{P}^i,D^i)\rightarrow CF(L,\mc{P}^i,D^{i'})$ as in the proof of \cite[Theorem 5.12]{CW2}. Finally, we apply the previous argument to the data adapted to the divisors $D^{0'}$ and $D^{1'}$ to get a perturbation morphism with homotopy inverse between between $A_\infty$-algebras defined with these divisors. If we combine all of the morphisms, the net result are perturbation morphisms between $D^0$ and $D^1$ that are homotopy inverses.

\subsubsection{The fibered situation}
The divisor $\pi^{-1}(D_B)$ used in our Floer theory is not stabilizing, but the Floer theory is well behaved with respect to $\pi^{-1}(D_B)$ as sections \eqref{transection} and \eqref{comp} show. We follow the same quilted construction as in \cite[Chapter 5]{CW2} to sketch invariance of divisor $\pi^{-1}(D_B)$ and fibered perturbation system. For two divisors $D_B^0$ and $D_B^1$ built from homotopic sections of a line bundle and two fibered perturbation datum $\mathcal{P}^0$ resp. $\mathcal{P}^1$ for types with $\pi$-stable pieces, one can choose a path of coherent fibered perturbation data $\mathcal{P}^{01}_t$ by taking some pullback of a path of perturbation data from the base. One can then define a theory $\mathcal{P}^{01}$-holomorphic $\pi$-stable types that are quilted on their $\pi$-stabilizations, and are $\pi$-adapted to $\pi^{-1}(D_B^0)$ near the root and to $\pi^{-1}(D_B^1)$ near the leaves. This theory defines an $A_\infty$-perturbation morphism $\mc{P}^{01}$ where one checks that $\mc{P}^{01}\circ \mc{P}^{10}$ is homotopic to the identity via $\pi$-adapted configurations that are twice-quilted on their $\pi$-stabilizations.

When the divisors $D_B^i$ are of different degrees, the story is the same as in the rational case. Given a divisor of high degrees $D_B^{i'}$ that intersects $D_B^{i}$ $\epsilon$-transversely, the divisors $\pi^{-1}(D_B^i)$ and $\pi^{-1}(D_B^{i'})$ also intersect $\epsilon$-transversely. Therefore, one uses a quilted theory of types that are $\pi$-adapted to both $\pi^{-1}(D_B^0)$ and $\pi^{-1}(D_B^{0'})$ on one side of the quilt and $\pi$-adapted to $\pi^{-1}(D_B^1)$ and $\pi^{-1}(D_B^{1'})$ on the other side to get a fibered perturbation morphism $\mc{P}^{00',11'}$ (and a homotopy inverse). The argument from the previous paragraph provides a fibered perturbation morphism $P^{0'1'}$ with homotopy inverse $P^{1'0'}$ between data for divisors $\pi^{-1}(D_B^{0'})$ and $\pi^{-1}(D_B^{1'})$. The $A_\infty$-morphism $P^{1'0'}\circ P^{00'11'}$ provides the homotopy between $\pi^{-1}(D_B^0)$ and $\pi^{-1}(D_B^1)$ with homotopy inverse $P^{11'00'}\circ P^{0'1'}$. To show that the composition is $A_\infty$-homotopic to the identity one again uses $\pi$-adapted configurations that are twice-quilted on their $\pi$-stabilizations.

\subsubsection{Agreement with rational case}
In case the pair $(E,L)$ is rational, we have a definition of the $A_\infty$ maps \eqref{a_inftymaps1} and we would like to see that our definition of is homotopy equivalent to that of the $A_\infty$ maps defined in \cite{CW2}. We sketch the idea:

The divisor $\pi^{-1}(D_B)$ is not stabilizing for $L$, but as in \cite[Theorem 8.1]{CM} we take a Hermitian line bundle $X\rightarrow E$ with a unitary connection of curvature $ i\omega$, and construct a section $s_k$ of $X^{\otimes k}$ whose intersection with the zero section is $\epsilon$-transverse to $\pi^{-1}(D_B)$. The set $D:=s_k^{-1}(0)$ is a smooth codimension $2$ submanifold such that
\begin{enumerate}
\item $[D]^{PD}=k[\omega_{H,K}]$ for large $k$,
\item $D$ is $\epsilon$-transverse to $\pi^{-1}(D_B)$, and
\item by an extension of Lemma \ref{establetheorem} and $\epsilon$-transversality, there is a perturbation datum $(P_\g')$ that is arbitrarily close to $(P_\g)$, agrees with $(P_\g)$ on $\pi^{-1}(D_B)$, and makes $D$ into a stabilizing divisor for $L$.\end{enumerate}

One then constructs a theory of quilted $(P_\g')$-holomorphic configurations that are $\pi$-adapted to $D_B$ up to the quilted component and adapted to $D$ from the quilted component and onward. The compactness and transversality of these types holds, and we get a perturbation morphism $\mc{P}^{01}:CF(L,P_\g,\pi^{-1}(D_B))\rightarrow CF(L,P_\g',D)$.  Finally, we use twice quilted disks to show that composition with the reverse map $P^{10}:CF(L,P_\g',D)\rightarrow CF(L,P_\g,\pi^{-1}(D_B))$ is $A_\infty$-homotopic to the identity.

\printbibliography

\end{document}